\documentclass{article}[a4paper,8pt,twosided]
\usepackage[utf8]{inputenc}
\usepackage[T1]{fontenc}
\usepackage{times}
\usepackage{fullpage}
\usepackage{amsmath,amsfonts,amssymb,amsthm,psfrag,wrapfig,float}
\usepackage{authblk}
\usepackage{graphicx}
\usepackage[font=small,labelfont=bf]{caption}
\usepackage{bbm}
\usepackage{color}
\usepackage[colorlinks=true,
			linkcolor={blue},
			citecolor={blue},
			urlcolor={blue}]{hyperref}
\usepackage[table]{xcolor}
\usepackage{euscript,graphicx}
\usepackage{float}
\usepackage{subfigure}
\usepackage{xspace}
\usepackage{booktabs}

\newcommand{\bRb}{\mathbb{R}}

\newtheorem{thm}{Theorem}[section]
\newtheorem{prop}[thm]{Proposition}

\newtheorem{lem}[thm]{Lemma}
\newtheorem{rem}[thm]{Remark}

\newtheorem{defn}[thm]{Definition}

\newcommand{\cGc}{\mathcal{G}}

\newcommand{\gaga}{\left|\left|}
\newcommand{\drdr}{\right|\right|}

\newcommand{\bal}{\begin{align}}
\newcommand{\eal}{\end{align}}
\newcommand{\beq}{\begin{equation}}
\newcommand{\eeq}{\end{equation}}

\newcommand{\ba}{\begin{align*}}
\newcommand{\be}{\begin{equation*}}
\newcommand{\ee}{\end{equation*}}

\newcommand{\EE}{\mathbb{E}}
\newcommand{\PP}{\mathbb{P}}

\newcommand{\h}{{h}} 

 \title{A Kramers' type law for the first collision-time of two self-stabilizing diffusions and of their particle approximations}
\author[1]{Jean-Fran\c{c}ois Jabir}
\author[2]{Julian Tugaut}
\affil[1]{HSE University, Department of Statistics and Data Analysis $\&$ Laboratory of Stochastic Analysis and its Applications, Moscow, Russia. jjabir@hse.ru}
\affil[2]{Universit\'e Jean Monnet, Saint-Etienne, $\&$ Institut Camille Jordan, Lyon,
France. julian.tugaut@univ-st-etienne.fr}

\begin{document}
\maketitle
\pagestyle{empty}
\setlength{\textwidth}{14cm}
\begin{abstract}
The present work investigates the asymptotic behaviors, at the zero-noise limit, of the first collision-time and first collision-location related to a pair of self-stabilizing diffusions and of their related particle approximations. These asymptotic are considered in a peculiar framework {\color{black}where} diffusions evolve in a double-wells landscape 
{\color{black}where} collisions manifest due to the combined action of the Brownian motions driving each diffusion and the action of a self-stabilizing kernel.
As the Brownian effects vanish, we show that first collision-times grow at an explicit exponential rate and that the related collision-locations persist at a special point in space.
These results are mainly obtained by linking collision phenomena for diffusion processes with exit-time problems of random perturbed dynamical systems, and by exploiting Freidlin-Wentzell's LDP approach to solve these exit-time problems. Importantly, we consider two distinctive situations: the one-dimensional case (where true collisions can be directly studied) and the general multidimensional case (where collisions are required to be enlarged).
\end{abstract}
\medskip

{\bf Key words:} {\color{black}Noise-induced collisions; Asymptotic of McKean-Vlasov diffusion at small-noise limit; Freidlin-Wentzell theory of Gaussian perturbed dynamical systems.}
\par\medskip

{\bf 2020 AMS subject classifications:} Primary: 60H10.  
Secondary: 60J60,  60K35,  37A50.  
\par\medskip

\section{Introduction}

\subsection{Setting}
In this paper, we are interested in estimating the zero-noise limit of the first collision-time and first collision-location (or an $\epsilon$-approximation of these quantities) of two nonlinear self-stabilizing diffusions, $X=(X_t)_{t\ge 0}$ and~$Y=(Y_t)_{t\ge 0}$, whose dynamics are given by:
\begin{subequations}\label{MV}
\begin{equation}
\label{MV1}
\left\{
\begin{aligned}
&X_t=x_1+\sigma B_t-\int_0^t\Big(\nabla V\left(X_s\right)+\int \nabla F\left(X_s-x\right)\,\mu^X(s,dx)\Big)ds\,,\\
&\mu^X(t)=\text{Law}(X_t)\,,\,t\ge 0\,,
\end{aligned}
\right.
\end{equation}
and
\begin{equation}
\label{MV2}
\left\{
\begin{aligned}
&Y_t=x_2+\sigma \widetilde{B}_t-\int_0^t\Big(\nabla V\left(Y_s\right)+\int \nabla F\left(Y_s-y\right)\,\mu^Y(s,dy)\Big)\,ds\,,\\
&\mu^Y(t)=\text{Law}(Y_t)\,,\,t\ge 0\,.
\end{aligned}
\right.
\end{equation}
\end{subequations}
Here and after, $x_1$ and $x_2$ feature two deterministic initial conditions,  $\sigma$ a positive constant and $B$ and $\widetilde{B}$ denote two independent $\mathbb R^d$-Brownian motions. The derive functions in \eqref{MV1} and~\eqref{MV2} are characterized by the potentials functions $V$ and $F$ which, in addition to be smooth, will be assumed to generate,  on the one hand, a bistable landscape and, on the second hand, a stabilization effect which settles down each dynamic in a given steady region (our exact setting is detailed in Assumptions ${\bf (A)}$ below).
 
Self-stabilizing diffusions define particular instances of McKean-Vlasov models with contractive nonlinear coefficients. The latter, historically introduced in \cite{McKean-66}, \cite{McKean-67}, broadly refer to a class of SDEs where coefficients depend on the distribution itself of the solution to the equation. McKean-Vlasov models arise with the probabilistic interpretation of nonlinear PDEs and as the mean-field - or large population - limit of interacting stochastic particle systems; we refer the interested reader to \cite{Bossy-03}, \cite{JabinWang-17}, \cite{Chaintron-Diez-21} for exhaustive surveys on these topics. For their parts, the dynamics \eqref{MV1} and \eqref{MV2} emerge as the natural large population limit ($N\uparrow \infty$) of the family of exchangeable interacting particle systems~$(X^{1,N},\cdots,X^{N,N})$ and $(Y^{1,N},\cdots,Y^{N,N})$, given by: 
 \begin{subequations}\label{particles}
\begin{equation}
\label{Ibis}
\left\{
\begin{aligned}
&X_t^{i,N}=x_1+\sigma B_t^i-\int_0^t\Big(\nabla V\left(X_s^{i,N}\right)+\frac{1}{N}\sum_{j=1}^N\nabla F\big(X_s^{i,N}-X_s^{j,N}\big)\Big)ds\,,\\
&\,t\ge 0\,,1\le i\le N\,,
\end{aligned}
\right.
\end{equation}
and
\begin{equation}
\label{IIbis}
\left\{
\begin{aligned}
&Y_t^{i,N}=x_2+\sigma\widetilde{B}_t^i-\int_0^t\Big(\nabla V(Y_s^{i,N})+\frac{1}{N}\sum_{j=1}^N\nabla F\big(Y_s^{i,N}-Y_s^{j,N})\Big)ds\,,\\
&t\ge 0\,,1\le i\le N\,,
\end{aligned}
\right.
\end{equation}
\end{subequations}
the driving noises $(B^1,\cdots,B^N)$ and $(\widetilde{B}^1,\cdots,\widetilde{B}^N)$ denoting here mutually independent copies of $B$ and $\widetilde{B}$.  
 
 The motions of the self-stabilizing diffusions \eqref{MV1} and \eqref{MV2} are governed by three mechanisms: the diffusive effect of the Brownian motions whose intensities are parameterized by~$\sigma$; the action of the external potential force $-\nabla V$; and the action of an internal potential force~$-\nabla F$, characterizing at the meso-scopic scale, the interactions driving \eqref{Ibis} and \eqref{IIbis}.  
 In the absence of an internal force, the diffusions correspond to stochastic gradient flows whose long-time behaviors, for a non-trivial potential {\color{black}$V$} with suitably growth, are governed by the Gibbs measure $(R_\sigma)^{-1}\exp\{- 2V/{\sigma^2} \}$~-~where~$R_\sigma$ is standing for a renormalizing constant~-~regardless of the convexity of $V$ and the initial states. On the other hand, in the absence of an external potential, the force field $-\nabla F$ may induce a long-time stabilization effect on the Brownian diffusion towards an invariant probability measure depending only on the first initial moment of the diffusion process (see~\cite{BRTV}, \cite{BRV}).  Combined, the potentials can create a discrepancy generating multiple invariant probability measures. A prototypical example where this situation occurs is given by the one-dimensional double-wells potential~$V(x):=x^4/4-x^2/2$ and the mean-attracting force generated by $F(x):=\alpha x^2/2$ and $\alpha>0$ (we refer the interested reader to the seminal papers~\cite{Kramers-40} and ~\cite{Dawson-83}, and the references therein, for the practical and theoretical interests of these potentials).
 The resulting model illustrates the situation where different stationary regimes emerge depending on the parameters $\alpha$ and $\sigma$. The wells $\lambda_1=-1$ and $\lambda_2=1$ and the ``bump'' $\lambda_0=0$ characterize the three possible attractive points for the dynamics which dominate long-time asymptotic as $\sigma\downarrow 0$. Whenever~$\sigma$ is larger than a certain threshold, convergence to the unique invariant measure is ensured while, whenever~$\sigma$ is below this threshold, three invariant probability measures emerge, two of them being concentrated around $\lambda_1$ and $\lambda_2$ (\cite[Sections 3 and~4]{Dawson-83}). In the case where $\alpha$ is itself large enough (a case that we will below refer to as \textit{synchronization}), long-time behavior can be analyzed at very small-noise intensity.   
  
  The question of the long-time behavior of self-stabilizing diffusions along establishing explicitly the asymptotic ``large population, large time'' of the related particle systems,  
  has been intensively investigated under prior assumptions ensuring uniqueness of the invariant measure, see e.g. \cite{BCCP}, \cite{Malrieu2001}, \cite{BGG2}, \cite{CMV2003}, \cite{CGM}, \cite{BoGuiMa-10}. In the case where different invariant measures exist, the long-time convergence of self-stabilizing diffusions has been addressed in e.g. \cite{Tamura1984}, \cite{AOP}, \cite{DuongTugaut2018}. 
  
\vspace{0.5cm}
  
\indent
From here on, the pairs \eqref{MV1}-\eqref{MV2} and \eqref{Ibis}-\eqref{IIbis} will be all subject to the following assumptions:
 
\noindent
$\mathbf{(A)}-(i)$ $V:\mathbb{R}^d\rightarrow \mathbb R$ is of class $\mathcal C^2$, uniformly convex at infinity, and such that $\nabla V$ is locally Lipschitz continuous and grows at most at a $2n$-polynomial rate. Namely, $\nabla V$ satisfies the following properties: for some threshold $R'>0$, the matrix $\inf_{||x||\ge R'}\nabla^2 V(x)$ is positive definite;
\[
%K_R:=
\forall R>0,\:\:\sup_{\max(||x||,||y||)<R}\frac{||\nabla V(x)-\nabla V(y)||}{||x-y||}<\infty,
\]
and
\[
\sup_{x\in\mathbb R^d}\left\{(1+||x||^{2n})^{-1}||\nabla V(x)||\right\}<\infty\,,
\]
$||\cdot||$ denoting the Euclidean norm.\\
\noindent
$(\mathbf{A})-(ii)$ $V$ admits \emph{exactly} two distinct (strict) local minima located at the points $\lambda_1$ and~$\lambda_2$.

\noindent
$(\mathbf{A})-(iii)$ $F(x):=\frac{\alpha}{2}||x||^2$ with $\alpha>-\theta$ 
 for $\theta:=\inf_{x \in \bRb^d}\inf_{\xi \in \bRb^d\,:\,||\xi||=1}\big(\xi\nabla^2V(x)\xi \big)$.

\noindent
$(\mathbf{A})-(iv)$ Each initial condition $x_1$ and $x_2$ lies in a distinctive basin of attraction of $V$, that is: for $i=1$ or $2$, $x_i$ belongs to the set $\mathcal{G}(\lambda_i)$ defined by
\[
\mathcal{G}(\lambda_i):=\left\{z\in\mathbb R^d\,:\,\,\lambda_i=\lim_{t\rightarrow \infty}\phi_t(z)\,\text{ for }\,\phi_t(z)=z-\int_0^t\nabla V(\phi_s(z))\,ds\right\}\,.
\] 
Under Assumptions $(\mathbf{A})-(i)$ to $(\mathbf{A})-(iii)$, Equations~\eqref{MV1} and \eqref{MV2} are well-posed, with uniqueness holding in a path-wise sense, and the solutions $X$ and $Y$ each {\color{black}has} uniform-in-time finite moments of all orders (see \cite[Theorem 2.13]{HIP} for the precise statement{\color{black}s} and demonstrations of these results).  The same implication can be stated for \eqref{Ibis} and \eqref{IIbis} with: for all~$p>0$,
\[
\sup_{t,N}\mathbb E\left\{||X_t^{i,N}||^{2p}+||Y_t^{i,N}||^{2p}\right\}
<\infty\,.
\]
The condition $(\mathbf{A})-(iii)$ corresponds to the simplest form of synchronized regime and has been purposely chosen to simplify some proof arguments later on. (Possible extensions of our working assumptions, notably $(\mathbf{A})-(iii)$, will be presented at the end of this section.) In view of $(\mathbf{A})-(i)$, the self-stabilizing force compensates the lack of global convexity of the potential $V$, and the resulting \emph{effective potential} of the system,  
 $x\mapsto V(x)+\int F(x-z)\mu(dz) =V(x)+\frac{\alpha}{2}\int ||x-y||^2\mu(dy)$
  is uniformly 
   convex on $\mathbb R^d$, independently of the measure argument $\mu$.  
 
Without further assumptions and outside the one-dimensional case (which will be treated separately from the general $d$-dimensional case), tracking the first time $t$ where $X$ and $Y$ will collide is by nature ill-posed.  Indeed, in the case $d\ge 2$, as $B$ and $\tilde B$ almost surely  do not collide at finite time, under the assumptions $(\mathbf A)-(i)$ and  $(\mathbf A)-(iii)$ - and up to a change of probability measure - the same can be stated for the pairs $(X,Y)$ and $(X^{i,N},Y^{i,N})$. Properly, the first collision-time between \eqref{MV1} and \eqref{MV2} is obtained by the $\varepsilon\downarrow 0$-limit of the family of stopping times:
\begin{equation}
\label{berlin-bogota}
C_\varepsilon(\sigma):=\inf\left\{t\ge 0\,:\,||X_t-Y_t||\le 2\varepsilon\right\}\,,
\end{equation}
while the first collision-location is characterized by the $\varepsilon\downarrow 0$-limit of $(X_{C_\varepsilon(\sigma)},Y_{C_\varepsilon(\sigma)})$ in some region of the space. (Below, this limit will be often refer to as the {\it persistence} of the first collision-location.) 
Considering the continuously diffusive nature of $X$ and $Y$ for arbitrary $\sigma>0$, approximating true-collisions into $\epsilon$-collisions is rather natural. Equivalently, this approximation amounts to widening the point-materials $(X_t,Y_t)$ into a pair of moving permeable balls, with center of mass located in $X_t$ and $Y_t$ at each time $t$, and with a specified radius $\epsilon$ defining the radius of collision between the two bodies.
To avoid any trivial situation, the collision radius $\varepsilon$ has to be taken smaller than the smallest distance between the zero-noise limit $(\phi(x_1),\phi(x_2))$ of $(X,Y)$; that is
\begin{equation}\label{genthreshold}
\varepsilon_0:=
2^{-1}\inf_{t\ge 0}\{||\phi_t(x_1)-\phi_t(x_2) ||\}\,.
\end{equation}

The analog for \eqref{Ibis}-\eqref{IIbis} is characterized by the family of hitting times,
\begin{equation}
\label{berlin-bogota2}
C^i_{\varepsilon,N}(\sigma):=\inf\left\{t\ge 0\,:\,||X_t^{i,N}-Y_t^{i,N}||\le 2\varepsilon\right\}\,,\,1\leq i\le N\,.
\end{equation}

The assumptions $(\mathbf{A})-(i)$ to $(\mathbf{A})-(iv)$ are purposely set to generate a specific regime where collisions result from the sole and combined actions of the driving Brownians. As these actions vanish with $\sigma$, collision-time necessarily grows at a certain rate while the associated collision-location may remain in a balanced region between the wells $\lambda_1$ and $\lambda_2$. Both phenomena are intuitively determined by the parameters $\alpha$, $\sigma$, the radius $\varepsilon$ and the depth of the wells where lie the attractors $\lambda_1$ and $\lambda_2$. 

 More specifically, the {\color{black}combination} of $(\mathbf{A})-(i)$, $(\mathbf{A})-(ii)$ and $(\mathbf{A})-(iv)$ {\color{black}ensures} a bistable regime where all diffusions evolve in a landscape where $\lambda_1$ and $\lambda_2$ define two separate attractors, one for $X$ and the other one for $Y$. As $\sigma\downarrow 0$, all sources of randomness disappear and the paths of \eqref{MV1} and \eqref{MV2} naturally simplify into two gradient flows $\phi(x_1)$ and~$\phi(x_2)$ solutions to
 \[
 \frac{d\phi_t(x_k)}{dt}=-\nabla V(\phi_t(x_k)),\quad \phi_0(x_k)=x_k,\qquad k=1,2\,.
 \] 
  Due to $(\mathbf{A})-(iv)$, $\lim_{t\rightarrow\infty}\phi_t(x_k)=\lambda_k$ and, with $(\mathbf{A})-(ii)$, $\phi(x_1)$ and $\phi(x_2)$ are collision-free. In the same way, each moving dynamical couple of balls with a center of mass located at $\phi_t(x_1)$ and $\phi_t(x_2)$ are collision-free at all time $t$. Finite time $\varepsilon$-collisions so occur as long as the diffusive effects of $B$ and $\tilde B$ remain. As time goes by and as $\sigma$ vanishes, the  potential force prevails and, for a collision to happen, the driving Brownians have to force each diffusion $X$ and $Y$ to overcome their potential barrier. As such, and in view of $(\mathbf{A})-(ii)$ and $(\mathbf{A})-(iv)$,  the asymptotic $\lim_{\sigma\rightarrow 0}(X_{C_\varepsilon(\sigma)},Y_{C_\varepsilon(\sigma)})$ should remain in a region surrounding $\lambda_1$ and $\lambda_2$, and whose "width" depends on $\epsilon$.  This phenomenon is also expected for \eqref{Ibis} and \eqref{IIbis}. Indeed, as  $\sigma\downarrow 0$, the %whole 
  particle system{\color{black}s} $(X^{i,N},Y^{i,N})$, $1\le i\le N$, {\color{black}converge} to the family of dynamical systems $(\phi^i(x_1),\phi^i(x_2))$, $1\le i\le N$, satisfying
 \[
 \frac{d\phi_t^i(x_k)}{dt}=-\nabla V(\phi_t^i(x_k))-\alpha\Big(\phi_t^i(x_k)-\frac 1{N}
 \sum_{j=1}^N \phi_t^j(x_k)\Big),\, \phi_0^i(x_k)=x_k,\,1\le i\le N,\, k=1,2\,.
 \]
As $(\mathbf{A})-(i)$ ensures that $\nabla V$ is locally Lipschitz continuous and as all the flows $\phi^{j}(x_k)$ start at $x_k$, a uniqueness argument yields that each $\phi^{i}(x_k)$ simply corresponds to $\phi(x_k)$.  

Our interest for the present study has been initially motivated with the modeling and the analysis of collisions induced by a vanishing random perturbation in swarming interacting multi-agent systems, notably in Cucker-Smale models. 
Introduced in \cite{CS07a}, \cite{CS07b},
these models broadly define a class of second order high dimensional systems representing, at each time, the position and the velocity 
of a finite population of individuals of the same specie. Albeit initially dispersed, individuals, through their interactions, adopt a common behavior over time. Since their introduction,  Cucker-Smale models and their connections with statistical physics have been extensively studied and adapted to wider situations in  social science and in economy (see e.g. \cite{NaldiPareTosc-10}, \cite{PareTosc-14}). The introduction of stochastic perturbation has also been considered where a variety of collective behaviors can be found depending on how the noise acts in the dynamic (additively or multiplicatively; privately or commonly; ...); see \cite{Pedeches2017}, \cite{CDL} and the references therein. In parallel to the impact of the noise in flocking models, another extension which motivated our setting is the introduction of leaders (see again \cite{PareTosc-14}) in the models, which influence the emergence of distinct agglomeration over distinct steady regions. Having this modeling perspective in mind, the particle systems~\eqref{Ibis} and \eqref{IIbis} - and their large population limit \eqref{MV1} and \eqref{MV2} - can be viewed as describing only the positions of two groups of bodies (representing e.g.  economical agents, animal populations or cells) where each group evolves independently from the other and is attracted to a specific source (e.g.  economical objectives, nutriment sources, or chemical attractants). Placed in a random environment where each individual is affected by an idiosyncratic noise, the two entities are forced to collide -  $\varepsilon$ being understood as a range of influence. In this framework, we specifically address the question of how characteristics of the first collision (time and location) behave as the source of the collision elapses.   
   
Compared to our original interest, the models \eqref{MV1}-\eqref{MV2} and \eqref{Ibis}-\eqref{IIbis} only provide a simplified version of our cases of interest: Langevin dynamics are eased into their over-damped~-~or Kramers-Smoluchowski - limits and all possible interaction, notably any possible post-collision effect, between the two self-stabilizing diffusions - or the two families of particles~-~are neglected.
Concretely, the extension of our main results to the framework of Cucker-Smale models - or even general second order dynamics - currently out-scopes the applicative and technical range of the present paper. This gap is inherent to the fact that LDP for Langevin dynamics have been scarcely
 addressed in the literature. This gap is further significant with  Freidlin-Wentzell's theory on exit-time problems - which carries the essence of this paper. Enabling to fill this gap would require to revisit Freidlin-Wentzell's theory and further thoroughly revisit the literature addressing this theory in the case of self-stabilizing diffusions.
 Despite this deviation from our initial intent, the present framework and the methodology developed later in the paper set a solid theoretical basis, solid enough to carry our scientific program in the near future. 
  
 \subsection{First collision-times viewed as first exit-times} 
  
 As it will become clear in a few lines, and following the terminology introduced in \cite{HIP}, the questions of estimating how fast the first collision-times between $(X,Y)$ and $(X^{i,N},Y^{i,N})$ grow and whether~-~and where - the first collision-locations $(X_{C_\varepsilon(\sigma)},Y_{C_\varepsilon(\sigma)})$ and $(X^{i,N}_{C^i_{\varepsilon,N}(\sigma)},Y^{i,N}_{C^i_{\varepsilon,N}(\sigma)})$ may persist amount to establishing a Kramers' type law for the systems \eqref{MV1}-\eqref{MV2} and \eqref{Ibis}-\eqref{IIbis}. By definition,~$C_\varepsilon(\sigma)$ can be alternatively viewed as the first time when the diffusion $(X,Y)$ enters the domain $\triangle_\varepsilon:=\{(x,y)\in\mathbb R^d\,:\,||x-y||\le 2\varepsilon\}$ or equivalently the first time when $(X,Y)$ leaves~$\mathbb R^{2d}\setminus \triangle_\varepsilon$.
 With this view, the natural framework to study our question is the one of the large deviations, and more precisely the Freidlin-Wentzell theory on the exit-problem for the perturbed dynamical systems out of stable sets (see the definition below). While we refer the reader to \cite[Chapters 3 and 4]{FW98} - and \cite[{\color{black}Sections 5.6, 5.7}]{DZ} - for a detailed introduction on this theory, for the sake of completeness, we briefly recollect essential results which will be used later on. 

Consider the diffusion process:
\begin{equation}\label{GenSGF}
z_t^\sigma=z_0+\sigma \mathcal W_t+\int_0^t b\left(z_s^\sigma\right)ds\,,\,t\ge 0,
\end{equation}
where $b$ is a smooth vector field on $\bRb^m$ ($m\geq1$) and $\mathcal W$ is a $\mathbb R^m$-Brownian motion. As~$\sigma$ decreases, the paths of $z^\sigma$ become closer to the deterministic dynamical system $\Psi(z_0)$ defined by 
\begin{equation*}\label{GenSGFbis}
\Psi_t(z_0)=z_0+\int_0^t b\left(\Psi_s(z_0)\right)ds,\,t\ge 0.
\end{equation*}
Precisely, the  way at which $z^\sigma$ approaches $\Psi(z_0)$ obeys to the following large deviations principle: for any finite arbitrary time horizon $T$ and for any $\delta>0$, as $\sigma\downarrow 0$,
\begin{equation}\label{LDP}
\log\PP\left\{\sup_{t\in[0;T]}\gaga z_t^\sigma-\Psi_t(z_0)\drdr>\delta\right\}
\approx -\frac{1}{2\sigma^2}\inf_{\Phi}\left\{\int_0^T\gaga \frac{d\Phi_t(z_0)}{dt}-b(\Phi_t(z_0))\drdr^2\,dt\right\}\,,
\end{equation}
the infimum being taken over the class of continuously differentiable functions  $\Phi:[0,T]\rightarrow \mathbb R^m$ starting from $z_0$ at $t=0$, and such that $\max_{0\le t\le T}\gaga \Phi_t(z_0)-\Psi_t(x_0)\drdr>\delta$. In the case where the orbits $\Psi(z_0)$ have a unique attractor - that is $\lim_{t\rightarrow \infty}\Psi_t(z_0)=a_0$ for any starting point $z_0$ -
the paths of $z^\sigma$ are naturally wandering around a neighborhood of $a_0$, as $\sigma$ decreases to $0$. 

As such, the diffusion will stick to a neighborhood of $a_0$  for an arbitrary long-time, for small values of $\sigma$. One may wonder which time scale will it take for $z^\sigma$ to escape such neighborhood. For this question, the relevant sets to consider are given by the class of stable sets.

\begin{defn}
\label{def:stab} A subset $\mathcal{G}$ of $\mathbb{R}^{d}$ is said to be stable by the vector field $b$ if the orbits~$\Psi(z_0)=(\Psi_t(z_0))_{t\ge 0}$ defined as in~\eqref{GenSGFbis} are included in $\mathcal{G}$ for all initial state $z_0\in\mathcal{G}$.
\end{defn}
While the terminology ``stable by'' is more often referred to as ``positively invariant by'' in the literature, we retain the former terminology to remain consistent with the bibliographic sources referenced hereafter.

Under not-too restrictive assumptions on $b$,  M. I. Freidlin and A. D. Wentzell established the exponential growth of the exit-time of $z^\sigma$ from a domain $\cGc$ stable by $b$  and the concentration point of $z^\sigma$ evaluated at this exit-time.
 In the special case of stochastic gradient flows, with $b=-\nabla U$, Freidlin-Wentzell results formulate as follows (see \cite[Chapter 4, Theorem~3.1]{FW98}, \cite[Theorem 5.7.11]{DZ}):

\begin{thm}\label{thm:KramersDZ} Assume that $U:\mathbb R^m\rightarrow \mathbb R$ is of class $\mathcal C^1$ with $\nabla U$ Lipschitz continuous on $\mathbb R^d$, and define the exit-time
$$
\tau_{\cGc}(\sigma)=\inf\{t>0\,:\,z_t^\sigma \in\partial \cGc\},
$$
for $z^\sigma$ as in \eqref{GenSGF} and $\mathcal G$ an open bounded set of $\mathbb R^d$, stable by $-\nabla U$ and $\mathcal G$ contains one and only one minimizer of $U$, $a_0$.
If, for all $z_0$ in the closure $\overline{\mathcal G}$, $\Psi_t(z_0)$ converges to $a_0$ as $t\uparrow \infty$ then, for any $z_0$ in $\mathcal{G}$, we have
\begin{equation}\label{GenericKramers}
\lim_{\sigma\to0}\PP\left\{\exp\left[\frac{2}{\sigma^2}\left(\underline{H}-\delta\right)\right]<\tau_{\cGc}(\sigma)<
\exp\left[\frac{2}{\sigma^2}\left(\underline{H}+\delta\right)\right]\right\}=1\,,
\end{equation}
where
$$
\underline{H}:=\inf_{z\in \partial\mathcal G}\big(U(z)-U(a_0)\big)\,,
$$
corresponds to the so-called {\emph exit-cost} (from $\mathcal G$).
\noindent
Additionally, 

\noindent
$(a)$ For all $z_0$ in $\mathcal G$, $\lim_{\sigma\rightarrow 0}\frac{\sigma^2}{2}\log\Big(\mathbb E\big\{\tau_{\cGc}(\sigma)\big\}\Big)=\underline{H}$\,;

\noindent
$(b)$ For any closed subset $\mathcal N$ of $\partial \mathcal G$ such that

\begin{equation*}
\inf_{z\in \mathcal N}\left[U(z)-U(a_0)\right] >\inf_{z\in\partial \mathcal G}\left[U(z)-U(a_0)\right]\,,
\end{equation*}

then
\[
\lim_{\sigma\to0}\mathbb P\left\{z_{\tau_{\cGc}(\sigma)}^\sigma\in \mathcal N\right\}=0.
\]
\end{thm}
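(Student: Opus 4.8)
Theorem~\ref{thm:KramersDZ} is the classical Freidlin--Wentzell exit-time result specialised to gradient dynamics, so the plan is to follow the standard three-part route: the sample-path large deviations principle, the explicit quasi-potential of a gradient field, and a renewal/excursion decomposition of the trajectory around $a_0$; complete arguments can be found in \cite[Chapter~4]{FW98} and \cite[Section~5.7]{DZ}.

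First I would record \eqref{LDP} in the form that is uniform with respect to the initial datum over compact sets, and identify the quasi-potential $V(a_0,z):=\inf\{I_T(\Phi):\Phi(0)=a_0,\ \Phi(T)=z,\ T>0\}$, where $I_T(\Phi)=\tfrac12\int_0^T\gaga\dot\Phi_t+\nabla U(\Phi_t)\drdr^2\,dt$. Completing the square,
\[
\int_0^T\gaga\dot\Phi_t+\nabla U(\Phi_t)\drdr^2dt=\int_0^T\gaga\dot\Phi_t-\nabla U(\Phi_t)\drdr^2dt+4\big(U(\Phi_T)-U(\Phi_0)\big),
\]
so any path from $a_0$ to $z$ costs at least $2(U(z)-U(a_0))$, with equality reached in the $T\uparrow\infty$ limit along the time-reversal of the descent flow $\dot\psi=-\nabla U(\psi)$. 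Since $\overline{\mathcal G}$ lies in the basin of $a_0$, such reversed flows, started near $a_0$, can be steered arbitrarily close to any prescribed boundary point; hence $V(a_0,z)=2(U(z)-U(a_0))$ and the exit cost is $\inf_{z\in\partial\mathcal G}V(a_0,z)=2\underline H$.

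Next I would establish \eqref{GenericKramers} by the usual cycle argument. For the upper bound on $\tau_{\mathcal G}(\sigma)$: the LDP lower bound applied to the near-optimal reversed-flow path gives, uniformly over a small neighbourhood $\mathcal B$ of $a_0$, an exit probability $\PP_{z_0}\{\tau_{\mathcal G}(\sigma)\le T\}\ge\exp[-\tfrac1{\sigma^2}(2\underline H+\delta)]$ within a fixed horizon $T$; combining this with the fact that, $\overline{\mathcal G}$ being attracted to $a_0$, the (not yet exited) diffusion returns to $\mathcal B$ in bounded time with probability tending to one, and iterating the strong Markov property over windows of length $\sim T$, one dominates $\tau_{\mathcal G}(\sigma)$ by a geometric number of windows and obtains $\PP\{\tau_{\mathcal G}(\sigma)<\exp[\tfrac2{\sigma^2}(\underline H+\delta)]\}\to1$. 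For the lower bound, I would decompose the path into cycles between successive visits to two concentric spheres $\partial B(a_0,\rho')$ and $\partial B(a_0,\rho'')$ with $B(a_0,\rho'')\subset\mathcal G$; the LDP upper bound bounds the probability of exiting $\mathcal G$ during one cycle by $\exp[-\tfrac1{\sigma^2}(2\underline H-\delta)]$, and a union bound over the number of cycles completed before a given time (controlled via a lower bound on the typical cycle duration) gives $\PP\{\tau_{\mathcal G}(\sigma)>\exp[\tfrac2{\sigma^2}(\underline H-\delta)]\}\to1$.

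Statement $(a)$ would then follow by bracketing $\mathbb E\{\tau_{\mathcal G}(\sigma)\}$ between the geometric-domination upper bound and the Markov-inequality lower bound $\tfrac12\exp[\tfrac2{\sigma^2}(\underline H-\delta)]$ furnished by \eqref{GenericKramers}, then applying $\tfrac{\sigma^2}{2}\log(\cdot)$ and letting $\sigma\downarrow0$ and $\delta\downarrow0$. For $(b)$: exiting through $\mathcal N$ costs at least $2\inf_{z\in\mathcal N}(U(z)-U(a_0))>2\underline H$ per cycle, so a union bound over the $\lesssim\exp[\tfrac1{\sigma^2}(2\underline H+\delta)]$ cycles needed to leave $\mathcal G$ at all (from the upper bound just obtained) shows $\PP\{z^\sigma_{\tau_{\mathcal G}(\sigma)}\in\mathcal N\}\to0$ once $\delta$ is chosen below $\inf_{\mathcal N}(U-U(a_0))-\underline H$. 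The main obstacle is not the elementary quasi-potential computation but the renewal machinery: making the strong Markov excursion decomposition rigorous, controlling the recurrence to the neighbourhood of $a_0$ — which is exactly where the hypothesis that the whole closure $\overline{\mathcal G}$ is attracted to $a_0$ is used — and securing the LDP bounds uniformly over the relevant families of starting points and over exponentially many cycles.
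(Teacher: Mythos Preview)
Your sketch is correct and follows exactly the classical Freidlin--Wentzell route that the paper itself defers to: the paper does not prove Theorem~\ref{thm:KramersDZ} but quotes it directly from \cite[Chapter~4, Theorem~3.1]{FW98} and \cite[Theorem~5.7.11]{DZ}, and your outline (sample-path LDP, explicit gradient quasi-potential via completing the square, excursion/renewal decomposition between concentric spheres around $a_0$) is precisely the argument found in those references.
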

The properties \eqref{GenericKramers} and $(a)$ above state that 
$\tau_{\mathcal G}(\sigma)$ grows at an exponential rate  in probability and in average. The property $(a)$ specifically recovers the so-called Arrhenius' law (\cite{Arrhenius-1889}, \cite{Laider-84}) also known as a first-form of Kramers-Eyring law (\cite{Eyring-35}, \cite{Kramers-40}, \cite{Berglund-13}). Additionally, the result demonstrates that the exit-location $z^\sigma_{\tau_{\mathcal G}(\sigma)}$ asymptotically concentrates on the region where it is the least costly  to exit the domain starting from the attraction point $a_0$. In particular, if there exists a unique~$z^\star$ in $\partial\mathcal G$ such that $U(z^\star)-U(a_0)=\inf_{z\in\partial\mathcal G}(U(z)-U(a_0))$, then, for all $\delta>0$, $z_0\in\mathcal G$,
\begin{equation*} 
\lim_{\sigma\rightarrow 0}\mathbb P\left\{||z_{\tau_{\cGc}(\sigma)}^\sigma-z^\star||<\delta\right\}=1\,.
\end{equation*}
It is worth noticing that Freidlin-Wentzell results are purposely stated here in a simplified framework and the original  Freidlin-Wentzell result in  \cite{FW98}, \cite{DZ} also holds true  for non-reversible processes. On the other hand, the force field $-\nabla U$ is assumed to be globally Lipschitz, whereas, in the paper, we consider a setting which clearly violates this condition. In effect, the globally Lipschitz assumption can be weakened to a local one (this extension was previously remarked in e.g.  \cite{HIP}).

The question of whether Freidlin-Wentzell theory applies to self-stabilizing diffusions is not new and has been addressed by S.~Herrmann, P.~Imkeller and D. Peithmann in their work~\cite{HIP}. Therein, the authors establish an analog of Theorem~\ref{thm:KramersDZ}, which they refer to as a Kramers' type law, for the self-stabilizing diffusion $Z$ satisfying
\begin{equation*}
\left\{
\begin{aligned}
&Z_t=z+\sigma B_t-\int_0^t\Big(\nabla U\left(Z_s\right)+\int \nabla F\left(Z_s-x\right)\,\mu^Z(s,dx)\Big)ds\,,\\
&\mu^Z(t)=\text{Law}(Z_t)\,,\,t\ge 0\,.
\end{aligned}
\right.
\end{equation*}
The potentials $U$ and $F$ are both assumed to be uniformly convex and the unique attractor of~$U$ is $a$. The law for the exit-time $\tau_{\mathcal G}(\sigma):=\inf\{t\ge 0\,:\,Z_t\notin \mathcal G\}$ is there (\cite[Theorem~4.2 and Section 5]{HIP}) given by  
\begin{equation}\label{KramersSelfStab}
\tau_{\mathcal G}(\sigma)\approx\exp\bigg[\frac {2}{\sigma^2}\underline H \bigg],\ \ \underline{H}:=\inf_{z\in\partial\mathcal{G}}\{U(z)-U(a)+F(z-a)\}\,,
\end{equation}
for $\approx$ denoting asymptotic equivalence as in \eqref{GenericKramers}.

In a series of papers, the second author has successfully extended these Kramers' type laws to the situation of a non-globally convex potential $U$, under  a synchronized regime or weaker assumptions (see~\cite{JOTP},~\cite{Alea} and references therein) as well as for stochastic particle systems (\cite{ESAIM_particles}). Parts of the strategies displayed in these papers will be adapted or extended herein. Notably, the coupling technique which asserts that a self-stabilizing diffusion can be found arbitrarily close to a given Markov process after a certain (deterministic) time.  

\subsection{Main results}
For the case of the self-stabilizing diffusions \eqref{MV1}-\eqref{MV2}, the Kramers' type law for $C_\varepsilon(\sigma)$ defined in \eqref{berlin-bogota} states as follows:
\begin{thm}
\label{thm:main1} Let $\underline{H}_0$ be the minimum of the function $H_0:\mathbb R^d\rightarrow \mathbb R$ given by
\begin{equation}\label{collisioncost}
H_0(\lambda):=2V(\lambda)-V(\lambda_1)-V(\lambda_2)+F(\lambda-\lambda_1)+F(\lambda-\lambda_2)\,,
\end{equation}
and let $\lambda_0$ be the unique minimizer of $H_0$.
Then, for any $\delta>0$, 

\begin{equation*}
\lim_{\varepsilon\to0}\lim_{\sigma\to0}\PP
\left\{\exp\left[\frac{2}{\sigma^2}\left(\underline{H}_0-\delta\right)\right]
<{C}_\varepsilon(\sigma)
<\exp\left[\frac{2}{\sigma^2}\left(\underline{H}_0+\delta\right)\right]\right\}=1\,.
\end{equation*}
Moreover, the collision-location $(X_{C_{\varepsilon}(\sigma)},Y_{C_\varepsilon(\sigma)})$ persists asymptotically in the vicinity of $\lambda_0$. Namely: for any $\delta>0$,
\begin{equation*}
\lim_{\varepsilon\to0}\lim_{\sigma\to0}\PP\left\{\max\bigg(\gaga X_{{C}_\varepsilon(\sigma)}-\lambda_0\drdr,\gaga Y_{{C}_\varepsilon(\sigma)}-\lambda_0\drdr\bigg)\leq\delta\right\}=1\,.
\end{equation*}
\end{thm}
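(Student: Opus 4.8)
The plan is to reduce the collision problem for the pair $(X,Y)$ to a Freidlin--Wentzell exit-time problem for a single $2d$-dimensional perturbed gradient flow, and then apply the Kramers' type law of Theorem~\ref{thm:KramersDZ} (in its locally Lipschitz version, as noted after the theorem). First I would introduce the process $Z_t:=(X_t,Y_t)\in\mathbb R^{2d}$, driven by the $2d$-dimensional Brownian motion $(B,\widetilde B)$ with common intensity $\sigma$. By Assumption $(\mathbf A)-(iii)$ the effective potential seen by each diffusion is uniformly convex, and by synchronization the laws $\mu^X(t)$ and $\mu^Y(t)$ concentrate, as $\sigma\downarrow 0$, around $\phi_t(x_1)$ and $\phi_t(x_2)$, hence around $\lambda_1$ and $\lambda_2$ for large $t$; so I would first replace the McKean--Vlasov drift by a frozen-measure drift $-\nabla V(x)-\nabla F(x-\lambda_1)$ for $X$ and $-\nabla V(y)-\nabla F(y-\lambda_2)$ for $Y$, controlling the error by the coupling technique borrowed from \cite{JOTP,Alea,HIP} (a self-stabilizing diffusion stays arbitrarily close, after a deterministic time and with high probability, to the associated linear Markov process). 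The resulting comparison process is a genuine stochastic gradient flow $z^\sigma$ on $\mathbb R^{2d}$ with potential
\begin{equation*}
U(x,y):=V(x)+V(y)+F(x-\lambda_1)+F(y-\lambda_2),
\end{equation*}
which is uniformly convex at infinity once restricted appropriately, with a unique minimizer $a_0=(\lambda_1,\lambda_2)$ inside the domain $\mathcal G_\varepsilon:=\mathbb R^{2d}\setminus\triangle_\varepsilon$ intersected with a large ball, and the orbits of $-\nabla U$ starting in $\overline{\mathcal G_\varepsilon}$ converge to $a_0$ (here I would use $(\mathbf A)-(ii)$ and $(\mathbf A)-(iv)$ to locate $(\lambda_1,\lambda_2)$ outside $\triangle_\varepsilon$ for $\varepsilon<\varepsilon_0$, and convexity of the effective potential to rule out other attractors).

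Next I would compute the exit-cost. Since $C_\varepsilon(\sigma)$ is the first hitting time of $\triangle_\varepsilon$, it is the exit-time $\tau_{\mathcal G_\varepsilon}(\sigma)$ from $\mathcal G_\varepsilon$, so Theorem~\ref{thm:KramersDZ} gives the exponential rate
\begin{equation*}
\underline H(\varepsilon)=\inf_{(x,y)\in\partial\mathcal G_\varepsilon}\big(U(x,y)-U(\lambda_1,\lambda_2)\big)
=\inf_{\|x-y\|=2\varepsilon}\Big(V(x)+V(y)-V(\lambda_1)-V(\lambda_2)+F(x-\lambda_1)+F(y-\lambda_2)\Big),
\end{equation*}
the part of $\partial\mathcal G_\varepsilon$ on the large sphere contributing a strictly larger cost by the growth of $V$, so it is negligible for the $\liminf$/$\limsup$. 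Then I would send $\varepsilon\downarrow 0$: the constraint $\|x-y\|=2\varepsilon$ degenerates to $x=y=:\lambda$, and the infimum converges to $\inf_\lambda\big(2V(\lambda)-V(\lambda_1)-V(\lambda_2)+F(\lambda-\lambda_1)+F(\lambda-\lambda_2)\big)=\inf_\lambda H_0(\lambda)=\underline H_0$; uniform convexity of $\lambda\mapsto H_0(\lambda)$ (it is $2V$ plus the strictly convex quadratic $F(\lambda-\lambda_1)+F(\lambda-\lambda_2)$, convex at infinity by $(\mathbf A)-(i)$ and with the quadratic dominating near the origin via $(\mathbf A)-(iii)$) gives both existence and uniqueness of the minimizer $\lambda_0$ and continuity of the constrained infima in $\varepsilon$, which justifies exchanging $\lim_{\varepsilon}$ and the exit-cost. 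The persistence statement then follows from part $(b)$ of Theorem~\ref{thm:KramersDZ}: the minimizing configuration on $\partial\mathcal G_\varepsilon$ converges as $\varepsilon\to0$ to the diagonal point $(\lambda_0,\lambda_0)$, and any closed piece of $\partial\mathcal G_\varepsilon$ bounded away from a shrinking neighborhood of $(\lambda_0,\lambda_0)$ has strictly larger cost, so the exit-location concentrates near $(\lambda_0,\lambda_0)$; translating back through the coupling gives $\max(\|X_{C_\varepsilon(\sigma)}-\lambda_0\|,\|Y_{C_\varepsilon(\sigma)}-\lambda_0\|)\le\delta$ with probability tending to one.

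The main obstacle is the passage from the McKean--Vlasov dynamics to the frozen-measure gradient flow $z^\sigma$ in a way that is uniform enough to survive the double limit $\lim_{\varepsilon\to0}\lim_{\sigma\to0}$. The subtlety is that the Kramers' law is applied on the time scale $\exp(2\underline H(\varepsilon)/\sigma^2)$, which blows up as $\sigma\downarrow0$, so I need the coupling estimate to hold not just on a fixed horizon but up to (and beyond) the exit time; this is exactly the kind of difficulty handled in \cite{HIP} and \cite{JOTP}, where one shows the law of the diffusion stays within $O(\sigma)$ (in a suitable sense) of the synchronized deterministic profile for all times before exit, using the uniform-in-time moment bounds from $(\mathbf A)$ and a Gronwall argument on the difference of the two drifts. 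A secondary technical point is that $\nabla U$ is only locally Lipschitz and $\mathcal G_\varepsilon$ must be truncated to a bounded set; I would choose the truncating ball large enough that the orbits never reach its boundary and that the extra boundary contribution to the exit-cost is strictly suboptimal, so the truncation is invisible in the limit. Once these two points are secured, assembling the estimates into the stated double-limit statements is routine.
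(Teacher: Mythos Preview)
Your reduction to a $2d$-dimensional frozen-measure gradient flow and the use of the coupling of \cite{JOTP,Alea} are exactly what the paper does in Section~\ref{sec:SelfStabilizingCase} (Lemma~\ref{lucille} and Proposition~\ref{torche}). Where your argument breaks is the step where you apply Theorem~\ref{thm:KramersDZ} directly to the domain $\mathcal G_\varepsilon=(\mathbb R^{2d}\setminus\triangle_\varepsilon)\cap B_R$. That theorem requires $\mathcal G_\varepsilon$ to be \emph{stable} (positively invariant) by $-\nabla U$, and this fails: take any point $(x_0,y_0)$ with $\|x_0-y_0\|$ slightly larger than $2\varepsilon$ but with $x_0,y_0$ both on the same side of the landscape, far from the wells. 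The decoupled flows $\dot x=-\nabla\Psi_1(x)$ and $\dot y=-\nabla\Psi_2(y)$ send $x$ towards $\lambda_1$ and $y$ towards $\lambda_2$, and in general the trajectories cross (or come within $2\varepsilon$) on the way; in one dimension this is immediate, since $x_t-y_t$ changes sign. Your paragraph checks only that orbits converge to $(\lambda_1,\lambda_2)$, not that they remain in $\mathcal G_\varepsilon$ throughout. The paper flags this very obstruction in the paragraph following the statement of Theorem~\ref{thm:main1} (``due to the \textit{a-priori} lack of stability of $(\mathbb R^d\times\mathbb R^d)\setminus\triangle_\varepsilon$, this direct argument can not be applied'').

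The paper's fix is not to repair the domain $\mathcal G_\varepsilon$ but to change the parametrization: write $C_\varepsilon(\sigma)=\inf_{\lambda\in\mathbb R^d}\beta_{\lambda,\varepsilon}(\sigma)$, where $\beta_{\lambda,\varepsilon}(\sigma)$ is the first time both $X$ and $Y$ lie in $\mathbb B(\lambda;\varepsilon)$. For each fixed $\lambda$, the complement of $\overline{\mathbb B(\lambda;\varepsilon)}\times\overline{\mathbb B(\lambda;\varepsilon)}$ is still not stable, so a further enlargement using the \emph{ascending} flows of $\Psi_1,\Psi_2$ is introduced (the sets $\mathcal D^i_{\lambda,\epsilon}$ in Section~\ref{subsec:Linear-collisionA}); the point is that this enlargement does not change the exit-cost, which still equals $\inf_{x\in\partial\mathbb B(\lambda;\varepsilon)}(\Psi_1(x)-\Psi_1(\lambda_1))+\inf_{y\in\partial\mathbb B(\lambda;\varepsilon)}(\Psi_2(y)-\Psi_2(\lambda_2))$. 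One then has a Kramers' law for each $\beta_{\lambda,\varepsilon}(\sigma)$ with cost $\widehat H_\varepsilon(\lambda)$, and a separate compactness/covering argument (Proposition~\ref{lacollision}) to pass to the infimum over $\lambda$ and obtain the law for $C_\varepsilon(\sigma)$ with cost $\underline H_\varepsilon$. Only at the very end does one let $\varepsilon\downarrow0$ to recover $\underline H_0$ and $\lambda_0$. If you want to salvage your direct route, you would need either to exhibit, for each $\varepsilon<\varepsilon_0$, a domain sandwiched between $\mathcal G_{\varepsilon'}$ and $\mathcal G_{\varepsilon}$ that \emph{is} stable by $(-\nabla\Psi_1,-\nabla\Psi_2)$ and has the same boundary cost, or to invoke a version of Freidlin--Wentzell for non-invariant domains; neither is supplied by your sketch.
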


\begin{rem} Notice that the existence and the uniqueness of the minimizer $\lambda_0$ is a direct consequence of the {\it synchronization} assumption $({\bf A})-(iii)$. As this condition yields the uniform 
convexity of $x\mapsto V(x)+F(x-m)$ for any $m$ in $\mathbb R^d$, readily,
\begin{equation*}\label{collisionlocation}
\lambda_0=\Big(\nabla V+\alpha {\rm Id}\Big)^{-1}\bigg(\frac{\alpha}{2}(\lambda_1+\lambda_2)\bigg),
\end{equation*}
for ${\rm Id}$ denoting the identity map $x\mapsto {\rm Id}(x)=x$. 
Illustratively, in the situation of a perfectly symmetrical landscape - namely $\lambda_1=-\lambda_2$ - $\lambda_0$ is simply the root of $x\mapsto \nabla V(x)+\alpha x$, and in the more prototypical case where $V(x)=x^4/4-x^2/2$, the first collision-location persists at the hill $\lambda_0=0$. In the situation of the asymmetric double-wells potential $V(x)=x^4/4+x^3/3-x^2/2$, where the wells are located at the points $-1/2\pm \sqrt{5}/2$ - the lowest well lying in $-1/2- \sqrt{5}/2$ -, and for a synchronization $\alpha>4/3$, $\lambda_0$ corresponds to the root of the polynomial $V'(x)+\alpha (x+1/2)$. This point is notably distinct from the saddle point~$x=0$ of~$V'$.
\end{rem}

{The above Kramers' type law is on par with what one would have expected from a heuristic application of~\eqref{KramersSelfStab}: assuming that the domain~$(\mathbb R^d\times\mathbb R^d)\setminus\triangle_\varepsilon$ was stable, $C_\varepsilon(\sigma)$ would obey to the general asymptotic estimate \eqref{GenericKramers} with the exit-cost
\[
\inf_{(x,y)\in\partial \triangle_\varepsilon}\Big(V(x)-V(\lambda_1)+F(x-\lambda_1)+V(y)-V(\lambda_2)+F(y-\lambda_2)\Big)\,.
\]
Also, all possible exit-locations from $(\mathbb R^d\times\mathbb R^d)\setminus\triangle_\varepsilon$ would be resting in the corresponding set of minimizers. Since the boundary $\partial \triangle_\varepsilon$ corresponds to the set~$\{(x,y)\in \mathbb R^d\times \mathbb R^d\,:\,||x-y||=2\varepsilon\}$, as $\varepsilon$ is taken smaller and smaller, the exit-cost  would get closer to  
\[
\inf_{x}\Big(2V(x)-V(\lambda_1)-V(\lambda_2)+F(x-\lambda_1)+F(x-\lambda_2)\Big)\,,
\]
 that is $\underline{H}_0=\inf H_0$, and  exit-locations closer to the related minimizer, $\lambda_0$. However, due to the {\it a-priori} lack of stability of $(\mathbb R^d\times\mathbb R^d)\setminus\triangle_\varepsilon$, this direct argument can not be applied and a rigorous demonstration of Theorem \ref{thm:main1} requires a substantial detour. To circumvent the possible instability of $(\mathbb R^d\times\mathbb R^d)\setminus\triangle_\varepsilon$, we rather interpret $C_\varepsilon(\sigma)$ as the first time $X$ and~$Y$ are simultaneously found at an $\varepsilon$-neighborhood of any point $\lambda$ of $\mathbb R^d$. Precisely~$C_\varepsilon(\sigma)$ can be written as $\inf_{\lambda}\beta_{\lambda,\varepsilon}(\sigma)$ for $\beta_{\lambda,\varepsilon}(\sigma)$ the first time $X$ and $Y$ both enter the ball of radius $\varepsilon$ and centered in $\lambda$. Indeed, by triangular inequality, $C_\varepsilon(\sigma)\leq\inf_{\lambda}\beta_{\lambda,\varepsilon}(\sigma)$, meanwhile $C_\varepsilon(\sigma)\geq\beta_{\lambda_{\varepsilon,\sigma},\varepsilon}(\sigma)\geq\inf_{\lambda}\beta_{\lambda,\varepsilon}(\sigma)$, for $\lambda_{\varepsilon,\sigma}:=2^{-1}(X_{C_\epsilon(\sigma)}+Y_{C_\epsilon(\sigma)})$.
 As each~$\beta_{\lambda,\varepsilon}(\sigma)$ approximates the first time $X$ and $Y$ meet at the point $\lambda$,  these stopping times allow a parametrization of the possible collision-location (for instance at $\lambda$). Subsequently, this parametrization enables us to lean against Freidlin-Wentzell theory, provided some suitable preliminaries (which will be the subject of Section \ref{sec:LinearCase}), and next, to borrow and to adapt the coupling techniques from~\cite{Alea},~\cite{Kinetic}.

This strategy further allows to derive similar asymptotics for~$C^i_{\varepsilon,N}(\sigma)$ defined in \eqref{berlin-bogota2}. The Kramers' type law in this case is given by the following theorem. 
 
 \begin{thm}\label{thm:main2} Let $\underline{H}_0$ and $\lambda_0$ be as in Theorem \ref{thm:main1}. Then, for $N$ large enough, it holds: for any $1\le i\le N$, $\delta>0$,
\begin{equation*}
\lim_{\varepsilon\to0}\lim_{\sigma\to0}\PP\left\{\exp\left[\frac{2}{\sigma^2}\left(\underline{H}_0-\delta\right)\right]<\mathcal{C}^i_{\varepsilon,N}(\sigma)<
\exp\left[\frac{2}{\sigma^2}\left(\underline{H}_0+\delta\right)\right]\right\}=1\,,
\end{equation*}
 and
\begin{align*}
&\lim_{\varepsilon\to0}\lim_{\sigma\to0}\PP\left\{\max\bigg(\gaga X_{\mathcal{C}^i_{\varepsilon,N}(\sigma)}^{i,N}-\lambda_0\drdr,\gaga Y_{\mathcal{C}^i_{\varepsilon,N}(\sigma)}^{i,N}-\lambda_0\drdr\bigg)\leq\delta\right\}=1\,.\nonumber
\end{align*}
\end{thm}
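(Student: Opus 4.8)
The plan is to deduce Theorem~\ref{thm:main2} from the same \emph{effective} exit-problem that underlies Theorem~\ref{thm:main1}, the passage from the $N$-particle system to that effective problem being enabled, when $N$ is large, by a ``pinning'' of the two empirical means at the bottoms of the wells of $V$ over a time-horizon longer than the collision time-scale. Concretely, I would first reproduce the parametrization already used for $C_\varepsilon(\sigma)$: write $\mathcal{C}^i_{\varepsilon,N}(\sigma)=\inf_{\lambda\in\bRb^d}\beta^i_{\lambda,\varepsilon,N}(\sigma)$, where $\beta^i_{\lambda,\varepsilon,N}(\sigma):=\inf\{t\ge0:\norm{X^{i,N}_t-\lambda}\le\varepsilon\text{ and }\norm{Y^{i,N}_t-\lambda}\le\varepsilon\}$, the identity following from the triangular inequality together with the choice $\lambda=2^{-1}(X^{i,N}_{\mathcal{C}^i_{\varepsilon,N}(\sigma)}+Y^{i,N}_{\mathcal{C}^i_{\varepsilon,N}(\sigma)})$ for the lower bound. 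It then suffices to obtain, for each fixed $\lambda$, a Kramers' type estimate for $\beta^i_{\lambda,\varepsilon,N}(\sigma)$ with exit-cost converging to $H_0(\lambda)$ as $\varepsilon\downarrow0$ together with concentration of the hitting position near $\lambda$, and then to optimize over $\lambda$ (an essentially compact optimization, since $H_0(\lambda)\to\infty$), using that $H_0$ has the unique minimizer $\lambda_0$.

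Summing \eqref{Ibis} over $i$, the self-stabilizing interaction cancels in the empirical mean $\overline X^N_t:=N^{-1}\sum_j X^{j,N}_t$, which thus solves $d\overline X^N_t=\tfrac{\sigma}{N}\sum_j dB^j_t-\tfrac1N\sum_j\nabla V(X^{j,N}_t)\,dt$ --- a gradient-type diffusion in the, generally non-convex, potential $V$, but with noise intensity of order $\sigma/\sqrt N$ --- while each particle feels the drift $-\nabla V(x)-\alpha(x-\overline X^N_t)$, whose frozen-mean effective potential $x\mapsto V(x)+\tfrac{\alpha}{2}\norm{x-\overline X^N_t}^2$ is uniformly convex by $(\mathbf{A})-(iii)$. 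Combining these facts by a synchronization/convexity argument of the type used in \cite{ESAIM_particles} and in the proof of Theorem~\ref{thm:main1} for the McKean--Vlasov laws, and relying on the uniform-in-$(N,t)$ moment bounds recalled after $(\mathbf{A})$, I would show that, with probability tending to $1$ as $\sigma\downarrow0$, the empirical mean $\overline X^N$ stays in a fixed neighborhood of $\lambda_1$ (the particle cloud staying concentrated around it) up to a time of order $\exp[2Nd_1/\sigma^2]$, $d_1>0$ being the depth of the well of $V$ containing $\lambda_1$; and similarly for the $Y$-system near $\lambda_2$ up to $\exp[2Nd_2/\sigma^2]$. This is where ``$N$ large enough'' is used: taking $N>(\underline H_0+1)/\min(d_1,d_2)$ --- a threshold depending only on $V$ and $\underline H_0$, not on $i,\varepsilon,\sigma$ and, after reducing to $\delta\le1$ by monotonicity, not on $\delta$ --- both pinnings survive past time $\exp[\tfrac{2}{\sigma^2}(\underline H_0+\delta)]$. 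On the corresponding good event, up to an $O(\sigma)$ error in the drift, $X^{i,N}$ is a gradient diffusion with the \emph{fixed} uniformly convex potential $\widetilde V_1(x):=V(x)+\tfrac{\alpha}{2}\norm{x-\lambda_1}^2$ (unique minimizer $\lambda_1$, since $\nabla V(\lambda_1)=0$), the distinct particles become asymptotically independent, and the $X$-system is independent of the $Y$-system (the driving Brownians $B^i$ and $\widetilde B^i$ being independent); a coupling/Gr\"onwall estimate à la \cite{Alea},\cite{Kinetic} would make this quantitative up to the required horizon.

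On this good event, $(X^{i,N},Y^{i,N})$ therefore behaves, up to the relevant exponential horizon, like a genuine $2d$-dimensional gradient diffusion with uniformly convex potential $(x,y)\mapsto\widetilde V_1(x)+\widetilde V_2(y)$, $\widetilde V_2(y):=V(y)+\tfrac{\alpha}{2}\norm{y-\lambda_2}^2$, whose unique minimizer is $(\lambda_1,\lambda_2)$ --- precisely the object treated, for the pair $(X,Y)$, once the laws $\mu^X,\mu^Y$ have settled near $\delta_{\lambda_1},\delta_{\lambda_2}$, in the proof of Theorem~\ref{thm:main1}. Applying Theorem~\ref{thm:KramersDZ} (in its locally-Lipschitz form, as noted after its statement) through the preliminaries of Section~\ref{sec:LinearCase} --- with the caveat that $\bRb^{2d}\setminus(B(\lambda,\varepsilon)\times B(\lambda,\varepsilon))$ is neither bounded nor stable, so that $\beta^i_{\lambda,\varepsilon,N}(\sigma)$ must be sandwiched between first-entrance/first-exit times of stable bounded sublevel sets of $(x,y)\mapsto\widetilde V_1(x)+\widetilde V_2(y)$ --- gives, for each fixed $\lambda$, $\beta^i_{\lambda,\varepsilon,N}(\sigma)\approx\exp[\tfrac{2}{\sigma^2}h_\varepsilon(\lambda)]$ with $h_\varepsilon(\lambda)\to(\widetilde V_1(\lambda)-\widetilde V_1(\lambda_1))+(\widetilde V_2(\lambda)-\widetilde V_2(\lambda_2))=H_0(\lambda)$ as $\varepsilon\downarrow0$, and the hitting position concentrating within $\varepsilon$ of $\lambda$. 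Re-injecting this into the parametrization above, taking the limits in the order $N$ large, then $\sigma\downarrow0$, then $\varepsilon\downarrow0$, and using $\underline H_0=\min H_0=H_0(\lambda_0)$, would yield both the Kramers' law for $\mathcal{C}^i_{\varepsilon,N}(\sigma)$ and the persistence of the collision-location at $\lambda_0$.

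The hardest part will be the long-time control in the middle step: ordinary propagation of chaos is only a finite-time statement, so the pinning of the empirical means at the bottoms of the wells of $V$ --- and the ensuing decoupling onto the \emph{fixed} convex effective potentials $\widetilde V_1,\widetilde V_2$ --- has to be obtained by genuinely exploiting the synchronization hypothesis $(\mathbf{A})-(iii)$ and the convexity of the frozen-mean potential, together with a bootstrap in stopping times (the coupling must be shown valid up to a time that a priori exceeds the very collision time one is estimating), while checking that the residual $O(\sigma)$ fluctuation of the empirical means around $\lambda_1,\lambda_2$ leaves the exit-cost unchanged in the limit $\sigma\downarrow0$.
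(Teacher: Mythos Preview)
Your proposal is correct and follows essentially the same route as the paper: parametrize $\mathcal C^i_{\varepsilon,N}(\sigma)=\inf_\lambda\beta^i_{\lambda,\varepsilon,N}(\sigma)$, pin the empirical means near $\lambda_1,\lambda_2$ for $N$ large over a horizon exceeding $\exp[2(\underline H_\varepsilon+2)/\sigma^2]$, couple each particle to the linearized gradient flow with potential $\widetilde V_k$ (Proposition~\ref{lucille2}), then invoke Section~\ref{sec:LinearCase} to obtain the Kramers' law for $\beta^i_{\lambda,\varepsilon,N}$ (Proposition~\ref{tyrese}) and optimize over $\lambda$ (Proposition~\ref{ned}).

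The one place where the paper proceeds differently from your sketch is the pinning step you flag as hardest. Your heuristic treats $\overline X^N$ as a ``$\sigma/\sqrt N$''-noise gradient diffusion in $V$, but $\overline X^N$ is not autonomous --- its drift is $-N^{-1}\sum_j\nabla V(X^{j,N})$, not $-\nabla V(\overline X^N)$ --- so Freidlin--Wentzell cannot be applied to it directly. The paper instead lifts to the full $\bRb^{dN}$-gradient system with potential $\Upsilon_N(\mathbf x)=\sum_i V(x_i)+\tfrac{\alpha}{4N}\sum_{i,j}\|x_i-x_j\|^2$ and computes (Lemma~\ref{carl2_reborn}) the exit-cost of $\mathbf X^N$ from $\{\|\overline{\mathbf x}^N-\lambda_1\|<\kappa\}$: a Lagrange-multiplier argument shows the constrained minimizer of $\Upsilon_N$ on that boundary has all coordinates equal, giving exit-cost $N\inf_{\|z-\lambda_1\|=\kappa}(V(z)-V(\lambda_1))\to\infty$. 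This recovers exactly your linear-in-$N$ scaling and threshold, but via the ambient $dN$-dimensional exit-problem rather than via the (non-closed) empirical-mean dynamics. Also, the drift error after pinning is $O(\kappa)$ (the radius of the pinning neighborhood), not $O(\sigma)$; this is what makes the Gr\"onwall-type coupling in Proposition~\ref{lucille2} close, taking $\kappa$ small after $\xi$ is fixed.
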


\subsection{Organization of the paper} The next section serves as a preliminary step as well as a guideline for treating self-stabilizing and particle diffusions. In that section, we focus on establishing a Kramers' type law for the first collision-time of two stochastic gradient flows driven each by a different uniformly 
convex potential. Following this preliminary, and, as previously mentioned, relying on a coupling argument, we demonstrate Theorem~\ref{thm:main1} in Section~\ref{sec:SelfStabilizingCase}, and  Theorem~\ref{thm:main2} in Section \ref{sec:ParticleCase}. The last section, Section \ref{sec:1DCase}, focuses on the one-dimensional situation where the exact first collision-times 
\[
C(\sigma)=\inf\{t\ge 0\,:\,X_t=Y_t\}, \ \ \ C^i_N(\sigma)=\inf\{t\ge 0\,:\,X^{i,N}_t=Y^{i,N}_t\}\,,
\]
can be defined. Analog for Theorems~\ref{thm:main1} and~\ref{thm:main2} (see Theorems~\ref{victoire2} and~\ref{victoire3}) are established with more direct arguments than in the multi-dimensional case.

\subsection{Discussion on some extensions}

As previously mentioned, our main results are presented in the simplest form of self-stabilization with $F$ being given by the quadratic form~$\frac{\alpha}{2}||x||^2$ - with $\alpha$ satisfying the synchronization assumption $(\mathbf{A})-(iii)$. The only crucial assumption needed for Theorems~\ref{thm:main1} and \ref{thm:main2} is the consequence of this synchronization condition which makes $x\mapsto V(x)+\int F(x-y)\,\nu(dy)$ uniformly  
convex, independently of $\nu$. The nonlinear McKean derive can be extended into the more general form~$-\int \nabla F(x-y)\mu(t,dy)$, provided this convexity property holds. This could be achieved for~$F(x)=G(||x||)$ for an even polynomial function $G$, with a degree larger than 2, and such that $G(0)=0$. This setting has already been considered in~\cite{ESAIM_particles}, and, with not too much effort, the coupling results, Lemma~\ref{lucille} and  Proposition~\ref{lucille2}, can be established in this weaker setting.

In addition to the extension of the interaction potential $F$, our setting  can also be extended to the situation where \eqref{MV1}-\eqref{MV2} start from random initial states. As long as $(X_0,Y_0)$ is a.s. bounded (to ensure uniform moments control) and as long as the marginal laws of $X_0$ and $Y_0$ have full support on different basins of attraction of $V$, our main results still hold true.

The condition $\mathbf{(A)}-(ii)$ can also be weakened to take into account  a multi-wells landscape. This means to consider, in place of $\mathbf{(A)}-(ii)$, that $V$ admits $m$ (with $m> 2$) distinct minimizers located at distinct points, $\lambda_1,\cdots,\lambda_m$. This generalization does not fundamentally alter the collision between two (self-stabilizing) diffusions and potentially opens the door to consider collision between multiple diffusions. 
Heuristically, we expect that analogs to  Theorems ~\ref{thm:main1} and~\ref{thm:main2}  should hold for the approximated first collision-time between the $m$ self-stabilizing diffusions related to the family of wells. The corresponding exit-cost should be
\[
\inf_\lambda\left\{ \sum_{k=1}^m\big(V(\lambda)-V(\lambda_k)+F(\lambda-\lambda_k)\big)\right\}\,,
\] 
and the collision-location should be found at the point:
\[
\Big(\sum_{l=1}^m\nabla\Psi_l\Big)^{-1}(0),\qquad \Psi_l(x):=V(x)+F( x-\lambda_l)\,.
\]
Assuming back $(\mathbf{A})-(ii)$, this would mean%the case $F(x)=\alpha\frac{||x||^2}{2}$,  
\[
\lambda_0=\Big(\nabla V+\alpha {\rm Id}\Big)^{-1}\bigg(\alpha m^{-1}\sum_{l=1}^m\lambda_{l}\bigg)\,.
\] 
The rigorous derivation of these heuristics are nonetheless non-trivial, and should be addressed carefully. 
 
Finally, coming back to the assumption $\mathbf{(A)}-(iii)$, the {\it synchronization} can itself be weakened. While this condition has been essential in our proof arguments, as pointed out in \cite[Corollary D]{Alea} (for $d=1$) and \cite[Theorem 3.4]{Kinetic} (for general $d>1$), {\it synchronization} may also be weakened for coupling techniques albeit for the case where $\nabla F$ is linear (i.e. $F(x)=||x||^2/2$). The weaker condition formulates there as: for $i=1,2$, there exists $\rho_i>0$ such that, for $x\in\mathbb R^d$
\begin{equation*}
\left( x-\lambda_i\right)\left(\nabla V(x)+\alpha(x-\lambda_i)\right)\geq\rho_i||x-\lambda_i||^2\,.
\end{equation*} 
Let us point out that this condition allows broadly a control of the proximity between
the law of the self-stabilizing diffusions and their assigned attractors (see again \cite{Alea} and \cite{Kinetic} for the precise statement). From this control,~\eqref{carl_bis} below may still deduced. However, coupling estimates will cease to hold true and obtaining Theorems~\ref{thm:main1} and~\ref{thm:main2} under this weaker condition will necessitate a complete new strategy, rather based on expanding~\cite{HIP} into non-(global) convex.
 
\section{On the first collision of two independent stochastic gradient flows}\label{sec:LinearCase}

In this section, we establish the zero-noise asymptotic of the \emph{approximated} collision-time 
\begin{equation}
\label{lori}
c_\epsilon(\sigma):=\inf\left\{t\ge0\,\,:\,\,\gaga x^\sigma_t-y^\sigma_t\drdr \leq2\epsilon\right\},\,\epsilon>0,
\end{equation}
related to the two general  stochastic gradient flows:
\begin{subequations}
\label{elf}
\begin{equation}
\label{carole}
x^\sigma_t=x_0+\sigma B_t-\int_0^t\nabla\Psi_1\left(x^\sigma_s\right)ds,\,\,\,t\ge 0,
\end{equation}
and
\begin{equation}
\label{daryl}
y^\sigma_t=y_0+\sigma \widetilde{B}_t-\int_0^t\nabla\Psi_2\left(y^\sigma_s\right)ds\,,\,\,\,t\ge 0.
\end{equation}
\end{subequations}
The starting points, $x_0$ and $y_0$, are assumed to be distinct,  and the driving potentials, $\Psi_1$ and $\Psi_2$, to be uniformly 
 convex, of class $\mathcal C^2$, and to achieve their minimum in two different points,~$\lambda_1$ and $\lambda_2$ respectively. Additionally, by analogy with $(\mathbf{A})-(iv)$, we assume the orbits: 
\[
\varphi_t^{1,-}=x_0-\int_0^t\nabla\Psi_1\left(\varphi_s^{1,-}\right)ds,\,\,\,t\ge 0,\
\]
and
\[
\varphi_t^{2,-}=y_0-\int_0^t\nabla\Psi_2\left(\varphi_s^{2,-}\right)ds,\,\,\,t\ge 0,
\]
never hit each other at any time $t$. This assumption does not impose the graphs $\Psi_1$ and $\Psi_2$ to be disjoints but ensures that $x^\sigma$ and $y^\sigma$ are collision-free at $\sigma=0$.

Following this last assumption, we set the radius $\epsilon$ in \eqref{lori} to be strictly smaller than
\begin{equation}\label{Linear:Init}
\epsilon_0:=2^{-1}\inf_{t\geq0}\gaga\varphi_t^{1,-}-\varphi_t^{2,-}\drdr> 0\,.
\end{equation}
This way, the initial states, $x_0$ and $y_0$, and the attracting points, $\lambda_1$ and $\lambda_2$, are separated by a distance strictly larger than $2\epsilon_0$. We finally assume from here on that $\epsilon<\epsilon_0$, making as such $c_\epsilon(\sigma)$ non-trivial. (If $\epsilon$ was chosen larger than $\epsilon_0$, then, in view of the LDP~\eqref{LDP}, there would exist $\sigma_0>0,\,T_0<\infty$ such that $\mathbb P\{c_\epsilon(\sigma)\le T_0\}=1$ for any $\sigma\le \sigma_0$ and $c_\epsilon(\sigma)$ would be bounded a.s..)

The asymptotic of $c_\epsilon(\sigma)$ is brought by the interpretation 
$c_\epsilon(\sigma)=\inf_{\lambda\in\bRb^d}\tau_{\lambda,\epsilon}(\sigma)$
for $\tau_{\lambda,\epsilon}$ defining the first time $x^\sigma$ and $y^\sigma$ are simultaneously located at a $\epsilon$-neighborhood of a given point $\lambda$ of $\mathbb R^d$; that is
\begin{equation}
\label{andrea}
\tau_{\lambda,\epsilon}(\sigma):=\inf\left\{t\ge 0\,\,:\,\,||x^\sigma_t-\lambda||\leq\epsilon\mbox{ and }||y^\sigma_t-\lambda||\leq\epsilon\right\}\,.
\end{equation}
From this interpretation, below we first establish a Kramers' type law for the time $\tau_{\lambda,\epsilon}(\sigma)$ and for the location~$(x^\sigma_{\tau_{\epsilon,\lambda}(\sigma)},y^\sigma_{\tau_{\lambda,\epsilon}(\sigma)})$ (see Lemma~\ref{dale-bis} below), deduce next a Kramers' type law for $c_\epsilon(\sigma)$ and~$(x^\sigma_{c_\epsilon(\sigma)},y^\sigma_{c_\epsilon(\sigma)})$ for small enough $\epsilon>0$ (Proposition~\ref{lacollision}), and finally conclude on the asymptotic behavior at the limit $\epsilon\downarrow 0$ (Theorem~\ref{lacollisionthm}).  
 
\subsection{Asymptotic estimates for $\tau_{\lambda,\epsilon}(\sigma)$}
\label{subsec:Linear-collisionA}
 
  According to its very definition,
 $\tau_{\lambda,\epsilon}(\sigma)$ %also 
 corresponds to the first time the diffusion $(x^\sigma,y^\sigma)$ enters $\overline{\mathbb{B}(\lambda;\epsilon)}\times\overline{\mathbb{B}(\lambda;\epsilon)}$, or equivalently exits from the domain
$$
\left(\bRb^d\times\bRb^d\right)\setminus\left(\overline{\mathbb{B}(\lambda;\epsilon)}\times\overline{\mathbb{B}(\lambda;\epsilon)}\right)=:\left(\overline{\mathbb{B}(\lambda;\epsilon)}\times\overline{\mathbb{B}(\lambda;\epsilon)}\right)^c\,,
$$
for $\overline{\mathbb{B}(\lambda;\epsilon)}$ denoting the closed ball centered in $\lambda$ and of radius $\epsilon$. Considering that the set~$\left(\bRb^d\times\bRb^d\right)\setminus\left(\overline{\mathbb{B}(\lambda;\epsilon)}\times\overline{\mathbb{B}(\lambda;\epsilon)}\right)$ is not necessarily stable by $(-\nabla\Psi_1,-\nabla\Psi_2)$, Theorem~\ref{thm:KramersDZ} can not yet be applied to deduce the asymptotics of $\tau_{\lambda,\epsilon}(\sigma)$. This technical difficulty can be bypassed by a two-steps modification of the exit-set~$\left(\overline{\mathbb{B}(\lambda;\epsilon)}\times\overline{\mathbb{B}(\lambda;\epsilon)}\right)^c$ to make it suitable for applying Theorem \ref{thm:KramersDZ}.  
  
 As a first modification, let us consider the sets 
 \begin{equation*}
\label{otis1}
\mathcal{D}_{\lambda,\epsilon}^1:={\left\{\varphi_t^{1,+}(x)\,\,:\,\,t\ge 0,\,x\in\overline{\mathbb{B}\left(\lambda;\epsilon\right)}\right\}}\,,
\end{equation*}
and
\begin{equation*}
\label{otis2}
\mathcal{D}_{\lambda,\epsilon}^2:={\left\{\varphi_t^{2,+}(y)\,\,:\,\,t\ge 0,\,y\in\overline{\mathbb{B}\left(\lambda;\epsilon\right)}\right\}}\,,
\end{equation*}
for $\varphi^{1,+}(x)$ and $\varphi^{2,+}(y)$ corresponding to the {\it ascending} flows related to $\nabla \Psi_1$ and $\nabla\Psi_2$:
\begin{equation*}
\varphi_t^{1,+}(x)=x+\int_0^t\nabla\Psi_1\left(\varphi_s^{1,+}(x)\right)ds,\,t\ge 0 
\,,
\end{equation*}
and
\begin{equation*}
\varphi_t^{2,+}(y)=y+\int_0^t\nabla\Psi_2\left(\varphi_s^{2,+}(y)\right)ds,\,t\ge 0
\,.
\end{equation*}
By definition, $\mathcal{D}_{\lambda,\epsilon}^i$, for $i=1$ or $i=2$, defines a closed domain containing the set of points attainable by the flow $\varphi^{i,+}$ starting from $\overline{\mathbb{B}(\lambda;\epsilon)}$.
Subsequently, the complementary~$\mathbb R^d\setminus\mathcal{D}_{\lambda,\epsilon}^i$ corresponds to the largest set stable by $-\nabla\Psi_i$ contained in $\overline{\mathbb{B}(\lambda;\epsilon)}^c$.
The inherent stability properties further guarantee that
 the domain $\left(\bRb^d\times \bRb^d\right)\setminus(\mathcal{D}_{\lambda,\epsilon}^1\times\mathcal{D}_{\lambda,\epsilon}^2)$ 
is stable by~$(-\nabla\Psi_1,-\nabla\Psi_2)$.  This statement can be simply checked as follows: for any $(x',y')$ in~$\left(\bRb^d\times \bRb^d\right)\setminus(\mathcal{D}_{\lambda,\epsilon}^1\times\mathcal{D}_{\lambda,\epsilon}^2)$, either $x'$ lies in $\bRb^d\setminus\mathcal{D}_{\lambda,\epsilon}^1$ or $y'$ lies in $\bRb^d\setminus\mathcal{D}_{\lambda,\epsilon}^2$, and, in each case, at least one of the marginal domain is stable. Since $\left(\bRb^d\times \bRb^d\right)\setminus(\mathcal{D}_{\lambda,\epsilon}^1\times\mathcal{D}_{\lambda,\epsilon}^2)$ can be rewritten as the union of stable sets $\left(\bRb^d\times\left(\mathcal{D}_{\lambda,\epsilon}^2\right)^c\right)\bigcup\left(\left(\mathcal{D}_{\lambda,\epsilon}^1\right)^c\times\bRb^d\right)$, where~$\left(\mathcal{D}_{\lambda,\epsilon}^i\right)^c:=\bRb^d\setminus\mathcal{D}_{\lambda,\epsilon}^i$, the domain is necessarily stable by $(-\nabla \Psi_1,-\nabla \Psi_2)$.

For any $\lambda\in\mathbb R^d$ and $\epsilon>0$, while $\mathcal{D}_{\lambda,\epsilon}^1\times \mathcal{D}_{\lambda,\epsilon}^2$ is larger than $\mathbb{B}(\lambda;\epsilon)\times \mathbb{B}(\lambda;\epsilon)$, the complementary of this domain fulfills the condition of Theorem \ref{thm:KramersDZ} and exit-costs are only computed along $\partial (\mathbb B(\lambda;\epsilon)\times \mathbb B(\lambda;\epsilon))$, in two main situations: $(a)$ when $\lambda$ is at a distance strictly larger than $\epsilon$ from both wells; $(b)$ when $\lambda$ lies in a $\epsilon$-neighborhood of one of the two wells.

$\bullet$ For $(a)$: whenever $\min_{i=1,2}(||\lambda-\lambda_i||)>\epsilon$, the ball $\mathbb B(\lambda;\epsilon)$ and the domain $\left(\bRb^d\times \bRb^d\right)\setminus(\mathcal{D}_{\lambda,\epsilon}^1\times\mathcal{D}_{\lambda,\epsilon}^2)$ do not contain any well.  According to Theorem \ref{thm:KramersDZ}, the exit-cost related to~$(x^\sigma,y^\sigma)$ leaving the domain is given by
\[
\inf_{(x,y)\in \partial (\mathcal{D}_{\lambda,\epsilon}^1\times\mathcal{D}_{\lambda,\epsilon}^2)} \big(\Psi_1(x)-\Psi_1(\lambda_1)+\Psi_2(y)-\Psi_2(\lambda_2)\big).
\]
Since the minima of $\Psi_1$ and $\Psi_2$ are located outside $\mathbb B(\lambda;\epsilon)$ - and so are outside $\mathcal{D}_{\lambda,\epsilon}^1$ and~$\mathcal{D}_{\lambda,\epsilon}^2$~-~the minimum of $\Psi_1$ and the one of $\Psi_2$ lie necessarily outside $\mathcal{D}_{\lambda,\epsilon}^1\times\mathcal{D}_{\lambda,\epsilon}^2$. Therefore
\begin{align*}
&\inf_{(x,y)\in \partial (\mathcal{D}_{\lambda,\epsilon}^1\times\mathcal{D}_{\lambda,\epsilon}^2)} \big(\Psi_1(x)-\Psi_1(\lambda_1)+\Psi_2(y)-\Psi_2(\lambda_2)\big)\\
&=\inf_{(x,y)\in \mathcal{D}_{\lambda,\epsilon}^1\times\mathcal{D}_{\lambda,\epsilon}^2} \big(\Psi_1(x)-\Psi_1(\lambda_1)+\Psi_2(y)-\Psi_2(\lambda_2)\big)\\
&=\inf_{x\in \mathcal{D}_{\lambda,\epsilon}^1} \big(\Psi_1(x)-\Psi_1(\lambda_1)\big)+\inf_{y\in \mathcal{D}_{\lambda,\epsilon}^2} \big(\Psi_2(y)-\Psi_2(\lambda_2)\big)\\
&=\inf_{x\in \partial\mathcal{D}_{\lambda,\epsilon}^1} \big(\Psi_1(x)-\Psi_1(\lambda_1)\big)+\inf_{y\in \partial\mathcal{D}_{\lambda,\epsilon}^2} \big(\Psi_2(y)-\Psi_2(\lambda_2)\big)\,.
\end{align*}
Additionally, we can observe that, for $i\in\{1,2\}$, the quantity $\inf_{x\in \partial \mathcal{D}_{\lambda,\epsilon}^i}\big(\Psi_i(x)-\Psi_i(\lambda_i)\big)$ is identical to $\inf_{x\in\mathbb \partial\overline{\mathbb{B}(\lambda;\epsilon)}}\big(\Psi_i(x)-\Psi_i(\lambda_i))$. This assertion can be checked, on one side, by observing that, as $\lambda_i$ is outside $\mathbb B(\lambda;\epsilon)$ [resp. $\mathcal{D}_{\lambda,\epsilon}^{i}$], the infimum of $\Psi_i$ on $\overline {\mathbb B(\lambda;\epsilon)}$ [resp. $\mathcal{D}_{\lambda,\epsilon}^{i}$] can only be achieved on the boundary $\partial \mathbb B(\lambda;\epsilon)$ [resp. $\partial \mathcal D^i_{\lambda,\epsilon}$]. Since $\overline{\mathbb B(\lambda;\epsilon)}\subset\mathcal{D}_{\lambda,\epsilon}^{i}$, 
$$
\inf_{x\in\partial \mathcal{D}_{\lambda,\epsilon}^{i}}\Psi_i(x)=\inf_{x\in \mathcal{D}_{\lambda,\epsilon}^i}\Psi_i(x)\le \inf_{x\in \overline{\mathbb{B}(\lambda;\epsilon)}}\Psi_i(x)=\inf_{x\in \partial\mathbb{B}(\lambda;\epsilon)}\Psi_i(x) \ .
$$
 On the other side, by definition, for any point $x$ in $\mathcal{D}_{\lambda,\epsilon}^i$, there exists $x'$ in $\overline{\mathbb B(\lambda;\epsilon)}$ such that $x=\varphi_t^{i,+}(x')$ for some $t$. Since $\Psi_i$ increases along the flow $\varphi^{i,+}$,  
$$
\Psi_i(x)=\Psi_i(\varphi_t^{i,+}(x'))\ge \Psi_i(x')\ge \inf_{z\in \overline{\mathbb{B}(\lambda;\epsilon)}}\Psi_i(z).
$$
 
$\bullet$ For $(b)$: In the case where $\lambda$ is in a close neighborhood of one of the two wells, say~$||\lambda-\lambda_1||=\tilde \varepsilon$ for some $0<\tilde \varepsilon<\epsilon$, then $\mathbb B(\lambda;\epsilon)$ is stable by $-\nabla \Psi_1$ - by convexity of~$\Psi_1$~- and~$\mathcal{D}_{\lambda,\epsilon}^1=\bRb^d$. Since~$\epsilon<\epsilon_0$, $\lambda_2$ is then necessarily located outside $\mathbb B(\lambda;\epsilon)$. Since~$\varphi^{2,+}_t(\lambda_2)=\lambda_2$ for all $t\ge 0$, necessarily $\lambda_2\notin \mathcal D^2_{\lambda,\epsilon}$ and $\mathbb R^d\setminus\mathcal D^2_{\lambda,\epsilon}$ is stable by $-\nabla\Psi_2$. In this case, the related exit-cost is given by
\[
\inf_{y\in\partial \mathcal D^2_{\lambda,\epsilon}}\big(\Psi_2(y)-\Psi_2(\lambda_2)\big)=\inf_{y\in\partial \mathbb B(\lambda;\epsilon)}\big(\Psi_2(y)-\Psi_2(\lambda_2)\big)\,.
\]
The analog can be drawn in the case $||\lambda-\lambda_2||=\tilde \varepsilon$ with the resulting exit-cost:
\[
\inf_{x\in\partial \mathcal D^1_{\lambda,\epsilon}}\big(\Psi_1(x)-\Psi_1(\lambda_1)\big)=\inf_{x\in\partial \mathbb B(\lambda;\epsilon)}\big(\Psi_1(x)-\Psi_1(\lambda_1)\big)\,.
\]
The remaining case ``$(c)$:  $\lambda$ is exactly at a distance $\epsilon$ of $\lambda_1$ or $\lambda_2$'' (that is: one of the wells is located at the boundary of $\mathbb B(\lambda;\epsilon)$)  
is the only situation where the applicability of Theorem~\ref{thm:KramersDZ} of $\mathbb R^d\setminus \mathcal D^i_{\lambda,\epsilon}$ can not be simply identified. This difficulty can be removed by slightly rescaling $\mathcal D^1_{\lambda,\epsilon}\times \mathcal D^2_{\lambda,\epsilon}$ into 
\begin{equation*}
\mathcal O_{\lambda,\epsilon,\rho}:=
\left\{
\begin{aligned}
&\mathcal D^1_{\lambda,\rho\epsilon}\times \mathcal D^2_{\lambda,\epsilon}\,\,\text{if}\,\gaga \lambda-\lambda_{1}\drdr =\epsilon,\\
&\mathcal D^1_{\lambda,\epsilon}\times \mathcal D^2_{\lambda,\rho\epsilon}\,\,\text{if}\,\gaga \lambda-\lambda_{2}\drdr =\epsilon,\\
&\mathcal D^1_{\lambda,\epsilon}\times \mathcal D^2_{\lambda,\epsilon}\,\,\text{otherwise}\,,
\end{aligned}
\right.
\end{equation*}
for $\rho$ arbitrarily chosen in the interval $(0,1)$. Rescaling $\epsilon$ to $\rho \epsilon$ whenever $||\lambda-\lambda_1||=\epsilon$ or~$||\lambda-\lambda_2||=\epsilon$ ensures $\mathcal O_{\lambda,\epsilon,\rho}$ satisfies to the situation $(a)$.

Distinguishing the cases $||\lambda-\lambda_i||=\epsilon$, for $i=1,2$, and according to the discussion above, the set $\mathcal O_{\lambda,\epsilon,\rho}$ is stable by $(-\nabla \Psi_1,-\nabla \Psi_2)$. The related exit-cost
\[
\widehat \h^\rho_\epsilon(\lambda):=\inf_{(x,y)\in\partial \mathcal O_{\lambda,\epsilon,\rho}}\big(\Psi_1(x)+\Psi_2(y)-\Psi_1(\lambda_1)-\Psi_2(\lambda_2)\big)
\]
is equivalently given by
\begin{equation}\label{cost1}
\widehat \h^\rho_\epsilon(\lambda)=\left\{
\begin{aligned}
&\inf_{x\in\partial\mathbb B(\lambda;\rho\epsilon)} \left(\Psi_1(x)-\Psi_1(\lambda_1)\right)+
\inf_{y\in\partial \mathbb B(\lambda;\epsilon)} \left(\Psi_2(y)-\Psi_2(\lambda_2)\right)\,\text{if}\,\gaga \lambda-\lambda_1\drdr=\epsilon,\\
&\inf_{x\in\partial\mathbb B(\lambda;\epsilon)} \left(\Psi_1(x)-\Psi_1(\lambda_1)\right)+\inf_{y\in\partial \mathbb B(\lambda;\rho\epsilon)} \left(\Psi_2(y)-\Psi_2(\lambda_2)\right)
\,\text{if}\,\gaga \lambda-\lambda_2\drdr=\epsilon,\\
&\inf_{x\in\mathcal D^{1}_{\lambda,\epsilon}} \left(\Psi_1(x)-\Psi_1(\lambda_1)\right)+\inf_{
y\in\mathcal D^2_{\lambda,\epsilon}} \left(\Psi_2(y)-\Psi_2(\lambda_2)\right)\,\text{otherwise}\,.
\end{aligned}
\right.
\end{equation}
Further, whenever $\lambda$ is located outside the boundaries $\partial \mathbb B(\lambda_1;\epsilon)$ and  $\partial \mathbb B(\lambda_2;\epsilon)$, 
\begin{align*}
&\inf_{x\in\mathcal D^{1}_{\lambda,\epsilon}} \left(\Psi_1(x)-\Psi_1(\lambda_1)\right)+\inf_{
y\in\mathcal D^2_{\lambda,\epsilon}} \left(\Psi_2(y)-\Psi_2(\lambda_2)\right)
\\
&=\left\{
\begin{aligned}
&\inf_{x\in\partial\mathbb B(\lambda;\epsilon)} \left(\Psi_1(x)-\Psi_1(\lambda_1)\right)+
\inf_{y\in\partial \mathbb B(\lambda;\epsilon)} \left(\Psi_2(y)-\Psi_2(\lambda_2)\right)\,\text{if}\,\min_i\gaga \lambda-\lambda_i\drdr>\epsilon,\\
&\inf_{y\in\partial \mathbb B(\lambda;\epsilon)} \left(\Psi_2(y)-\Psi_2(\lambda_2)\right)
\,\text{if}\,\gaga \lambda-\lambda_1\drdr<\epsilon,\\
&\inf_{x\in \partial \mathbb B(\lambda;\epsilon)} \left(\Psi_1(x)-\Psi_1(\lambda_1)\right) \,\text{if}\,\gaga \lambda-\lambda_2\drdr<\epsilon\,.
\end{aligned}
\right.
\end{align*}

Applying Theorem~\ref{thm:KramersDZ}, we derive the Kramers' type law for the first exit-time 
\begin{equation*}
\widehat{\tau}^\rho_{\lambda,\epsilon}(\sigma):=\inf\left\{t\ge 0\,:\,(x_t^\sigma,y_t^\sigma)\notin (\mathbb R^d\times \mathbb R^d)\setminus \mathcal O_{\lambda,\epsilon,\rho}\right\}\,.
\end{equation*}
\begin{lem}\label{negan}
For any $\lambda$ in $\bRb^d$, $0<\epsilon<\epsilon_0$, $0<\rho<1$ and for any $\delta>0$,

\begin{equation}
\label{gouverneur1}
\lim_{\sigma\to0}\PP\left\{\exp\left[\frac{2}{\sigma^2}\left(\widehat \h^\rho_\epsilon(\lambda)-\delta\right)\right]<
\widehat{\tau}^\rho_{\lambda,\epsilon}(\sigma)<\exp\left[\frac{2}{\sigma^2}\left(\widehat \h^\rho_\epsilon(\lambda)+\delta\right)\right]\right\}=1\,.
\end{equation}
Moreover, we have:    
\begin{equation}
\label{gouverneur2}
\lim_{\sigma\to0}\PP\left\{{\rm dist}\left((x^\sigma_{\widehat{\tau}^\rho_{\lambda,\epsilon}(\sigma)},y^\sigma_{\widehat{\tau}^\rho_{\lambda,\epsilon}(\sigma)}),\mathbb{B}(\lambda;\epsilon)\times\mathbb{B}(\lambda;\epsilon)\right)\leq\delta\right\}=1\,,
\end{equation}
for ${\rm dist}((x,y),\mathbb{B}(\lambda;\eta)\times\mathbb{B}(\lambda;\eta))$
standing for the distance from $(x,y)$ to $\mathbb{B}(\lambda;\eta)\times\mathbb{B}(\lambda;\eta)$.
\end{lem}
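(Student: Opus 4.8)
The plan is to reduce Lemma~\ref{negan} directly to Theorem~\ref{thm:KramersDZ} by checking its hypotheses for the product vector field $(-\nabla\Psi_1,-\nabla\Psi_2)$ on an appropriately truncated version of the domain $(\mathbb R^d\times\mathbb R^d)\setminus\mathcal O_{\lambda,\epsilon,\rho}$, and then transporting the conclusions back through the geometric identities on exit-costs already worked out in this subsection. First I would observe that, as established in the discussion preceding the statement, $\mathcal O_{\lambda,\epsilon,\rho}$ is stable by $(-\nabla\Psi_1,-\nabla\Psi_2)$ in all three cases of its definition (the rescaling by $\rho$ precisely removes the borderline situation $(c)$), so its complement is a candidate exit-set. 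The force field is $-\nabla U$ with $U(x,y):=\Psi_1(x)+\Psi_2(y)$, which is of class $\mathcal C^2$ and, by the uniform convexity of $\Psi_1$ and $\Psi_2$, has $(\lambda_1,\lambda_2)$ as its unique global minimizer; uniform convexity also gives that $\nabla U$ is locally Lipschitz and that all descending orbits $\Psi(z_0)$ of $-\nabla U$ converge to $(\lambda_1,\lambda_2)$. The one genuine mismatch with the verbatim statement of Theorem~\ref{thm:KramersDZ} is that the latter is phrased for an \emph{open bounded} domain with a globally Lipschitz $\nabla U$, whereas $(\mathbb R^d\times\mathbb R^d)\setminus\mathcal O_{\lambda,\epsilon,\rho}$ is unbounded; here I would invoke the remark made just after Theorem~\ref{thm:KramersDZ} (the globally Lipschitz hypothesis weakens to a local one, cf.~\cite{HIP}) and handle unboundedness by the standard Freidlin--Wentzell localization: replace the domain by its intersection with a large ball $\mathbb B(0;R)$ chosen so that, by the uniform convexity at infinity, the descending flow from $\overline{\mathbb B(0;R)}$ re-enters a fixed compact and the exit-cost across $\partial\mathbb B(0;R)$ strictly exceeds $\widehat\h^\rho_\epsilon(\lambda)$, so the exit necessarily occurs through $\partial\mathcal O_{\lambda,\epsilon,\rho}$ and $\widehat\tau^\rho_{\lambda,\epsilon}(\sigma)$ coincides with the exit-time of the truncated domain with probability tending to $1$.

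With this set-up, Theorem~\ref{thm:KramersDZ} yields \eqref{gouverneur1} with exit-cost $\inf_{(x,y)\in\partial\mathcal O_{\lambda,\epsilon,\rho}}\big(U(x,y)-U(\lambda_1,\lambda_2)\big)$, and it remains only to identify this infimum with $\widehat\h^\rho_\epsilon(\lambda)$ as given in \eqref{cost1}. This is exactly the content of the three computations carried out above: in the generic case the infimum over $\partial(\mathcal D^1_{\lambda,\epsilon}\times\mathcal D^2_{\lambda,\epsilon})$ splits as a sum of marginal infima, each marginal infimum over $\partial\mathcal D^i_{\lambda,\epsilon}$ equals the infimum over $\partial\mathbb B(\lambda;\epsilon)$ (using that $\Psi_i$ increases along the ascending flow $\varphi^{i,+}$ and that $\lambda_i\notin\mathcal D^i_{\lambda,\epsilon}$), and in the rescaled cases $\|\lambda-\lambda_i\|=\epsilon$ one of the two marginals is taken over $\partial\mathbb B(\lambda;\rho\epsilon)$ instead; when $\lambda$ lies strictly inside $\mathbb B(\lambda_i;\epsilon)$ the corresponding marginal domain is all of $\mathbb R^d$, so only the other marginal contributes. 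All of this has been verified, so for the proof I would simply quote it.

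For \eqref{gouverneur2}, I would use part $(b)$ of Theorem~\ref{thm:KramersDZ} together with the nesting $\overline{\mathbb B(\lambda;\epsilon)}\times\overline{\mathbb B(\lambda;\epsilon)}\subset\mathcal O_{\lambda,\epsilon,\rho}$. Let $\mathcal N_\delta$ denote the closed subset of $\partial\mathcal O_{\lambda,\epsilon,\rho}$ consisting of points at distance at least $\delta$ from $\mathbb B(\lambda;\epsilon)\times\mathbb B(\lambda;\epsilon)$. By continuity of $U$ and compactness (after the truncation above), $\inf_{\mathcal N_\delta}\big(U-U(\lambda_1,\lambda_2)\big)$ is attained; I must check it is \emph{strictly} larger than $\widehat\h^\rho_\epsilon(\lambda)$. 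This follows because the minimizers of $U$ on $\partial\mathcal O_{\lambda,\epsilon,\rho}$ all lie on the relevant sphere $\partial\mathbb B(\lambda;\epsilon)$ (or $\partial\mathbb B(\lambda;\rho\epsilon)$), hence at distance $0$ from $\overline{\mathbb B(\lambda;\epsilon)}\times\overline{\mathbb B(\lambda;\epsilon)}$ (or, in the $\rho\epsilon$-case, strictly inside it), so they are not in $\mathcal N_\delta$; a compactness/continuity argument then forces the strict inequality. Part $(b)$ of Theorem~\ref{thm:KramersDZ} gives $\PP\{(x^\sigma_{\widehat\tau},y^\sigma_{\widehat\tau})\in\mathcal N_\delta\}\to0$, which is precisely \eqref{gouverneur2}.

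The main obstacle I anticipate is not the exit-cost bookkeeping (done above) but the rigorous handling of the unbounded, non-globally-Lipschitz domain: one must justify carefully, using the uniform convexity of $\Psi_1,\Psi_2$ (equivalently the confining growth of $U$ at infinity), that truncating to a large ball does not change the exit-time asymptotics, and that the local-Lipschitz extension of Theorem~\ref{thm:KramersDZ} indeed applies. A secondary subtlety is the borderline configuration $(c)$, but the definition of $\mathcal O_{\lambda,\epsilon,\rho}$ via the parameter $\rho\in(0,1)$ is designed exactly to sidestep it, so this only needs to be pointed out rather than proved anew.
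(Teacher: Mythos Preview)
Your proposal is correct and follows the same route as the paper's (very brief) proof: invoke Theorem~\ref{thm:KramersDZ} directly for \eqref{gouverneur1} and its exit-location part $(b)$ for \eqref{gouverneur2}, relying on the exit-cost identification \eqref{cost1} already carried out in the text. One wording slip to fix: it is the \emph{complement} $(\mathbb R^d\times\mathbb R^d)\setminus\mathcal O_{\lambda,\epsilon,\rho}$ that is stable by $(-\nabla\Psi_1,-\nabla\Psi_2)$ and plays the role of $\mathcal G$, not $\mathcal O_{\lambda,\epsilon,\rho}$ itself; with that corrected your argument is fine, and your explicit localization to a large ball (using the uniform convexity of $\Psi_1,\Psi_2$) makes precise a point the paper leaves implicit.
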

 \begin{proof} The asymptotic \eqref{gouverneur1} is a direct consequence of Theorem \ref{thm:KramersDZ}-\eqref{GenericKramers}. The estimate~\eqref{gouverneur2} characterizing the persistence of the first exit-location of $(x^\sigma,y^\sigma)$ on $\mathbb B(\lambda;\epsilon)\times \mathbb B(\lambda;\epsilon)$ follows from Theorem \ref{thm:KramersDZ}-$(2)$. Precisely, as $\sigma\downarrow 0$,  $(x^\sigma_{\widehat{\tau}^\rho_{\lambda,\epsilon}(\sigma)},y^\sigma_{\widehat{\tau}^\rho_{\lambda,\epsilon}(\sigma)})$ concentrates on the points on the boundary $\partial \mathcal O_{\lambda,\epsilon,\rho}$ where the potential 
  \[
 (x,y)\mapsto \Psi_1(x)-\Psi_1(\lambda_1)+\Psi_2(y)-\Psi_2(\lambda_2)
 \]
  is minimal. In view of \eqref{cost1}, these minimizers are located on $\partial \mathbb B(\lambda;\epsilon)$ or  $\partial \mathbb B(\lambda;\rho\epsilon)$. And so the exit-location has to persist on $\mathbb{B}(\lambda;\epsilon)\times\mathbb{B}(\lambda;\epsilon)$.
\end{proof}

From Lemma \ref{negan}, we gradually derive a Kramers' type law for $\tau_{\lambda,\epsilon}(\sigma)$ through the two following lemmas. 
\begin{lem}
\label{dale}
Define
\begin{equation*}
\tau^\rho_{\lambda,\epsilon}(\sigma)=\left\{
\begin{aligned}
&\inf\left\{t\ge 0\,:\,(x_t^\sigma,y_t^\sigma)\in \mathbb B(\lambda;\rho\epsilon)\times \mathbb B(\lambda;\epsilon)\right\}\,\,\text{if}\,\gaga \lambda-\lambda_{1}\drdr =\epsilon,\\
&\inf\left\{t\ge 0\,:\,(x_t^\sigma,y_t^\sigma)\in \mathbb B(\lambda;\epsilon)\times \mathbb B(\lambda;\rho\epsilon)\right\}\,\,\text{if}\,\gaga \lambda-\lambda_{2}\drdr =\epsilon,\\
&\inf\left\{t\ge 0\,:\,(x_t^\sigma,y_t^\sigma)\in \mathbb B(\lambda;\epsilon)\times \mathbb B(\lambda;\epsilon)\right\}\,\,\text{otherwise}\,.
\end{aligned}
\right.
\end{equation*}
Then, for any $\lambda\in\bRb^d$,  $0<\epsilon<\epsilon_0$, $0<\rho<1$ and for any $\delta>0$:

\begin{equation}
\label{merle}
\lim_{\sigma\to0}\PP\left\{\exp\left[\frac{2}{\sigma^2}\left(\widehat\h^\rho_\epsilon(\lambda)-\delta\right)\right]<
\tau^\rho_{\lambda,\epsilon}(\sigma)<\exp\left[\frac{2}{\sigma^2}\left(\widehat\h^\rho_\epsilon(\lambda)+\delta\right)\right]\right\}=1\,.
\end{equation}
Moreover \eqref{gouverneur2} still holds true with $(x_{\tau^\rho_{\lambda,\epsilon}(\sigma)},y_{\tau^\rho_{\lambda,\epsilon}(\sigma)})$ in place of $(x_{\widehat\tau^\rho_{\lambda,\epsilon}(\sigma)},y_{\widehat\tau^\rho_{\lambda,\epsilon}(\sigma)})$.
\end{lem}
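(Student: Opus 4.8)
The plan is to deduce Lemma~\ref{dale} from Lemma~\ref{negan} by showing that, in the zero-noise limit, the exit-time $\widehat{\tau}^\rho_{\lambda,\epsilon}(\sigma)$ out of $(\bRb^d\times\bRb^d)\setminus\mathcal O_{\lambda,\epsilon,\rho}$ and the entrance-time $\tau^\rho_{\lambda,\epsilon}(\sigma)$ into the small product of balls differ only by a time that is negligible on the relevant exponential scale. First I would observe that, by construction of $\mathcal O_{\lambda,\epsilon,\rho}$, its boundary consists precisely of pieces of $\partial\mathcal D^i_{\lambda,(\cdot)}$, and the part of this boundary on which the potential $(x,y)\mapsto\Psi_1(x)+\Psi_2(y)-\Psi_1(\lambda_1)-\Psi_2(\lambda_2)$ is minimal lies on $\partial\mathbb B(\lambda;\epsilon)$ (or $\partial\mathbb B(\lambda;\rho\epsilon)$ in the rescaled coordinate), as recorded in \eqref{cost1} and used in the proof of \eqref{gouverneur2}. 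Hence, by the persistence estimate \eqref{gouverneur2}, with probability tending to $1$ the process $(x^\sigma,y^\sigma)$ at time $\widehat{\tau}^\rho_{\lambda,\epsilon}(\sigma)$ is within $\delta$ of the product of balls defining $\tau^\rho_{\lambda,\epsilon}(\sigma)$, and in particular it has already entered a slightly enlarged such product of balls.

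Concretely, I would fix a small $\eta>0$ and compare $\tau^\rho_{\lambda,\epsilon}(\sigma)$ with the two exit-times $\widehat{\tau}^{\rho}_{\lambda,\epsilon-\eta}(\sigma)$ and $\widehat{\tau}^{\rho}_{\lambda,\epsilon+\eta}(\sigma)$ (adjusting the rescaled radius accordingly), which sandwich it: since entering $\mathbb B(\lambda;(\epsilon-\eta))$-type balls forces exit from the slightly smaller stable complement and entering the target balls is implied once one has exited the slightly larger one's boundary region, one gets
\begin{equation*}
\widehat{\tau}^{\rho}_{\lambda,\epsilon-\eta}(\sigma)\ \le\ \tau^\rho_{\lambda,\epsilon}(\sigma)\ \le\ \widehat{\tau}^{\rho}_{\lambda,\epsilon+\eta}(\sigma)\quad\text{(up to events of vanishing probability)}.
\end{equation*}
Applying \eqref{gouverneur1} to both bracketing exit-times and letting $\eta\downarrow0$ (using continuity of $\eta\mapsto\widehat\h^\rho_{\epsilon\pm\eta}(\lambda)$, which follows from the $\mathcal C^2$-regularity and uniform convexity of $\Psi_1,\Psi_2$) yields \eqref{merle}. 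Alternatively, and perhaps more cleanly, I would argue directly: on the event of \eqref{gouverneur1}--\eqref{gouverneur2}, $(x^\sigma,y^\sigma)$ reaches a $\delta$-neighborhood of $\partial\mathbb B(\lambda;\epsilon)\times\partial\mathbb B(\lambda;\epsilon)$ at time $\widehat{\tau}^\rho_{\lambda,\epsilon}(\sigma)$, so $\tau^\rho_{\lambda,\epsilon}(\sigma)\le\widehat{\tau}^\rho_{\lambda,\epsilon}(\sigma)$ automatically (entering the product of balls is no later than hitting that outer boundary — in fact one must pass through the balls first, by continuity of paths and the location of the starting point outside); for the reverse inequality one notes $\tau^\rho_{\lambda,\epsilon}(\sigma)\ge\widehat{\tau}^{\rho}_{\lambda,\epsilon-\eta}(\sigma)$ since the target balls are contained in the enlarged domain $\mathcal O_{\lambda,\epsilon-\eta,\rho}$, giving the lower bound in \eqref{merle} after passing $\eta\downarrow0$. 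The persistence statement for $(x^\sigma_{\tau^\rho_{\lambda,\epsilon}(\sigma)},y^\sigma_{\tau^\rho_{\lambda,\epsilon}(\sigma)})$ then follows because by path continuity this point lies on the boundary of the product of balls (the process starts strictly outside), so it is automatically within distance $0$ of $\mathbb B(\lambda;\epsilon)\times\mathbb B(\lambda;\epsilon)$, which trivially implies \eqref{gouverneur2}.

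The main obstacle I anticipate is making the sandwiching rigorous at the level of the exponential rates: one needs $\widehat\h^\rho_{\epsilon-\eta}(\lambda)$ and $\widehat\h^\rho_{\epsilon+\eta}(\lambda)$ to converge to $\widehat\h^\rho_\epsilon(\lambda)$ as $\eta\downarrow0$, uniformly enough that the $\delta$'s in \eqref{merle} can be absorbed, and one must handle with care the three cases in the definition of $\tau^\rho_{\lambda,\epsilon}(\sigma)$ (the boundary cases $\|\lambda-\lambda_i\|=\epsilon$) where rescaling to $\rho\epsilon$ is in force — here the enlargement $\eta$ must be taken small relative to $(1-\rho)\epsilon$ so that the geometric picture is preserved. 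A secondary subtlety is the direction "$\tau^\rho_{\lambda,\epsilon}(\sigma)\le\widehat{\tau}^\rho_{\lambda,\epsilon}(\sigma)$": one must verify that the process cannot exit $\mathcal O_{\lambda,\epsilon,\rho}$ through a part of $\partial\mathcal D^i$ strictly outside $\overline{\mathbb B(\lambda;\epsilon)}$ without first having entered the product of balls — but this is exactly the content of the persistence estimate \eqref{gouverneur2}, which says the exit concentrates on the ball-boundary part, so on the good event the inequality holds.
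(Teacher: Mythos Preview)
Your overall strategy---compare $\tau^\rho_{\lambda,\epsilon}(\sigma)$ with $\widehat\tau$-type times and invoke the persistence estimate~\eqref{gouverneur2}---is the same as the paper's, but the inequalities are set up in the wrong direction. The only almost-sure comparison available is $\widehat\tau^\rho_{\lambda,\epsilon}(\sigma)\le\tau^\rho_{\lambda,\epsilon}(\sigma)$, because the product of balls is \emph{contained} in $\mathcal O_{\lambda,\epsilon,\rho}$ (so entering the balls forces entry into $\mathcal O$, not the other way round). This gives the lower tail of~\eqref{merle} directly from~\eqref{gouverneur1}. Your claim that $\tau^\rho_{\lambda,\epsilon}(\sigma)\le\widehat\tau^\rho_{\lambda,\epsilon}(\sigma)$ ``automatically'' because ``one must pass through the balls first'' is false: the process starts outside $\mathcal O_{\lambda,\epsilon,\rho}$ and hits $\partial\mathcal O_{\lambda,\epsilon,\rho}$ first, possibly on a part of $\partial\mathcal D^i$ far from $\overline{\mathbb B(\lambda;\epsilon)}$.

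Your $\epsilon$-sandwich also breaks down: the inclusion $\mathbb B(\lambda;\epsilon)\subset\mathcal D^i_{\lambda,\epsilon-\eta}$ needed for ``$\widehat\tau^\rho_{\lambda,\epsilon-\eta}\le\tau^\rho_{\lambda,\epsilon}$ a.s.'' is not true in general (the ascending-flow orbit of $\overline{\mathbb B(\lambda;\epsilon-\eta)}$ need not cover the whole of $\mathbb B(\lambda;\epsilon)$, in particular the part of that ball closer to $\lambda_i$), and the reverse bound ``$\tau^\rho_{\lambda,\epsilon}\le\widehat\tau^\rho_{\lambda,\epsilon+\eta}$'' has no obvious justification either. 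The paper instead varies the \emph{rescaling} parameter: for $0<\xi<\rho$ one writes
\[
\mathbb P\Big\{\tau^\rho_{\lambda,\epsilon}(\sigma)\ge e^{\frac{2}{\sigma^2}(\widehat h^\rho_\epsilon(\lambda)+\delta)}\Big\}
\le \mathbb P\Big\{\widehat\tau^\xi_{\lambda,\epsilon}(\sigma)\ge e^{\frac{2}{\sigma^2}(\widehat h^\rho_\epsilon(\lambda)+\delta)}\Big\}
+\mathbb P\big\{\widehat\tau^\xi_{\lambda,\epsilon}(\sigma)<\tau^\rho_{\lambda,\epsilon}(\sigma)\big\}.
\]
The first term vanishes by~\eqref{gouverneur1} once $\xi$ is close enough to $\rho$ that $\widehat h^\xi_\epsilon(\lambda)<\widehat h^\rho_\epsilon(\lambda)+\delta/2$ (continuity in $\xi$). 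For the second term, on the event $\{\widehat\tau^\xi_{\lambda,\epsilon}(\sigma)<\tau^\rho_{\lambda,\epsilon}(\sigma)\}$ the exit point $(x^\sigma,y^\sigma)_{\widehat\tau^\xi_{\lambda,\epsilon}(\sigma)}$ lies outside $\overline{\mathbb B(\lambda;\epsilon)}\times\overline{\mathbb B(\lambda;\epsilon)}$, and \eqref{gouverneur2} (applied with parameter $\xi$) makes this negligible. This is exactly the persistence argument you describe in your last paragraph, but attached to the correct inequality and to the correct perturbation parameter. The location statement is then immediate, since by definition $(x^\sigma,y^\sigma)_{\tau^\rho_{\lambda,\epsilon}(\sigma)}$ already lies on the boundary of the product of balls.
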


\begin{proof}
Since ${\mathbb B}(\lambda;\epsilon)$ and ${\mathbb B}(\lambda;\rho\epsilon)$ are contained in each $\mathcal D_{\lambda,\epsilon}^i$, necessarily the inequality~$\tau^\rho_{\lambda,\epsilon}(\sigma)\ge \widehat{\tau}^\rho_{\lambda,\epsilon}(\sigma)$ holds almost surely. Since \eqref{gouverneur1} ensures that
\[
\lim_{\sigma\to0}\PP\left\{\widehat{\tau}^\rho_{\lambda,\epsilon}(\sigma)\le \exp\left[\frac{2}{\sigma^2}\left(\widehat \h^\rho_\epsilon(\lambda)-\delta\right)\right]\right\}=0\,,
\]
the lower tail in \eqref{merle} follows. To establish the upper-tail 
\[
\lim_{\sigma\to0}\PP\left\{\tau^\rho_{\lambda,\epsilon}(\sigma)<\exp\left[\frac{2}{\sigma^2}\left(\widehat \h^\rho_\epsilon(\lambda)+\delta\right)\right]\right\}=1\,,
\]
fix $\delta>0$, let $\xi>0$ be smaller than $\rho$ and use the inequality:
\begin{align*}
&\mathbb P\left\{ \tau^\rho_{\lambda,\epsilon}(\sigma)\ge \exp\left[\frac{2}{\sigma^2}\left(\widehat\h^\rho_\epsilon(\lambda)+\delta\right)\right]\right\}\\
&\le \mathbb P\left\{\widehat{\tau}^\xi_{\lambda,\epsilon}(\sigma)\ge  \exp\left[\frac{2}{\sigma^2}\left(\widehat\h^\rho_\epsilon(\lambda)+\delta\right)\right] \right\}\\
&+\mathbb P\left\{ \tau^\rho_{\lambda,\epsilon}(\sigma)\ge \exp\left[\frac{2}{\sigma^2}\left(\widehat\h^\rho_\epsilon(\lambda)+\delta\right)\right],\,\widehat{\tau}^\xi_{\lambda,\epsilon}(\sigma)< \tau^\rho_{\lambda,\epsilon}(\sigma)\right\}\\
&\le \mathbb P\left\{\widehat{\tau}^\xi_{\lambda,\epsilon}(\sigma)\ge  \exp\left[\frac{2}{\sigma^2}\left(\widehat\h^\rho_\epsilon(\lambda)+\delta\right)\right] \right\}
+\mathbb P\left\{\widehat{\tau}^\xi_{\lambda,\epsilon}(\sigma)< \tau^\rho_{\lambda,\epsilon}(\sigma)\right\}.
\end{align*}
Observing that $\eta\mapsto \widehat\h^\eta_\epsilon(\lambda)$ is continuous, we can choose $\xi$ close enough to $\rho$ so that~$\widehat\h^\rho_\epsilon(\lambda)>\widehat\h^\xi_\epsilon(\lambda)-\delta/2$. This way, the event $\left\{\widehat{\tau}^\xi_{\lambda,\epsilon}(\sigma)\ge  \exp\left[\frac{2}{\sigma^2}\left(\widehat\h^\rho_\epsilon(\lambda)+\delta\right)\right]\right\}$ is included into the event $\left\{\widehat{\tau}^\xi_{\lambda,\epsilon}(\sigma)\ge  \exp\left[\frac{2}{\sigma^2}\left(\widehat\h^\xi_\epsilon(\lambda)+\frac{\delta}{2}\right)\right]\right\}$ and the upper-tail estimate for $\widehat{\tau}^\xi_{\lambda,\epsilon}(\sigma)$ in~\eqref{gouverneur1} ensures that $\mathbb P\left\{\widehat{\tau}^\xi_{\lambda,\epsilon}(\sigma)\ge  \exp\left[\frac{2}{\sigma^2}\left(\widehat\h^\rho_\epsilon(\lambda)+\delta\right)\right] \right\}$ vanishes as $\sigma$ tends to~$0$. For the remaining component, the event $\{\widehat{\tau}^\xi_{\lambda,\epsilon}(\sigma)< \tau^\rho_{\lambda,\epsilon}(\sigma)\}$ implies that the vector $\big(x^\sigma_{\widehat{\tau}^\xi_{\lambda,\epsilon}(\sigma)},y^\sigma_{\widehat{\tau}^\xi_{\lambda,\epsilon}(\sigma)}\big)$ does not belong to $\overline{\mathbb B(\lambda;\epsilon)}\times\overline{\mathbb B(\lambda;\epsilon)}$. Recalling~\eqref{gouverneur2} from Lemma \ref{negan}, this event becomes negligible as $\sigma\downarrow 0$ and so
$\mathbb P\left\{\widehat{\tau}^\xi_{\lambda,\epsilon}(\sigma)< \tau^\rho_{\lambda,\epsilon}(\sigma)\right\}$ vanishes as $\sigma$ tends to $0$.
 
  The persistence of the first collision-location $(x_{\tau^\rho_{\lambda,\epsilon}(\sigma)},y_{\tau^\rho_{\lambda,\epsilon}(\sigma)})$ is a straightforward consequence of the very definition of $\tau_{\lambda,\epsilon}^\rho(\sigma)$.
\end{proof}

 \begin{lem}
\label{dale-bis}
Let $\tau_{\lambda,\epsilon}(\sigma)$ be defined as in \eqref{andrea}.
For any $\lambda\in\bRb^d$ and $0<\epsilon<\epsilon_0$, it holds: for any $\delta>0$,
\begin{equation}\label{shane}
\lim_{\sigma\to0}\PP\left\{\exp\left[\frac{2}{\sigma^2}\left(\widehat\h_\epsilon(\lambda)-\delta\right)\right]<
\tau_{\lambda,\epsilon}(\sigma)<\exp\left[\frac{2}{\sigma^2}\left(\widehat\h_\epsilon(\lambda)+\delta\right)\right]\right\}=1\,,
\end{equation}
for $\widehat\h_\epsilon$ given by

\begin{align}\label{gouverneur3}
\widehat\h_\epsilon(\lambda)&:=\lim_{\rho\rightarrow 1}\widehat\h^\rho_{\epsilon}(\lambda)\nonumber\\
&=\left\{
\begin{aligned}
&\inf_{y\in\partial \mathbb B(\lambda;\epsilon)}\big(\Psi_2(y)-\Psi_2(\lambda_2)\big)\,\mbox{if}\,\,||\lambda-\lambda_1||< \epsilon\,,\\
&\inf_{x\in\partial \mathbb B(\lambda;\epsilon)}\big(\Psi_1(x)-\Psi_1(\lambda_1)\big)\,\mbox{if}\,\,||\lambda-\lambda_2||< \epsilon\,\\
&\inf_{x\in\partial \mathbb B(\lambda;\epsilon)}\big(\Psi_1(x)-\Psi_1(\lambda_1)\big)+\inf_{y\in\partial \mathbb B(\lambda;\epsilon)}\big(\Psi_2(y)-\Psi_2(\lambda_2)\big)\,\mbox{if}\,\,\min_{i=1,2}||\lambda-\lambda_i||\ge \epsilon\,.	
\end{aligned}
\right. 
\end{align}
Additionally, 
\begin{equation}
\label{gouverneur4}
\lim_{\sigma\to0}\PP\left\{{\rm dist}\left((x^\sigma_{\tau_{\lambda,\epsilon}(\sigma)},y^\sigma_{\tau_{\lambda,\epsilon}(\sigma)}),\mathbb{B}(\lambda;\epsilon)\times\mathbb{B}(\lambda;\epsilon)\right)\leq\delta\right\}=1\,.
\end{equation}
\end{lem}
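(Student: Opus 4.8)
The plan is to derive \eqref{shane} from Lemma~\ref{dale} by squeezing $\tau_{\lambda,\epsilon}(\sigma)$ between two of the exit-times of that lemma, using (i) the monotonicity of hitting times under inclusion of target sets and (ii) the continuity of the exit-costs $\rho\mapsto\widehat\h^{\rho}_{\epsilon}(\lambda)$ and $\epsilon\mapsto\widehat\h_{\epsilon}(\lambda)$. The key elementary observation is that $\tau_{\lambda,\epsilon}(\sigma)$ is the hitting time of the \emph{closed} product ball $\overline{\mathbb B(\lambda;\epsilon)}\times\overline{\mathbb B(\lambda;\epsilon)}$, whereas each $\tau^{\rho}_{\lambda,\epsilon}(\sigma)$ from Lemma~\ref{dale} is the hitting time of a product of open balls, one of which is possibly shrunk to radius $\rho\epsilon<\epsilon$.

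For the upper tail I would argue as follows. In each of the three branches defining $\tau^{\rho}_{\lambda,\epsilon}(\sigma)$ the associated open product ball is contained in $\overline{\mathbb B(\lambda;\epsilon)}\times\overline{\mathbb B(\lambda;\epsilon)}$, so $\tau_{\lambda,\epsilon}(\sigma)\le\tau^{\rho}_{\lambda,\epsilon}(\sigma)$ almost surely for every $\rho\in(0,1)$. Fixing $\delta>0$, since $\widehat\h^{\rho}_{\epsilon}(\lambda)\to\widehat\h_{\epsilon}(\lambda)$ as $\rho\uparrow1$ — which is precisely \eqref{gouverneur3} — I can choose $\rho$ so close to $1$ that $\widehat\h^{\rho}_{\epsilon}(\lambda)<\widehat\h_{\epsilon}(\lambda)+\delta/2$, and the upper bound in \eqref{merle}, applied with $\delta/2$, then forces $\tau_{\lambda,\epsilon}(\sigma)\le\tau^{\rho}_{\lambda,\epsilon}(\sigma)<\exp[\tfrac{2}{\sigma^2}(\widehat\h_{\epsilon}(\lambda)+\delta)]$ with probability tending to $1$.

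For the lower tail I would instead use an \emph{enlarged} radius $\epsilon'$. For any $\epsilon'\in(\epsilon,\epsilon_0)$ one has $\overline{\mathbb B(\lambda;\epsilon)}\times\overline{\mathbb B(\lambda;\epsilon)}\subset\mathbb B(\lambda;\epsilon')\times\mathbb B(\lambda;\epsilon')$, hence $\tau_{\lambda,\epsilon}(\sigma)\ge\inf\{t\ge0:(x^{\sigma}_t,y^{\sigma}_t)\in\mathbb B(\lambda;\epsilon')\times\mathbb B(\lambda;\epsilon')\}$; choosing $\epsilon'$ so that $\gaga\lambda-\lambda_i\drdr\ne\epsilon'$ for $i=1,2$ (possible since only the two values $\gaga\lambda-\lambda_1\drdr$ and $\gaga\lambda-\lambda_2\drdr$ are excluded), the right-hand side is exactly $\tau^{\rho}_{\lambda,\epsilon'}(\sigma)$ in the ``otherwise'' branch of Lemma~\ref{dale}, whose cost $\widehat\h^{\rho}_{\epsilon'}(\lambda)$ equals $\widehat\h_{\epsilon'}(\lambda)$ independently of $\rho$ (cf.\ \eqref{cost1} and \eqref{gouverneur3}). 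I would then check from the explicit expression \eqref{gouverneur3} that $\epsilon\mapsto\widehat\h_{\epsilon}(\lambda)$ is continuous — the infima of $\Psi_i-\Psi_i(\lambda_i)$ over the spheres $\partial\mathbb B(\lambda;\epsilon)$ depend continuously on $\epsilon$ by compactness, and the three branches match up across the thresholds $\gaga\lambda-\lambda_i\drdr=\epsilon$ because there $\lambda_i\in\partial\mathbb B(\lambda;\epsilon)$, so $\inf_{\partial\mathbb B(\lambda;\epsilon)}(\Psi_i-\Psi_i(\lambda_i))=0$. Given $\delta>0$, I pick $\epsilon'>\epsilon$ with $\widehat\h_{\epsilon'}(\lambda)>\widehat\h_{\epsilon}(\lambda)-\delta/2$, and the lower bound in \eqref{merle} (with $\epsilon'$ in place of $\epsilon$ and $\delta/2$ in place of $\delta$) yields $\tau_{\lambda,\epsilon}(\sigma)\ge\tau^{\rho}_{\lambda,\epsilon'}(\sigma)>\exp[\tfrac{2}{\sigma^2}(\widehat\h_{\epsilon}(\lambda)-\delta)]$ with probability tending to $1$, which completes \eqref{shane}. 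The persistence claim \eqref{gouverneur4} is then immediate: by path continuity, on the event $\{\tau_{\lambda,\epsilon}(\sigma)<\infty\}$ — of probability tending to $1$ by \eqref{shane} — the point $(x^{\sigma}_{\tau_{\lambda,\epsilon}(\sigma)},y^{\sigma}_{\tau_{\lambda,\epsilon}(\sigma)})$ lies in $\overline{\mathbb B(\lambda;\epsilon)}\times\overline{\mathbb B(\lambda;\epsilon)}$, whose distance to $\mathbb B(\lambda;\epsilon)\times\mathbb B(\lambda;\epsilon)$ vanishes.

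I do not expect a serious obstacle here: the statement is essentially a two-sided comparison wrapped around Lemma~\ref{dale}. The only delicate bookkeeping is to let the shrinking parameter $\rho\uparrow1$ handle the upper tail (using the $\rho$-continuity already recorded in the proof of Lemma~\ref{dale}) while the enlarging parameter $\epsilon'\downarrow\epsilon$ handles the lower tail, taking care that $\epsilon'$ is chosen away from the threshold radii $\gaga\lambda-\lambda_i\drdr=\epsilon'$ so that Lemma~\ref{dale} applies in its simplest branch; the continuity of $\epsilon\mapsto\widehat\h_{\epsilon}(\lambda)$ across those thresholds is the one small computation to dispatch.
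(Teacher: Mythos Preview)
Your argument is correct. The upper-tail step is exactly the paper's: use $\tau_{\lambda,\epsilon}(\sigma)\le\tau^\rho_{\lambda,\epsilon}(\sigma)$ and let $\rho\uparrow1$ via the continuity already recorded in~\eqref{gouverneur3}. The persistence claim~\eqref{gouverneur4} is also handled identically (the paper just calls it ``straightforward'').

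For the lower tail you take a genuinely different route. The paper splits into cases: when $\min_i\gaga\lambda-\lambda_i\drdr\neq\epsilon$ it simply identifies $\tau_{\lambda,\epsilon}(\sigma)$ with $\tau^\rho_{\lambda,\epsilon}(\sigma)$ in the ``otherwise'' branch and quotes Lemma~\ref{dale} directly; when $\gaga\lambda-\lambda_i\drdr=\epsilon$ (say $i=1$) it drops the constraint on $x^\sigma$ altogether, bounds $\tau_{\lambda,\epsilon}(\sigma)\ge\tilde\tau_{\lambda,\epsilon}(\sigma):=\inf\{t:y^\sigma_t\in\mathbb B(\lambda;\epsilon)\}$, and applies the one-dimensional Freidlin--Wentzell estimate to $\tilde\tau$, whose cost is exactly $\inf_{\partial\mathbb B(\lambda;\epsilon)}(\Psi_2-\Psi_2(\lambda_2))=\widehat h_\epsilon(\lambda)$. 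Your approach instead enlarges the radius to a generic $\epsilon'\in(\epsilon,\epsilon_0)$ avoiding the two threshold values, so that Lemma~\ref{dale} applies in its ``otherwise'' branch with a $\rho$-independent cost $\widehat h_{\epsilon'}(\lambda)$, and then lets $\epsilon'\downarrow\epsilon$ using the continuity of $\epsilon\mapsto\widehat h_\epsilon(\lambda)$ that you verify. This buys you a uniform argument with no case distinction, at the price of checking that the three branches of~\eqref{gouverneur3} match across the thresholds $\gaga\lambda-\lambda_i\drdr=\epsilon$ (your observation that $\lambda_i\in\partial\mathbb B(\lambda;\epsilon)$ forces $\inf_{\partial\mathbb B(\lambda;\epsilon)}(\Psi_i-\Psi_i(\lambda_i))=0$ there). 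The paper's case split is slightly more direct but needs the auxiliary one-sided hitting time; both are short.
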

\begin{proof}  
The estimate \eqref{gouverneur4} is again straightforward.

By definition, $\tau_{\lambda,\epsilon}(\sigma)\le \tau^\rho_{\lambda,\epsilon}(\sigma)$ almost surely, and so, for any $\delta'>0$,
\[
\lim_{\sigma\rightarrow 0}\mathbb P\left\{\tau_{\lambda,\epsilon}(\sigma)<\exp\left[\frac 2{\sigma^2}(h^\rho_\epsilon(\lambda)+\delta')\right]\right\} 
=1\,.
\] 
Also as $\lim_{\rho\rightarrow 1}\widehat\h^\rho_\epsilon(\lambda)=\widehat\h_\epsilon(\lambda)$, taking $\delta>0$ arbitrary, and choosing $\rho,\delta'$ small enough so that $\widehat\h^\rho_\epsilon(\lambda)+\delta'\le \widehat\h_\epsilon(\lambda)+\delta$ yields the upper-tail:
\[
\lim_{\sigma\rightarrow 0}\mathbb P\left\{\tau_{\lambda,\epsilon}(\sigma)<\exp\left[\frac 2{\sigma^2}(\widehat\h_\epsilon(\lambda)+\delta)\right]\right\}=1\,.
\] 
For the lower-tail: 
\[
\lim_{\sigma\rightarrow 0}\mathbb P\left\{\tau_{\lambda,\epsilon}(\sigma)>\exp\left[\frac 2{\sigma^2}(\widehat\h_\epsilon(\lambda)-\delta)\right]\right\}=1\,,
\] 
let us consider the situation $||\lambda-\lambda_1||=\epsilon$ which implies that $||\lambda-\lambda_2||>\epsilon$, $\mathbb R^d\setminus\mathcal D^2_{\lambda,\epsilon}$ is stable by $-\nabla\Psi_2$ and $\widehat\h_\epsilon(\lambda)$ reduces to $\inf_{y\in\partial\mathbb B(\lambda;\epsilon)}\big(\Psi_2(y)-\Psi_2(\lambda_2)\big)$. 
Observing that $\tau_{\lambda,\epsilon}(\sigma)$ is greater or equal to $\tilde \tau_{\lambda,\epsilon}(\sigma):=\inf\{t\ge 0\,:\,y_t^\sigma\in\mathbb B(\lambda;\epsilon)\}$, the lower-tail estimate follows from the inequality 
\begin{align*}
&\mathbb P\left\{\tau_{\lambda,\epsilon}(\sigma)>\exp\left[\frac 2{\sigma^2}(\widehat\h_\epsilon(\lambda)-\delta)\right] \right\}\\
&\ge 
\mathbb P\left\{\tilde \tau_{\lambda,\epsilon}(\sigma)>\exp\left[\frac 2{\sigma^2}\bigg(\inf_{y\in\partial\mathbb B(\lambda;\epsilon)}(\Psi_2(y)-\Psi_2(\lambda_2))-\delta\bigg) \right]\right\}
\end{align*}
and, by applying Theorem \ref{thm:KramersDZ} to $\tilde \tau_{\lambda,\epsilon}(\sigma)$. 

\noindent
Following the same reasoning in the case $||\lambda-\lambda_2||=\epsilon$, the claim follows. Finally, whenever $||\lambda-\lambda_1||\neq\epsilon$ and $||\lambda-\lambda_2||\neq\epsilon$, $\tau_{\lambda,\epsilon}(\sigma)$ simply reduces to $\widehat{\tau}^\rho_{\lambda,\epsilon}(\sigma)$.
\end{proof}

\subsection{Asymptotic estimates for $c_\epsilon(\sigma)$}\label{subsec:Linear-collisionB}
Following Lemma \ref{dale-bis}, the characteristics for the Kramers' type law of $c_\epsilon(\sigma)$ can be drawn heuristically: assuming that \eqref{shane} and \eqref{gouverneur4} are stable by minimization over the intermediate points $\lambda$, the exit-cost governing the asymptotic~$\sigma\downarrow 0$ would be given by $\inf_\lambda\widehat\h_\epsilon(\lambda)$ and the exit-location $(x^\sigma_{c_\epsilon(\sigma)},y^\sigma_{c_\epsilon(\sigma)})$ would concentrate on the domains $\mathbb B(\lambda_\epsilon;\epsilon)\times \mathbb B(\lambda_\epsilon;\epsilon)$ where $\lambda_\epsilon$ belongs to the set of minimizers of $\widehat\h_{\epsilon}$. This set possibly contains multiple elements and, in view of \eqref{gouverneur3} can be split into three main subsets: the family of minimizers belonging to $\mathbb B(\lambda_1;\epsilon)$ or to $\mathbb B(\lambda_2;\epsilon)$ 
  - the respective minima of $\widehat\h_{\epsilon}$ being given by
 \[
 m_{1,\epsilon}:=\inf_{\lambda \in \mathbb B(\lambda_1;\epsilon)}\inf_{y\in\partial \mathbb B(\lambda;\epsilon)}\big(\Psi_2(y)-\Psi_2(\lambda_2)\big),\:\:\: m_{2,\epsilon}:=\inf_{\lambda \in \mathbb B(\lambda_2;\epsilon)}\inf_{x\in\partial \mathbb B(\lambda;\epsilon)}\big(\Psi_1(x)-\Psi_1(\lambda_1)\big),
 \]
 and the family of minimizers belonging to $\mathbb R^d\setminus \big(\mathbb B(\lambda_1;\epsilon)\cup\mathbb B(\lambda_2;\epsilon)\big) $ -  the corresponding minima being given by $m_\epsilon:=\inf_{\{\lambda\::\:\inf_i||\lambda-\lambda_i||\ge \epsilon\}} h_\epsilon(\lambda)$ for $h_\epsilon$ being itself given by
 \[
 \h_\epsilon(\lambda):=\inf_{x\in\partial \mathbb B(\lambda;\epsilon)}\big(\Psi_1(x)-\Psi_1(\lambda_1)\big)+ \inf_{y\in\partial \mathbb B(\lambda;\epsilon)}\big(\Psi_2(y)-\Psi_2(\lambda_2)\big)\,.
 \]
Provided that $\epsilon$ is small enough, the latter will predominate over the two others and will specifically drive the persistence of $(x^\sigma_{c_\epsilon(\sigma)},y^\sigma_{c_\epsilon(\sigma)})$. Indeed, as $\epsilon\downarrow 0$, $\widehat \h_\epsilon$ converges, on any compact, to the function 
$$
\h_0(\lambda):=\left(\Psi_1(\lambda)-\Psi_1(\lambda_1)\right)+\left(\Psi_2(\lambda)-\Psi_2(\lambda_2)\right)\,,
$$
The potentials $\Psi_1$ and $\Psi_2$ being uniformly  
convex, $\h_0$ is uniformly convex as well.
As such, $\lim_{\epsilon \rightarrow 0}\inf_\lambda \widehat\h_{\epsilon}(\lambda)=\inf_\lambda \h_{0}(\lambda)$. Additionally, the set of minimizers of $\widehat\h_\epsilon$ converges to a unique point, $\text{argmin}\,\h_0$, which as $\Psi_1$ and $\Psi_2$ are of class $\mathcal C^2$, is explicitly given by~$(\nabla \Psi_1+\nabla \Psi_2)^{-1}(0)$. Observing further that $\inf_\lambda \h_0$ is strictly smaller than $\lim_{\epsilon\rightarrow 0}m_{1,\epsilon}=\Psi_1(\lambda_2)-\Psi_1(\lambda_1)$ and than $\lim_{\epsilon\rightarrow 0}m_{2,\epsilon}=\Psi_2(\lambda_1)-\Psi_2(\lambda_2)$, we can introduce the (positive) threshold
\begin{equation*}
 \epsilon_c:=\inf\left\{\epsilon\,\in\,(0,\epsilon_0)\,:\,
 m_{i,\epsilon}=\inf_\lambda \h_\epsilon(\lambda),\,\,i\in\{1,2\}\right\}
 \end{equation*} 
 which corresponds to the smallest radius for which the exit-cost $\inf_\lambda\widehat\h_\epsilon(\lambda)$ reduces into $\inf_{\lambda}\h_\epsilon(\lambda)$. Let us also notice for any $\epsilon<\epsilon_c$, $\inf_{\lambda} h_\epsilon(\lambda)=\inf_{\{\lambda\::\:\inf_i||\lambda-\lambda_i||\ge \epsilon\}} h_\epsilon(\lambda)$.
 
 Following this preliminary discussion, we can state the two main results of this section.   
\begin{prop}
\label{lacollision} For $\epsilon<\epsilon_c$ and for $\underline{\h}_{\epsilon}=\inf_{\lambda \in \bRb^d}\h_{\epsilon}(\lambda)$, for any $\delta>0$, we have:

\begin{equation}
\label{tdog}
\lim_{\sigma\to0}\PP\left\{\exp\left[\frac{2}{\sigma^2}\left(\underline{\h}_{\epsilon} -\delta\right)\right]<
c_\epsilon(\sigma)<\exp\left[\frac{2}{\sigma^2}\left(\underline{\h}_{\epsilon}+\delta\right)\right]\right\}=1\,.
\end{equation}

In addition,  for $\mathcal M_\epsilon$ the set of minimizers of $\lambda\mapsto \h_{\epsilon}(\lambda)$, it holds:
\begin{equation}
\label{tdog2}
\lim_{\sigma\to0}\PP\left\{\inf_{\lambda_\epsilon\in\mathcal M_\epsilon}\max\bigg({\rm dist}\big(x^\sigma_{c_\epsilon(\sigma)},\mathbb B(\lambda_\epsilon;\epsilon)\big),{\rm dist}\big(y^\sigma_{c_\epsilon(\sigma)},\mathbb B(\lambda_\epsilon;\epsilon) \big)\bigg)\ge \delta\right\}=0\,.
\end{equation}
 \end{prop}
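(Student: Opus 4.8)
The plan is to transfer the pointwise-in-$\lambda$ estimates of Lemma~\ref{dale-bis} to $c_\epsilon(\sigma)=\inf_\lambda\tau_{\lambda,\epsilon}(\sigma)$ by a compactness argument. Throughout I use that, for $\epsilon<\epsilon_c$, $\underline{\h}_{\epsilon}=\inf_\lambda\h_\epsilon(\lambda)=\inf_\lambda\widehat\h_\epsilon(\lambda)$, that $\h_\epsilon$ is coercive (uniform convexity of $\Psi_1,\Psi_2$), so that $\mathcal M_\epsilon$ is nonempty, compact and, by the remark preceding the statement, contained in $\{\lambda:\inf_i\|\lambda-\lambda_i\|\ge\epsilon\}$, a set on which $\widehat\h_\epsilon=\h_\epsilon$; combined with the strict gap $\underline{\h}_{\epsilon}<\min(m_{1,\epsilon},m_{2,\epsilon})$ valid for $\epsilon<\epsilon_c$, this makes $\mathcal M_\epsilon$ precisely the (compact) set of minimizers of the continuous function $\widehat\h_\epsilon$ over $\bRb^d$. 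The upper tail in \eqref{tdog} is then immediate: fixing $\lambda_0\in\mathcal M_\epsilon$, we have $c_\epsilon(\sigma)\le\tau_{\lambda_0,\epsilon}(\sigma)$ a.s. and $\widehat\h_\epsilon(\lambda_0)=\underline{\h}_{\epsilon}$, so the upper tail of \eqref{shane} gives $\PP\{c_\epsilon(\sigma)<\exp[\tfrac{2}{\sigma^2}(\underline{\h}_{\epsilon}+\delta)]\}\to1$.

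Next I confine the two flows. Applying Theorem~\ref{thm:KramersDZ} (in the localized form already used for Lemmas~\ref{negan}--\ref{dale-bis}) to $x^\sigma$ on $\mathbb B(\lambda_1;M)$ --- which is stable by $-\nabla\Psi_1$ and contains only the minimizer $\lambda_1$ --- together with the fact that uniform convexity forces $\inf_{x\in\partial\mathbb B(\lambda_1;M)}(\Psi_1(x)-\Psi_1(\lambda_1))\to\infty$ as $M\uparrow\infty$, one fixes $M=M(\delta)$ so large that, with probability tending to $1$, $x^\sigma$ does not leave $\mathbb B(\lambda_1;M)$ and $y^\sigma$ does not leave $\mathbb B(\lambda_2;M)$ before the time $\exp[\tfrac{2}{\sigma^2}(\underline{\h}_{\epsilon}+\delta)]$. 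Call $\mathcal E_\sigma$ this event and put $\mathcal K:=\overline{\mathbb B(\tfrac12(\lambda_1+\lambda_2);M+\epsilon_0)}$. On $\mathcal E_\sigma$, as long as $c_\epsilon(\sigma)$ is smaller than that time, the points $x^\sigma_{c_\epsilon(\sigma)},y^\sigma_{c_\epsilon(\sigma)}$ and their midpoint $\lambda_{\epsilon,\sigma}:=\tfrac12(x^\sigma_{c_\epsilon(\sigma)}+y^\sigma_{c_\epsilon(\sigma)})$ all lie in $\mathcal K$, and $x^\sigma_{c_\epsilon(\sigma)},y^\sigma_{c_\epsilon(\sigma)}\in\overline{\mathbb B(\lambda_{\epsilon,\sigma};\epsilon)}$ by definition of $c_\epsilon(\sigma)$.

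The core step is a covering argument upgrading Lemma~\ref{dale-bis} to a uniform statement on $\mathcal K$. Given $\delta$, choose $\eta>0$ with $\epsilon+\eta<\min(\epsilon_0,\epsilon_c)$ small enough that $\widehat\h_{\epsilon+\eta}\ge\widehat\h_\epsilon-\delta/4$ on $\mathcal K$ (joint continuity of $(\lambda,\epsilon)\mapsto\widehat\h_\epsilon(\lambda)$) and $\underline{\h}_{\epsilon+\eta}\ge\underline{\h}_{\epsilon}-\delta/4$, and take a finite $\eta$-net $\lambda^\ast_1,\dots,\lambda^\ast_K$ of $\mathcal K$. On $\mathcal E_\sigma\cap\{c_\epsilon(\sigma)\le\exp[\tfrac{2}{\sigma^2}(\underline{\h}_{\epsilon}-\delta)]\}$, letting $\lambda^\ast_k$ be the net point nearest $\lambda_{\epsilon,\sigma}$ one has $x^\sigma_{c_\epsilon(\sigma)},y^\sigma_{c_\epsilon(\sigma)}\in\mathbb B(\lambda^\ast_k;\epsilon+\eta)$, hence $\tau_{\lambda^\ast_k,\epsilon+\eta}(\sigma)\le c_\epsilon(\sigma)\le\exp[\tfrac{2}{\sigma^2}(\underline{\h}_{\epsilon}-\delta)]\le\exp[\tfrac{2}{\sigma^2}(\widehat\h_{\epsilon+\eta}(\lambda^\ast_k)-\tfrac{\delta}{2})]$, using $\widehat\h_{\epsilon+\eta}(\lambda^\ast_k)\ge\underline{\h}_{\epsilon+\eta}\ge\underline{\h}_{\epsilon}-\delta/4$. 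By the lower tail of \eqref{shane} applied to each of the finitely many $\tau_{\lambda^\ast_k,\epsilon+\eta}(\sigma)$ (whose cost is $\widehat\h_{\epsilon+\eta}(\lambda^\ast_k)$), this event has probability tending to $0$; together with $\PP(\mathcal E_\sigma^c)\to0$ this yields the lower tail in \eqref{tdog}. For the persistence \eqref{tdog2}, since $x^\sigma_{c_\epsilon(\sigma)},y^\sigma_{c_\epsilon(\sigma)}\in\overline{\mathbb B(\lambda_{\epsilon,\sigma};\epsilon)}$ the event there is contained in $\{{\rm dist}(\lambda_{\epsilon,\sigma},\mathcal M_\epsilon)>\delta\}$; as $\mathcal M_\epsilon$ is the minimizer set of the continuous $\widehat\h_\epsilon$ and $\mathcal K$ is compact, there is $\zeta=\zeta(\delta)>0$ with $\widehat\h_\epsilon\ge\underline{\h}_{\epsilon}+\zeta$ on $\{\lambda\in\mathcal K:{\rm dist}(\lambda,\mathcal M_\epsilon)\ge\delta/2\}$. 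Re-running the covering argument with $\eta$ also small enough that $\widehat\h_{\epsilon+\eta}\ge\widehat\h_\epsilon-\zeta/4$ on $\mathcal K$ and $\eta<\delta/2$, and intersecting with $\mathcal E_\sigma\cap\{c_\epsilon(\sigma)<\exp[\tfrac{2}{\sigma^2}(\underline{\h}_{\epsilon}+\delta_\ast)]\}$, where $\delta_\ast:=\min(\delta,\zeta)/4$ (a set of probability tending to $1$ by the upper tail already established), the nearest net point $\lambda^\ast_k$ satisfies ${\rm dist}(\lambda^\ast_k,\mathcal M_\epsilon)\ge\delta/2$, hence $\widehat\h_{\epsilon+\eta}(\lambda^\ast_k)\ge\underline{\h}_{\epsilon}+\zeta/2$ and $\tau_{\lambda^\ast_k,\epsilon+\eta}(\sigma)\le c_\epsilon(\sigma)<\exp[\tfrac{2}{\sigma^2}(\underline{\h}_{\epsilon}+\delta_\ast)]\le\exp[\tfrac{2}{\sigma^2}(\widehat\h_{\epsilon+\eta}(\lambda^\ast_k)-\delta_\ast)]$; the lower tail of \eqref{shane} then kills each such event, and summing over $k$ gives \eqref{tdog2}.

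The step I expect to be the main obstacle is precisely this transfer from the pointwise Lemma~\ref{dale-bis} to a statement uniform over the continuum of candidate collision-locations $\lambda$. It relies on two ingredients that must be set up with care: an \emph{a priori} confinement of $x^\sigma$ and $y^\sigma$ to a fixed compact set over the whole $\exp(O(\sigma^{-2}))$ horizon --- where uniform convexity of $\Psi_1,\Psi_2$ is decisive, since it makes the exit-cost of a large ball arbitrarily large --- and a quantitative handling of the two unavoidable losses, the one from discretising $\lambda$ into a finite net and the one from enlarging the collision radius $\epsilon$ into $\epsilon+\eta$ (so that $x^\sigma_{c_\epsilon(\sigma)},y^\sigma_{c_\epsilon(\sigma)}$, only $\eta$-close to a net point, still sit in the $(\epsilon+\eta)$-ball about it). Both losses are absorbed via the joint continuity of $(\lambda,\epsilon)\mapsto\widehat\h_\epsilon(\lambda)$ together with the strict gap $\underline{\h}_{\epsilon}<\min(m_{1,\epsilon},m_{2,\epsilon})$, which holds precisely because $\epsilon<\epsilon_c$.
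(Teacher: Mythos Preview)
Your argument is correct and follows essentially the same route as the paper's proof: the upper tail via $c_\epsilon(\sigma)\le\tau_{\lambda_\epsilon,\epsilon}(\sigma)$ for a minimizer $\lambda_\epsilon$, an a priori confinement of the flows to a large ball by Freidlin--Wentzell (uniform convexity making the exit-cost arbitrarily large), and then a finite covering that reduces everything to finitely many applications of Lemma~\ref{dale-bis} at radius $\epsilon+\eta$. The paper organizes things slightly differently---it proves a single ``barycenter'' statement $\PP\{z^\sigma_{c_\epsilon(\sigma)}\in\overline{\mathcal M_{\epsilon,\delta}}\}\to1$ first and then reads off both the lower tail and the persistence from it, and its confinement step tracks only $x^\sigma$ via a level set of $\Psi_1$---but the ingredients and the logic are the same as yours.
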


\begin{proof}
Since $c_\epsilon(\sigma)$ is the minimum of the possible stopping-times~$\tau_{\lambda,\epsilon}(\sigma)$ over all $\lambda$, the upper-tail estimate in \eqref{tdog} follows immediately from the upper-tail~\eqref{shane} in Lemma~\ref{dale-bis} applied to $\tau_{\lambda_\epsilon,\epsilon}(\sigma)$ for $\lambda_\epsilon$ a minimizer of the function~$\h_{\epsilon}$.

To obtain the lower-tail estimate and the estimate  \eqref{tdog2}, it is sufficient to show that the barycenter $z^\sigma$ of $x^\sigma$ and $y^\sigma$, defined by $z^\sigma_t:=(x^\sigma_t+y^\sigma_t)/2$, satisfies the exit-property:
{\color{black}
\begin{equation}\label{ben}
\lim_{\sigma\rightarrow 0}\mathbb P\left\{z^\sigma_{c_\epsilon(\sigma)}\in \overline{\mathcal{M}_{\epsilon,\delta}}\right\}= 1,
\end{equation}
where $\mathcal{M}_{\epsilon,\delta}:=\{z\in\mathbb R^d\,:\,\text{dist}(z,\mathcal{M}_{\epsilon})<\delta\}$ for $\delta$ arbitrary positive. Indeed, since $||x^\sigma_{c_\epsilon(\sigma)}-y^\sigma_{c_\epsilon(\sigma)}||= 2\epsilon$, \eqref{tdog2} and \eqref{ben}} are equivalent. Additionally, since
\begin{align*}
&\mathbb P\left\{c_\epsilon(\sigma)
\le \exp\left[\frac{2}{\sigma^2}(\underline \h_\epsilon-\delta)\right]\right\}\\
&\le \mathbb P\left\{z^\sigma_{c_\epsilon(\sigma)}\notin \overline{\mathcal{M}_{\epsilon,\delta}}\right\}
+\mathbb P\left\{c_\epsilon(\sigma)\le \exp\left[\frac{2}{\sigma^2}(\underline \h_\epsilon-\delta)\right],\,\,z^\sigma_{c_\epsilon(\sigma)}\in \overline{\mathcal{M}_{\epsilon,\delta}}\right\}\,,
\end{align*}
the limit \eqref{ben} reduces the proof of the lower tail in \eqref{tdog} to establish that
\begin{align*}
\lim_{\sigma\rightarrow 0}\mathbb P\left\{c_\epsilon(\sigma)\le \exp\left[\frac{2}{\sigma^2}(\underline \h_\epsilon-\delta)\right],\,\,z^\sigma_{c_\epsilon(\sigma)}\in \overline{\mathcal{M}_{\epsilon,\delta}}\right\}=0\,.
\end{align*}
For $\delta>0$, since $\overline{\mathcal{M}_{\epsilon,\delta}}$ is compact (this property being a consequence of the compactness of $\mathcal M_\epsilon$  which follows from the convexity of $\Psi_1$ and of $\Psi_2$), one can construct a finite covering~$\cup_{l=1}^L\mathbb B(\lambda^l;r)\supset \overline{\mathcal{M}_{\epsilon,\delta}}$ - where $r$, aimed to be small, will be chosen later on - and for which the event
\[
\left\{c_\epsilon(\sigma)\le \exp\left[\frac{2}{\sigma^2}(\underline \h_\epsilon-\delta)\right],\,z^\sigma_{c_\epsilon(\sigma)}\in \overline{\mathcal{M}_{\epsilon,\delta}}\right\}
\]
is embedded in the union
\[
\bigcup_{l=1}^L\left\{c_\epsilon(\sigma)\le \exp\left[\frac{2}{\sigma^2}(\underline \h_\epsilon-\delta)\right],\,z^\sigma_{c_\epsilon(\sigma)}\in \overline{\mathbb B(\lambda^l;r)}\right\}.
\]
For any $l$,  the event $\{z^\sigma_{c_\epsilon(\sigma)}\in\overline{\mathbb B(\lambda^l;r)}\}$ implies that the events $\left\{|| x^\sigma_{c_\epsilon(\sigma)}-\lambda^l||\le \epsilon+r\right\}$ and~$\left\{|| y^\sigma_{c_\epsilon(\sigma)}-\lambda^l ||\le \epsilon+r\right\}$ occur simultaneously, and so does $\{c_\epsilon(\sigma)\ge \tau_{\lambda^l,\epsilon+r}(\sigma)\}$. Choosing~$r$ small enough so that $\h_{\epsilon+r}(\lambda^l)-\delta'\ge \underline \h_\epsilon-\delta$ for $\delta'>0$, Lemma~\ref{dale-bis} yields, for any $l$,
\begin{align*}
&\lim_{\sigma\rightarrow 0}\mathbb P\left\{\tau_{\lambda^l,\epsilon+r}(\sigma)\le \exp\left[\frac{2}{\sigma^2}\bigg(\underline \h_\epsilon-\delta\bigg)\right]\right\}\\
&\le \lim_{\sigma\rightarrow 0}\mathbb P\left\{\tau_{\lambda^l,\epsilon+r}(\sigma)\le \exp\left[\frac{2}{\sigma^2}\bigg(\h_{\epsilon+r}(\lambda^l)-\delta'\bigg)\right]\right\}=0\,.
\end{align*}
Therefore 
\begin{align*}
\sum_{l=1}^L\mathbb P\left\{c_\epsilon(\sigma)\le \exp\left[\frac{2}{\sigma^2}(\underline \h_\epsilon-\delta)\right],\,z^\sigma_{c_\epsilon(\sigma)}\in \overline{\mathbb B(\underline \lambda^l;r)}\right\}
\end{align*}
vanishes as $\sigma$ tends to $0$, yielding
\[
\lim_{\sigma\rightarrow 0}\mathbb P\left\{c_\epsilon(\sigma)\le \exp\left[\frac{2}{\sigma^2}(\underline \h_\epsilon-\delta)\right]\right\}=0\,.
\]

Let us now establish \eqref{ben}. To this aim, for $\xi>0$, define the level set
\begin{equation*}
S_{\epsilon,\xi}=\left\{\lambda\in\mathbb R^d\,:\,\Psi_1(\lambda)-\Psi_1(\lambda_1)\ge \underline\h_\epsilon+3\xi\right\}\,.
\end{equation*}
Since $\epsilon<\epsilon_c$ ensures that $\lambda_1\notin \mathcal M_\epsilon$, $S_{\epsilon,\xi}$ is necessarily non-empty. Define next, $M_{\epsilon,\xi}$ a minimal radius, strictly larger than $\epsilon$ and such that $\mathbb R^d\setminus\mathbb B(\lambda_1;M_{\epsilon,\xi}-\epsilon)$ lies in $S_{\epsilon,\xi}$. Equivalently, for any $\lambda\in\mathbb R^d$ such that $||\lambda-\lambda_1||>M_{\epsilon,\xi}-\epsilon$, the difference $\Psi_1(\lambda)-\Psi_1(\lambda_1)$ is larger than~$\underline\h_\epsilon+3\xi$. From this, one can derive the bound:
\begin{align*}
&\mathbb P\left\{z^\sigma_{c_\epsilon(\sigma)}\notin \overline{\mathcal{M}_{\epsilon,\delta}}\right\}\\
&\le \mathbb P\left\{z^\sigma_{c_\epsilon(\sigma)}\notin \overline{\mathbb B(\lambda_1;M_{\epsilon,\xi})}\right\}
+\mathbb P\left\{z^\sigma_{c_\epsilon(\sigma)}\in \overline{\mathbb B(\lambda_1;M_{\epsilon,\xi})}\setminus \overline{\mathcal{M}_{\epsilon,\delta}}\right\}=:I_1(\sigma)+I_2(\sigma)\,,
\end{align*}
and check that $\lim_{\sigma\rightarrow 0}I_i(\sigma)=0$ for $i=1,2$.

For the limit of $I_1(\sigma)$: using again the fact that $||x^\sigma_{c_\epsilon(\sigma)}-y^\sigma_{c_\epsilon(\sigma)}||=2\epsilon$, the distance between~$x^\sigma_{c_\epsilon(\sigma)}$ and $\lambda_1$ is larger than $|| z^\sigma_{c_\epsilon(\sigma)}-\lambda_1||-\epsilon$. Introducing  
the first time $x^\sigma$ exits the ball~$\mathbb B(\lambda_1;M_{\epsilon,\xi}-\epsilon)$:
\[
\tilde \tau_{M_{\epsilon,\xi}-\epsilon}(\sigma):=\inf\left\{t\ge 0\,:\,\gaga x^\sigma_t-\lambda_1\drdr \ge M_{\epsilon,\xi}-\epsilon\right\}\,,
\]
we have 
\begin{align*}
\mathbb P\left\{z^{\sigma}_{c_\epsilon(\sigma)}\notin \overline{\mathbb{B}(\lambda_1;M_{\epsilon,\xi})}\right\}\le \mathbb P\left\{x^{\sigma}_{c_\epsilon(\sigma)}\notin \overline{\mathbb{B}(\lambda_1;M_{\epsilon,\xi}-\epsilon)}\right\}\le \mathbb P\left\{\tilde \tau_{M_{\epsilon,\xi}-\epsilon}\le c_\epsilon(\sigma)\right\}\,.
\end{align*}
Since $\mathbb B(\lambda_1;M_{\epsilon,\xi}-\epsilon)$ is stable by $-\nabla \Psi_1$ - (again) by the convexity of $\Psi_1$ -,
Theorem \ref{thm:KramersDZ} applies for $\tilde{\tau}_{M_{\epsilon,\xi}-\epsilon}$ with the exit-cost
\begin{equation*}
\tilde h_\epsilon:=\inf_{x\in \partial \mathbb{B}(\lambda_1;M_{\epsilon,\xi}-\epsilon)}\{\Psi_1(x)-\Psi_1(\lambda_1)\}\geq\underline\h_\epsilon+3\xi\,.
\end{equation*}
In addition, since, for any $\delta'>0$,
\begin{align*}
\mathbb P\left\{\tilde \tau_{M_{\epsilon,\xi}-\epsilon}(\sigma)\le c_\epsilon(\sigma)\right\}
&\le \mathbb P\left\{\tilde{\tau}_{M_{\epsilon,\xi}-\epsilon}(\sigma)\le \exp\left[\frac{2}{\sigma^2}(\underline \h_\epsilon+\delta')\right]\right\}\\
&+\mathbb P\left\{c_\epsilon(\sigma)> \exp\left[\frac{2}{\sigma^2}(\underline \h_\epsilon+\delta')\right]\right\},
\end{align*}
choosing $\delta'<3\xi$ so that $\underline \h_\epsilon+\delta'\le \tilde \h_\epsilon-\delta''$ for $\delta''>0$ yields
\begin{align*}
\lim_{\sigma\rightarrow 0}\mathbb P\left\{\tilde \tau_{M_{\epsilon,\xi}-\epsilon}\le \exp\left[\frac{2}{\sigma^2}(\underline \h_\epsilon+\delta')\right]\right\}\le \lim_{\sigma\rightarrow 0}\mathbb P\left\{\tilde \tau_{M_{\epsilon,\xi}-\epsilon}\le \exp\left[\frac{2}{\sigma^2}(\tilde \h_\epsilon-\delta'')\right]\right\}=0\,.
\end{align*}
Owing to the upper-tail of $c_\epsilon(\sigma)$ established at the beginning of the proof, we deduce that
\begin{align*}
&\lim_{\sigma\rightarrow 0}\mathbb P\left\{z^{\sigma}_{c_\epsilon(\sigma)}\notin \overline{\mathbb{B}(\lambda_1;M_{\epsilon,\xi})}\right\}\\
&\le \lim_{\sigma\rightarrow 0}\mathbb P\left\{\tilde \tau_{M_{\epsilon,\xi}-\epsilon}\le \exp\left[\frac{2}{\sigma^2}(\underline \h_\epsilon+\delta')\right]\right\}+\lim_{\sigma\rightarrow 0}\mathbb P\left\{c_\epsilon(\sigma)> \exp\left[\frac{2}{\sigma^2}(\underline \h_\epsilon+\delta')\right]\right\}=0\,.
\end{align*} 

For the limit of $I_2(\sigma)$: introducing a new covering $\cup_{l=1}^{\hat L} \mathbb B(\hat \lambda^l;\hat r)\supset \overline{\mathbb B(\lambda_1;M_{\epsilon,\xi})}\setminus \overline{\mathcal{M}_{\epsilon,\delta}}$~-~where $\hat r$ will be again chosen later on - we derive the upper-bound
\begin{align*}
\mathbb P\bigg\{z^\sigma_{c_\epsilon(\sigma)}\in \overline{\mathbb B(\lambda_1;M_{\epsilon,\xi})}\setminus \overline{\mathcal{M}_{\epsilon,\delta}}\bigg\}\le \sum_{l=1}^{\hat L}\mathbb P\bigg\{z^\sigma_{c_\epsilon(\sigma)}\in \mathbb B(\hat \lambda^l;\hat r)\bigg\}\,.
\end{align*}
For any $l$, we also have
\begin{align*}
\mathbb P\bigg\{z^\sigma_{c_\epsilon(\sigma)}\in \mathbb B(\hat \lambda^l;\hat r)\bigg\}
&\le \mathbb P\bigg\{x^\sigma_{c_\epsilon(\sigma)}\in \mathbb B(\hat \lambda^l;\hat r+\epsilon),y^\sigma_{c_\epsilon(\sigma)}\in \mathbb B(\hat \lambda^l;\hat r+\epsilon)\bigg\}\\
&\le \mathbb P\bigg\{\tau_{\hat \lambda^l,\epsilon+\hat r}(\sigma)\le c_\epsilon(\sigma)\bigg\}\,.
\end{align*}
According to Lemma \ref{dale-bis}, for any $\delta>0$,
\begin{align*}
\lim_{\sigma\rightarrow 0}\mathbb P\bigg\{\tau_{\hat \lambda^l,\epsilon+\hat r}(\sigma)\le \exp\left[\frac{2}{\sigma^2}(\h_{\epsilon+\hat r}(\hat \lambda^l)-\delta)\right]\bigg\}=0\,.
\end{align*}
Choosing $\hat r$ and $\delta$ so that, for some $\hat\delta>0$, $\h_{\epsilon+\hat r}(\hat \lambda^l)-\hat \delta\ge  \underline \h_\epsilon+\delta$ then gives
\begin{align*}
&\lim_{\sigma\rightarrow 0}\mathbb P\bigg\{\tau_{\hat \lambda^l,\epsilon+\hat r}(\sigma)\le \exp\left[\frac{2}{\sigma^2}(\underline \h_\epsilon+\delta)\right]\bigg\}\\
&\le
\lim_{\sigma\rightarrow 0} \mathbb P\bigg\{\tau_{\hat \lambda^l,\epsilon+\hat r}(\sigma)\le \exp\left[\frac{2}{\sigma^2}(\h_{\epsilon+\hat r}(\hat \lambda^l)-\hat \delta)
\right]\bigg\}=0.
\end{align*}
Therefore, using the inequality
\begin{align*}
\mathbb P\left\{\tau_{\hat \lambda^l,\epsilon+\hat r}(\sigma)\le c_\epsilon(\sigma)\right\}&\le \mathbb P\left\{\tau_{\hat \lambda^l,\epsilon+\hat r}(\sigma)\le \exp\left[\frac{2}{\sigma^2}(\underline \h_\epsilon+\delta)\right]\right\}\\
&+\mathbb P\left\{c_\epsilon(\sigma)\ge \exp\left[\frac{2}{\sigma^2}(\underline \h_\epsilon+\delta)\right]\right\}\,,
\end{align*}
and again Lemma \ref{dale-bis} and the upper-tail of $c_\epsilon(\sigma)$ yields to
\[
\lim_{\sigma\rightarrow 0}\sum_{l=1}^{\hat{L}}\mathbb P\bigg\{z^\sigma_{c_\epsilon(\sigma)}\in \mathbb B(\hat \lambda^l;\hat r)\bigg\}=0\,.
\] 
This immediately implies that $\lim_{\sigma\rightarrow 0}I_2(\sigma)=0$ and ends the proof of \eqref{ben}.  
\end{proof}
 
\begin{thm}
\label{lacollisionthm} For $\lambda_0$ the minimizer of $\h_0$ and $\underline{\h}_0:=\h_0(\lambda_0)$, for any $\delta>0$, we have:

\begin{equation*}
\lim_{\epsilon\to0}
\lim_{\sigma\to0}\PP\left\{\exp\left[\frac{2}{\sigma^2}\left(\underline{\h}_0
-\delta\right)\right]<c_\epsilon(\sigma)<
\exp\left[\frac{2}{\sigma^2}\left(\underline{\h}_0+\delta\right)\right]\right\}=1\,
\end{equation*}
and
\begin{equation*}
\lim_{\epsilon\to0}\lim_{\sigma\to0}\PP\left\{\max\bigg(\gaga x^\sigma_{c_\epsilon(\sigma)}-\lambda_0\drdr,\gaga y^\sigma_{c_\epsilon(\sigma)}-\lambda_0\drdr\bigg)\le \delta\right\}=1\,.
\end{equation*}

\end{thm}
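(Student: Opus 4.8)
The plan is to deduce the statement from Proposition~\ref{lacollision} by letting $\epsilon\downarrow0$, the whole argument resting on two elementary facts about the family $(\h_\epsilon)_{0<\epsilon<\epsilon_0}$: its uniform convergence to $\h_0$ on compact sets, and a coercivity lower bound uniform in $\epsilon$. First I would fix $\delta>0$. Since $\epsilon_c>0$, Proposition~\ref{lacollision} applies to every $0<\epsilon<\epsilon_c$ and gives, for any $\delta'>0$,
\[
\lim_{\sigma\to0}\PP\left\{\exp\!\left[\tfrac{2}{\sigma^2}(\underline\h_\epsilon-\delta')\right]<c_\epsilon(\sigma)<\exp\!\left[\tfrac{2}{\sigma^2}(\underline\h_\epsilon+\delta')\right]\right\}=1 .
\]
Hence it suffices to prove $\underline\h_\epsilon\to\underline\h_0$ as $\epsilon\to0$: once this is known, choosing $\epsilon<\epsilon_c$ small enough that $|\underline\h_\epsilon-\underline\h_0|<\delta/2$ and applying the above display with $\delta'=\delta/2$ forces the inner $\sigma$-limit of $\PP\{\exp[\tfrac{2}{\sigma^2}(\underline\h_0-\delta)]<c_\epsilon(\sigma)<\exp[\tfrac{2}{\sigma^2}(\underline\h_0+\delta)]\}$ to be $1$ for that $\epsilon$ (the exponent window $\underline\h_0\pm\delta$ strictly contains $\underline\h_\epsilon\pm\delta/2$), and letting $\epsilon\downarrow0$ then concludes the time part.

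To get $\underline\h_\epsilon\to\underline\h_0$, I would argue as follows. By uniform continuity of $\Psi_1,\Psi_2$ on any compact, $\inf_{x\in\partial\mathbb B(\lambda;\epsilon)}\Psi_i(x)\to\Psi_i(\lambda)$ uniformly for $\lambda$ in a given compact, so $\h_\epsilon\to\h_0$ uniformly on compacts. Uniform convexity of $\Psi_i$ yields, for some $c>0$, $\Psi_i(x)-\Psi_i(\lambda_i)\ge\tfrac{c}{2}\gaga x-\lambda_i\drdr^2$, whence
\[
\h_\epsilon(\lambda)\ \ge\ \tfrac{c}{2}\Big[\big(\gaga\lambda-\lambda_1\drdr-\epsilon\big)^2+\big(\gaga\lambda-\lambda_2\drdr-\epsilon\big)^2\Big]\quad\text{whenever }\min_i\gaga\lambda-\lambda_i\drdr\ge\epsilon_0,
\]
a bound that tends to $+\infty$ with $\gaga\lambda\drdr$ uniformly over $\epsilon<\epsilon_0$. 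Since $\underline\h_\epsilon\le\h_\epsilon(\lambda_0)$ stays bounded, there is a fixed compact $K$ containing every minimizer set $\mathcal M_\epsilon$, $\epsilon<\epsilon_0$. Combining the coercive lower bound, uniform convergence on $K$, and $\inf_\lambda\h_0=\underline\h_0$ gives $\underline\h_\epsilon\to\underline\h_0$; moreover, using that $\h_0$ is uniformly convex with unique minimizer $\lambda_0$, any $\lambda_\epsilon\in\mathcal M_\epsilon$ satisfies $\h_0(\lambda_\epsilon)-\underline\h_0\le\sup_K|\h_0-\h_\epsilon|+|\underline\h_\epsilon-\underline\h_0|\to0$, so that $\gaga\lambda_\epsilon-\lambda_0\drdr^2\le\tfrac{2}{c}(\h_0(\lambda_\epsilon)-\underline\h_0)\to0$, i.e. $\sup_{\lambda_\epsilon\in\mathcal M_\epsilon}\gaga\lambda_\epsilon-\lambda_0\drdr\to0$.

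For the collision-location, I would start from \eqref{tdog2}: for fixed $\epsilon<\epsilon_c$, with $\PP$-probability tending to $1$ as $\sigma\to0$ there is $\lambda_\epsilon\in\mathcal M_\epsilon$ with $\gaga x^\sigma_{c_\epsilon(\sigma)}-\lambda_\epsilon\drdr\le\epsilon+\delta$ and $\gaga y^\sigma_{c_\epsilon(\sigma)}-\lambda_\epsilon\drdr\le\epsilon+\delta$ (rewriting the distance to $\mathbb B(\lambda_\epsilon;\epsilon)$). Then $\gaga x^\sigma_{c_\epsilon(\sigma)}-\lambda_0\drdr\le\epsilon+\delta+\sup_{\lambda'\in\mathcal M_\epsilon}\gaga\lambda'-\lambda_0\drdr$, and likewise for $y^\sigma$; choosing $\epsilon$ small enough that $\epsilon+\sup_{\lambda'\in\mathcal M_\epsilon}\gaga\lambda'-\lambda_0\drdr<\delta$ and relabelling $2\delta$ as $\delta$ yields $\lim_{\sigma\to0}\PP\{\max(\gaga x^\sigma_{c_\epsilon(\sigma)}-\lambda_0\drdr,\gaga y^\sigma_{c_\epsilon(\sigma)}-\lambda_0\drdr)\le\delta\}=1$ for all sufficiently small $\epsilon$, hence the claimed iterated limit. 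The only genuinely delicate point is the uniform-in-$\epsilon$ coercivity used to confine $\mathcal M_\epsilon$ to a fixed compact and thereby upgrade pointwise to uniform control; the rest is bookkeeping with $\delta$'s on top of Proposition~\ref{lacollision}.
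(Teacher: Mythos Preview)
Your proposal is correct and follows essentially the same route as the paper: the paper states that Theorem~\ref{lacollisionthm} is ``a straightforward consequence of Proposition~\ref{lacollision} and of the limit $\lim_{\epsilon\to0}\underline h_\epsilon=\underline h_0$,'' and the convergence of $\mathcal M_\epsilon$ to $\{\lambda_0\}$ is handled in the preliminary discussion preceding Proposition~\ref{lacollision} and in the remark following Theorem~\ref{lacollisionthm}. Your argument simply makes these steps explicit, supplying the uniform-in-$\epsilon$ coercivity bound and the uniform convergence on compacts that justify the limit of the infima and the shrinking of the minimizer sets.
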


Theorem \ref{lacollisionthm} provides an analog of Theorem \ref{thm:main1} and is a straightforward consequence of Proposition  \ref{lacollision} and of the limit $\lim_{\epsilon\rightarrow 0}\underline h_\epsilon=\underline h_0$.  
 \begin{rem} Let us briefly comment on the set of minimizers $\mathcal M_\epsilon$ and highlight the possible collision-locations $\lambda_\epsilon\in\mathcal M_\epsilon$ in Proposition \ref{lacollision}. To this aim, observe 
\begin{align*}
\inf_\lambda h_\epsilon(\lambda)&= \inf_{\lambda}\inf_{(x,y)\in\partial \mathbb{B}(0;\epsilon)\times \partial \mathbb{B}(0;\epsilon)} \Big(\Psi_1(\lambda+x)-\Psi_1(\lambda_1)+\Psi_2(\lambda+y)-\Psi_2(\lambda_2)\Big)\\
&=\inf_{(x,y)\in\partial \mathbb{B}(0;\epsilon)\times \partial \mathbb{B}(0;\epsilon)}\inf_{\lambda} \Big(\Psi_1(\lambda+x)-\Psi_1(\lambda_1)+\Psi_2(\lambda+y)-\Psi_2(\lambda_2)\Big)\,,
\end{align*}
the first equality following from shifting the minimization over $(x,y)\in\partial \mathbb{B}(\lambda;\epsilon)\times \partial \mathbb{B}(\lambda;\epsilon)$ to the set of points $(x+\lambda,y+\lambda)$ for $(x,y)\in\partial \mathbb{B}(0;\epsilon)\times \partial \mathbb{B}(0;\varepsilon)$ and the second equality by a simple \emph{min-min} principle. 
The potentials $\Psi_1$ and $\Psi_2$ being uniformly convex, the minimizers of $\lambda\mapsto\Psi_1(\lambda+x)-\Psi_1(\lambda_1)+\Psi_2(\lambda+y)-\Psi_2(\lambda_2)$ are explicitly given by 
\[\lambda(x,y)=\Big(\nabla \Psi_1(\cdot+x)+\nabla \Psi_2(\cdot+y)\Big)^{-1}(0)\,,
\]
independently of $(x,y)$. As such, $\inf_\lambda H_\varepsilon(\lambda)$ rewrites as 
\[
\inf_{(x,y)\in\partial \mathbb{B}(0;\epsilon)\times \partial \mathbb{B}(0;\epsilon)}\Bigg(\Psi_1(\lambda(x,y)+x)-\Psi_1(\lambda_1)+\Psi_2(\lambda(x,y)+y)-\Psi_2(\lambda_2)
\Bigg)\,,
\]
and, subsequently, any $\lambda_\epsilon$ of $\mathcal M_{\epsilon}$ corresponds to a point $\lambda_\varepsilon(x^*_\epsilon,y^*_\epsilon)$ where $(x^*_\epsilon,y^*_\epsilon)$ achieves the above minimum. Compared to the limit collision-location, $\lambda_0=(\nabla \Psi_1+\nabla \Psi_2)^{-1}(0)$, the minimizers $\lambda_\varepsilon$ are so perturbations of $\lambda_0$ in a direction of magnitude $\varepsilon$. As $\varepsilon$ decreases to~$0$, the regularity of $\Psi_1$ and $\Psi_2$ guarantees that $\mathcal M_\varepsilon$  concentrates on the single point~$\{\lambda_0\}$ and $\inf_\lambda h_\varepsilon$ converges naturally to $\inf_\lambda h_0$ respectively as $\epsilon\downarrow 0$. Illustratively, consider the case where $\Psi_1$ and $\Psi_2$ are quadratic potentials of the form $\Psi_i(z)=\gamma_i||z-\lambda_i||^2/2$ for~$\gamma_i>0$. The first collision-location $\lambda_0$ is then explicitly given by $\frac{\gamma_1\lambda_1+\gamma_2\lambda_2}{\gamma_1+\gamma_2}$ and the exit-cost of the first collision by $\underline{\h}_0=\frac{\gamma_1\gamma_2}{2(\gamma_1+\gamma_2)}||\lambda_2-\lambda_1||^2$. Meanwhile, for any $(x,y)$, $\lambda(x,y)$ is given by~$\frac{\gamma_1(\lambda_1-x)+\gamma_2(\lambda_2-y)}{\gamma_1+\gamma_2}$. 
\end{rem}

\section{On the first collision of two self-stabilizing processes}\label{sec:SelfStabilizingCase}

Throughout this section, we consider the pair of self-stabilizing processes \eqref{MV1} and \eqref{MV2}, which under $(\mathbf{A})$ formulates as 
\begin{subequations}
\label{SSD}
\begin{align}
X_t&=x_1+\sigma B_t-\int_0^t\nabla V(X_s)ds-\alpha \int_0^t\left(X_s-\mathbb E[X_s]\right)ds,\,t\ge 0,\label{SSD1}\\
Y_t&=x_2+\sigma \widetilde{B}_t-\int_0^t\nabla V(Y_s)ds-\alpha\int_0^t\left(Y_s-\mathbb E[Y_s]\right)ds,\,t\ge 0,\label{SSD2}
\end{align}
\end{subequations}
and establish Theorem \ref{thm:main1}. A first step towards this aim consists in obtaining a coupling estimate between \eqref{SSD1} and \eqref{SSD2}, and the ``linearized'' flows:
\begin{subequations}
\begin{equation}
\label{x}
x_{T,t}^\sigma=X_{T}+\sigma\big(B_t-B_{T}\big)-\int_{T}^t\nabla V(x_{T,s}^\sigma)ds-\alpha\int_{T}^t(x_{T,s}^\sigma-\lambda_1)ds\,,t\ge T,
\end{equation}
and
\begin{equation}
\label{y}
y_{T,t}^\sigma=Y_{T}+\sigma\big(\widetilde{B}_t-\widetilde{B}_{T}\big)-\int_{T}^t\nabla V(y_{T,s}^\sigma)ds-\alpha\int_{T}^t\Big(y_{T,s}^\sigma-\lambda_2\Big)ds\,,t\ge T.
\end{equation}
\end{subequations}
The starting time $T$ from which the coupling is constructed will be specified in a few lines. 

Following \cite[Theorem 3.4]{Kinetic}, the condition of synchronization $(\mathbf{A})-(iii)$ ensures that the mean vector~$(\mathbb E[X_t],\mathbb E[Y_t])$ and $(\lambda_1,\lambda_2)$ can be found arbitrarily close at large time $t$. Specifically: for any $\kappa>0$, there exist $T_\kappa$, finite and independent of $\sigma$, and $0<\sigma_\kappa<\infty$ such that:
\begin{equation}
\label{carl_bis}
\sup_{\sigma<\sigma_\kappa}\sup_{t\geq T_\kappa}||\mathbb E[X_t]-\lambda_1||+\sup_{\sigma<\sigma_\kappa}\sup_{t\geq T_\kappa}||\mathbb E[Y_t]-\lambda_2||\leq \kappa\,.
\end{equation}
This successively leads to the following coupling lemma:
\begin{lem}
\label{lucille} For any $\xi>0$, there exists $T_\xi\in(0,\infty)$, depending only on $\alpha,\theta$ and $\xi$ such that
\begin{equation*}
\PP\left\{\sup_{t\geq T_\xi}||X_t-x_{T_\xi,t}^\sigma||\geq\xi\right\}=0=
\PP\left\{\sup_{t\geq T_\xi}||Y_t-y_{T_\xi,t}^\sigma||\geq\xi\right\}\,.
\end{equation*}
\end{lem}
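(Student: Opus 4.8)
\emph{Proof strategy.} The plan is to control the pathwise difference $\Delta^X_t := X_t - x^\sigma_{T,t}$ (and symmetrically $\Delta^Y_t := Y_t - y^\sigma_{T,t}$) by a Gronwall-type differential inequality, exploiting the uniform convexity of the effective potential. Subtracting \eqref{x} from \eqref{SSD1}, the Brownian increments cancel (both equations are driven by the same $B$) and $\Delta^X_T = 0$; hence $t\mapsto \Delta^X_t$ is $\mathcal C^1$ on $[T,\infty)$ and solves the random ODE
\begin{equation*}
\frac{d}{dt}\Delta^X_t = -\big(\nabla V(X_t) - \nabla V(x^\sigma_{T,t})\big) - \alpha\,\Delta^X_t + \alpha\big(\mathbb E[X_t] - \lambda_1\big)\,,\qquad t\ge T\,.
\end{equation*}

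Next I would run the energy estimate on $\tfrac12\|\Delta^X_t\|^2$. Since $V$ is $\mathcal C^2$ with $\nabla^2 V$ bounded below on the compact ball $\{\|x\|\le R'\}$ and uniformly positive definite outside it by $(\mathbf{A})-(i)$, the constant $\theta$ in $(\mathbf{A})-(iii)$ is finite and $\nabla^2 V(x) - \theta\,{\rm Id}$ is positive semidefinite for every $x$, so $\langle \nabla V(a) - \nabla V(b),\,a-b\rangle \ge \theta\|a-b\|^2$. Together with the synchronization condition $\alpha > -\theta$, setting $\gamma := \theta + \alpha > 0$ we obtain
\begin{equation*}
\frac{d}{dt}\Big(\tfrac12\|\Delta^X_t\|^2\Big) \;\le\; -\gamma\,\|\Delta^X_t\|^2 + |\alpha|\,\|\Delta^X_t\|\,\big\|\mathbb E[X_t] - \lambda_1\big\|\,,\qquad t\ge T\,.
\end{equation*}
A standard comparison argument (regularizing $\|\Delta^X_t\|$ near its zeros, then applying Gronwall's lemma) then gives, for all $t\ge T$,
\begin{equation*}
\|\Delta^X_t\| \;\le\; |\alpha|\int_T^t e^{-\gamma(t-s)}\,\big\|\mathbb E[X_s]-\lambda_1\big\|\,ds\,.
\end{equation*}

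It remains to feed in the a priori estimate \eqref{carl_bis}, quoted from \cite[Theorem 3.4]{Kinetic}: for any $\kappa>0$ there is a finite time $T_\kappa$, independent of $\sigma$ for $\sigma<\sigma_\kappa$, beyond which $\|\mathbb E[X_t]-\lambda_1\|\le\kappa$. Taking $T=T_\kappa$ in the coupling, the previous bound yields $\sup_{t\ge T_\kappa}\|\Delta^X_t\| \le |\alpha|\kappa/\gamma$ almost surely. Given $\xi>0$, it then suffices to choose $\kappa$ small enough that $|\alpha|\kappa/\gamma < \xi$ — a choice depending only on $\alpha,\theta,\xi$ — and to set $T_\xi := T_\kappa$; this gives $\sup_{t\ge T_\xi}\|X_t - x^\sigma_{T_\xi,t}\| < \xi$ almost surely, hence $\PP\{\sup_{t\ge T_\xi}\|X_t - x^\sigma_{T_\xi,t}\|\ge\xi\}=0$. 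The argument for $Y$ and $y^\sigma_{T_\xi,\cdot}$ is identical, replacing \eqref{SSD1}, \eqref{x}, $\lambda_1$ by \eqref{SSD2}, \eqref{y}, $\lambda_2$. (If $\alpha=0$ the two processes coincide and the claim is trivial.)

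The only genuine content beyond this bookkeeping is the input \eqref{carl_bis}: the real difficulty — showing that $(\mathbb E[X_t],\mathbb E[Y_t])$ approaches $(\lambda_1,\lambda_2)$ uniformly for small $\sigma$ and large $t$ — is absorbed into that cited statement. A minor subtlety to keep in mind is that the energy inequality above is a pathwise (not an It\^o) computation; this is legitimate precisely because the martingale parts of $X$ and $x^\sigma_{T,\cdot}$ cancel in the difference, leaving an absolutely continuous process, and similarly for $Y$.
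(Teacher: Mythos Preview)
Your proof is correct and follows essentially the same route as the paper: subtract the two equations so the Brownian increments cancel, derive a differential inequality for $\|\Delta^X_t\|^2$ using the $(\alpha+\theta)$-convexity of $x\mapsto V(x)+\tfrac{\alpha}{2}\|x\|^2$, feed in the a~priori bound \eqref{carl_bis} on $\|\mathbb E[X_t]-\lambda_1\|$, and choose $\kappa$ small relative to $\xi$. The only cosmetic difference is in closing the inequality: the paper invokes a comparison lemma (Lemma~\ref{randal}, from \cite{BRTV}) to read off $\|\Delta^X_t\|\le \alpha\kappa/(\alpha+\theta)$ directly, whereas you integrate via Gronwall to obtain the equivalent bound $\|\Delta^X_t\|\le |\alpha|\kappa/\gamma$; both yield the same threshold and the same $T_\xi$.
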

\begin{rem} This statement slightly extends the coupling estimate previously obtained in \cite[Lemma 4.6]{JOTP}. There, the coupling between $X$ and $x^\sigma$ was established on an interval of the form $[T_\kappa,T^s_\kappa(\sigma)]$ where $T^s_\kappa(\sigma)$ is the first time such that $\mathbb P\{\tau_D\le T^s_\kappa(\sigma)\}=\kappa$ for~$D$ a stable set by $x\mapsto -\nabla V(x)-\alpha(x-\lambda_1)$ containing $\lambda_1$ in its interior. Here, $\tau_D$ does correspond to the first exit-time of the sole diffusion $X$ from $D$. The absence of a limiting upper-time horizon  in Lemma \ref{lucille} is justified by additionally following the arguments of~\cite[Lemma~4.1]{Kinetic}.  
\end{rem}
  As a preliminary step for the proof of Lemma \ref{lucille}, we should recall the following key result: 
\begin{lem}[\cite{BRTV}, Lemma 3.7]
\label{randal}
Let $f:\mathbb R^+\rightarrow \mathbb R$ be a continuous and differentiable function. Assuming that there exists $l>0$ such that $\{t>0 : f(t)> l\}\subset \{t>0 : f'(t)< 0\}$, then $f(t)\leq \max(f(0),l)$ for all $t$.
\end{lem}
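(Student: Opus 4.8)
The plan is to argue by contradiction, combining the intermediate value theorem with the mean value theorem. Set $M:=\max(f(0),l)$, so that $M\ge l$ and $f(0)\le M$; the goal is to show $f(t)\le M$ for every $t\ge 0$. Suppose, for contradiction, that there is some $t^\star$ with $f(t^\star)>M$; since $f(0)\le M$ we must have $t^\star>0$.

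The next step is to isolate the last instant before $t^\star$ at which $f$ remains at or below the barrier $M$. I would define
\[
t_0:=\sup\{t\in[0,t^\star]\,:\,f(t)\le M\},
\]
which is well defined because $0$ belongs to the set. Since $f$ is continuous and $f(t^\star)>M$, there is a whole neighbourhood of $t^\star$ on which $f>M$, hence $t_0<t^\star$. By maximality of the supremum, $f(t)>M$ for every $t\in(t_0,t^\star]$; letting $t\downarrow t_0$ and using continuity gives $f(t_0)\ge M$, while choosing $t_n\uparrow t_0$ with $f(t_n)\le M$ gives $f(t_0)\le M$, so $f(t_0)=M$. Now every point of $(t_0,t^\star]$ is strictly positive and satisfies $f(t)>M\ge l$, so the standing hypothesis $\{t>0:f(t)>l\}\subset\{t>0:f'(t)<0\}$ forces $f'(t)<0$ throughout $(t_0,t^\star]$.

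Finally I would apply the mean value theorem on $[t_0,t^\star]$ (valid since $f$ is continuous there and differentiable on the open interval): there is $\xi\in(t_0,t^\star)$ with
\[
f(t^\star)-f(t_0)=f'(\xi)\,(t^\star-t_0)<0,
\]
so $f(t^\star)<f(t_0)=M$, contradicting $f(t^\star)>M$. Hence no such $t^\star$ exists and $f(t)\le M=\max(f(0),l)$ for all $t\ge 0$.

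The argument is elementary; the only point demanding some care is the passage to $t_0$ and the verification that $f(t_0)=M$ (rather than $f(t_0)<M$), which is precisely where continuity — not merely differentiability — is used, together with the bookkeeping that every point of $(t_0,t^\star]$ is positive, so that the hypothesis, stated only for $t>0$, may legitimately be invoked there. The edge case $t_0=0$ needs no separate treatment: the interval $(0,t^\star]$ still lies in $\{t>0\}$ and the same mean value step applies verbatim (and in fact one still obtains $f(0)=M$ from the one-sided limit).
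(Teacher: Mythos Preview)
Your proof is correct. The paper itself does not supply a proof of this lemma; it is quoted verbatim as Lemma~3.7 of \cite{BRTV} and used as a black box, so there is no in-paper argument to compare against. Your contradiction argument via the last crossing time $t_0$ and the mean value theorem is the standard elementary route, and the care you take with the boundary value $f(t_0)=M$ and the edge case $t_0=0$ is appropriate. One cosmetic remark: the sentence ``choosing $t_n\uparrow t_0$ with $f(t_n)\le M$'' is slightly more than you need, since the set $\{t\in[0,t^\star]:f(t)\le M\}$ is closed by continuity of $f$, so its supremum $t_0$ automatically belongs to it and $f(t_0)\le M$ follows directly; but your version is also fine.
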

\begin{proof}[Proof of Lemma \ref{lucille}] We only focus the demonstration on the pair $X$ and $x^\sigma$, the coupling between $Y$ and $y^\sigma$ being handled in the exact same way. Fix $\xi>0$ and, given~$\kappa>0$~-~to be chosen later - and $\sigma>0$, let $T_\kappa$ be given by \eqref{carl_bis}. For any $t\ge T_\kappa$,  
\begin{equation*}
d||X_t-x_{T_\kappa,t}^\sigma||^2=-2\left(X_t-x_{T_\kappa,t}^\sigma\right)\bigg(\nabla W_{\mu_t}\left(X_t\right)-\nabla W_{\lambda_1}\left(x_{T_\kappa,t}^\sigma\right)\bigg)dt\,,
\end{equation*}
recalling that $\mu_t=\mathcal{L}\left(X_t\right)$ and setting $W_\mu(x):=V(x)+F\ast\mu(x)$ and $W_{\lambda_1}(x):=W_{\delta_{\lambda_1}}(x)$ where $\delta_{\lambda_1}$ is the Dirac measure in $\lambda_1$. Adding and subtracting $\nabla W_{\lambda_1}(X_t)$, the above yields
\begin{align*}
&d||X_t-x_{T_\kappa,t}^\sigma||^2\\
&=-2\left(X_t-x_{T_\kappa,t}^\sigma\right)\left(\nabla W_{\lambda_1}\left(X_t\right)-\nabla W_{\lambda_1}\left(x_{T_\kappa,t}^\sigma\right)\right)dt\\
&-2\left(X_t-x_{T_\kappa,t}^\sigma\right)\left(\nabla W_{\mu_t}\left(X_t\right)-\nabla W_{\lambda_1}\left(X_t\right)\right)dt\,,\\
&=-2\left(X_t-x_{T_\kappa,t}^\sigma\right)\left(\nabla W_{\lambda_1}\left(X_t\right)-\nabla W_{\lambda_1}\left(x_{T_\kappa,t}^\sigma\right)\right)dt-2\alpha\left(X_t-x_{T_\kappa,t}^\sigma\right)
\left(\lambda_1-\mathbb E[X_t] \right)
dt\,.
\end{align*}
As $(A)-(iii)$ implies $x\mapsto W_{\lambda_1}(x)
=V(x)+ F(x-\lambda)$ is $(\alpha+\theta)$-convex, $\zeta_t:=||X_t-x_{T_\kappa,t}^\sigma||^2$ is differentiable and satisfies:
\begin{equation*}
\frac{d}{dt}\zeta_t\leq-2(\alpha+\theta)\zeta_t+2\alpha\sqrt{\zeta_t}||\lambda_1-\mathbb E[X_t]||\,.
\end{equation*}
As \eqref{carl_bis} implies $||\lambda_1-\mathbb E[X_t]||\leq\sqrt{\EE[||\lambda_1-X_t||^2]}\leq\kappa$, it follows that
\begin{equation*}
\frac{d}{dt}\zeta_t\leq-2(\alpha+\theta)\zeta_t+2\alpha\sqrt{\kappa}\sqrt{\zeta_t}\,.
\end{equation*}
Applying Lemma~\ref{randal} and since $\zeta_{T_\kappa}=0$, it follows that $\zeta_t\le \big(\frac{\alpha}{\alpha+\theta}\kappa\big)^2$. Taking next $\kappa<\frac{\alpha+\theta}{\alpha}\xi$ gives the claim.
\end{proof}
\noindent
Since the potentials 
\begin{equation}
\label{GlobalPot}
\Psi_1(x):=V(x)+\frac{\alpha}{2}||x-\lambda_1||^2,\,\quad\Psi_2(y):=V(y)+\frac{\alpha}{2}||y-\lambda_2||^2\,,
\end{equation}
driving \eqref{x} and \eqref{y} are uniformly convex, applying Lemma \ref{dale-bis} in Section \ref{sec:LinearCase} - up to a time shift, allowed by the Markov property of $(x^\sigma,y^\sigma)$ - yields to the Kramers' type law:

\begin{equation}
\label{merlemerle}
\lim_{\sigma\to0}\PP\left\{\exp\left[\frac{2}{\sigma^2}\left(\widehat{H}_\varepsilon(\lambda)-\delta\right)\right]<\tau_{\lambda,\varepsilon}(\sigma)<\exp\left[\frac{2}{\sigma^2}\left(\widehat{H}_\varepsilon(\lambda)+\delta\right)\right]\right\}=1\,,
\end{equation}
where
$$\tau_{\lambda,\varepsilon}(\sigma):=\inf\left\{t\geq T_\xi\,\,:\,\,(x_{T_\xi,t}^\sigma,y_{T_\xi,t}^\sigma)\in\mathbb{B}\left(\lambda;\varepsilon\right)\times\mathbb{B}\left(\lambda;\varepsilon\right)\right\}\,.$$
Here, $T_\xi$ is given as in Lemma \ref{lucille}, and 
\begin{equation*}
\widehat H_\varepsilon(\lambda)=\left\{
\begin{aligned}
&\inf_{y\in\partial \mathbb B(\lambda;\epsilon)}\big(V(y)-V(\lambda_2)+\frac{\alpha}{2}||y-\lambda_2||^2\big)\,\mbox{if}\,\,||\lambda-\lambda_1||< \epsilon\,,\\
&\inf_{x\in\partial \mathbb B(\lambda;\epsilon)}\big(V(x)-V(\lambda_1)+\frac{\alpha}{2}||x-\lambda_1||^2\big)\,\mbox{if}\,\,||\lambda-\lambda_2||< \epsilon\,,\\
&\inf_{x,y\in \partial \mathbb{B}(\lambda;\varepsilon)}\bigg(V(x)-V(\lambda_1)+V(y)-V(\lambda_2)+\frac{\alpha}{2}||x-\lambda_1||^2+\frac{\alpha}{2}||y-\lambda_2||^2\bigg)\\
&\:\:\:\:\:\:\:\:\:\:\:\text{if}\,\min_i||\lambda-\lambda_i||>\epsilon\,.
\end{aligned}
\right.
\end{equation*}
}

The asymptotic \eqref{merlemerle} and the collision-location property stated in Lemma \ref{dale-bis} - taking also into account the succeeding discussion on the simplification of the minimizer sets - both shift to the McKean-Vlasov system $(X,Y)$ and leads to the following result:

\begin{prop}
\label{torche} The first entering-time of $(X,Y)$ in the domain $\mathbb{B}(\lambda;\varepsilon)\times\mathbb{B}\left(\lambda;\varepsilon\right)$: 
\[
\beta_{\lambda,\varepsilon}(\sigma):=\inf\left\{t\geq0\,\,:\,\,(X_t,Y_t)\in\mathbb{B}\left(\lambda;\varepsilon\right)\times\mathbb{B}\left(\lambda;\varepsilon\right) \right\}\,,
\]
satisfies, for any $\lambda\in\bRb^d$ and any $\delta>0$,

\begin{equation}
\label{merlemerlemerle}
\lim_{\sigma\to0}\PP\left\{\exp\left[\frac{2}{\sigma^2}\left(\widehat{H}_\varepsilon(\lambda)-\delta\right)\right]<\beta_{\lambda,\varepsilon}(\sigma)<\exp\left[\frac{2}{\sigma^2}\left(\widehat{H}_\varepsilon(\lambda)+\delta\right)\right]\right\}=1\,,
\end{equation}
and 
\begin{equation*}
\lim_{\sigma\to0}\PP\left\{{\rm dist}\bigg((X_{\widehat \beta_{\lambda,\varepsilon}(\sigma)},Y_{\widehat \beta_{\lambda,\varepsilon}(\sigma)}),\mathbb B(\lambda;\varepsilon)\times\mathbb B(\lambda;\varepsilon)\bigg)\le \delta\right\}=1\,.
\end{equation*}
\end{prop}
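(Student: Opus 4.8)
The plan is to obtain \eqref{merlemerlemerle} (and the accompanying location estimate) by transporting the Kramers' type law \eqref{merlemerle} for the linearized pair $(x^\sigma_{T_\xi,\cdot},y^\sigma_{T_\xi,\cdot})$ over to the genuine pair $(X,Y)$ through the coupling of Lemma~\ref{lucille}, squeezing $\beta_{\lambda,\varepsilon}(\sigma)$ between two entering-times of the linearized flow at the slightly inflated and deflated radii $\varepsilon+\xi$ and $\varepsilon-\xi$, and then sending $\xi\downarrow 0$. Fix $\lambda\in\bRb^d$ and $0<\varepsilon<\varepsilon_0$, and let $\xi\in\big(0,\min(\varepsilon,\varepsilon_0-\varepsilon)\big)$ be an auxiliary parameter; let $T_\xi$ and the probability-one coupling event $A_\xi:=\big\{\sup_{t\ge T_\xi}\|X_t-x^\sigma_{T_\xi,t}\|<\xi\big\}\cap\big\{\sup_{t\ge T_\xi}\|Y_t-y^\sigma_{T_\xi,t}\|<\xi\big\}$ be as in Lemma~\ref{lucille}. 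For $\eta\in(0,\varepsilon_0)$ set $\tau_{\lambda,\eta}(\sigma):=\inf\{t\ge T_\xi\,:\,(x^\sigma_{T_\xi,t},y^\sigma_{T_\xi,t})\in\overline{\mathbb B(\lambda;\eta)}\times\overline{\mathbb B(\lambda;\eta)}\}$; since the potentials $\Psi_1,\Psi_2$ in \eqref{GlobalPot} are uniformly convex, the very reasoning that yields \eqref{merlemerle}—namely Lemma~\ref{dale-bis} time-shifted to the initial instant $T_\xi$ via the Markov property, with $(X_{T_\xi},Y_{T_\xi})$ localized as in the next paragraph—applies verbatim to each $\tau_{\lambda,\eta}(\sigma)$ with exit-cost $\widehat H_\eta(\lambda)$.

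First I would record the crude a~priori bound $\lim_{\sigma\to 0}\PP\{\beta_{\lambda,\varepsilon}(\sigma)<T_\xi\}=0$. This holds because $\beta_{\lambda,\varepsilon}(\sigma)\ge C_\varepsilon(\sigma)$ (if $(X_t,Y_t)\in\mathbb B(\lambda;\varepsilon)\times\mathbb B(\lambda;\varepsilon)$ then $\|X_t-Y_t\|\le 2\varepsilon$) while, up to the fixed horizon $T_\xi$, $(X,Y)$ stays with probability tending to $1$ uniformly close to the collision-free deterministic flows $(\phi(x_1),\phi(x_2))$, a standard consequence of the vanishing-noise regime for self-stabilizing diffusions under $(\mathbf{A})-(iv)$ and the uniform moment bounds (recall $\|\phi_t(x_1)-\phi_t(x_2)\|\ge 2\varepsilon_0>2\varepsilon$ for all $t$). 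The same localization also places $(X_{T_\xi},Y_{T_\xi})$, with high probability, in the stable set needed to invoke \eqref{merlemerle} at the radii $\varepsilon\pm\xi$.

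Then comes the squeeze. On $A_\xi\cap\{\beta_{\lambda,\varepsilon}(\sigma)\ge T_\xi\}$: at $t=\beta_{\lambda,\varepsilon}(\sigma)$ one has $(X_t,Y_t)\in\overline{\mathbb B(\lambda;\varepsilon)}\times\overline{\mathbb B(\lambda;\varepsilon)}$, hence $(x^\sigma_{T_\xi,t},y^\sigma_{T_\xi,t})\in\mathbb B(\lambda;\varepsilon+\xi)\times\mathbb B(\lambda;\varepsilon+\xi)$ and therefore $\tau_{\lambda,\varepsilon+\xi}(\sigma)\le\beta_{\lambda,\varepsilon}(\sigma)$; and at $t=\tau_{\lambda,\varepsilon-\xi}(\sigma)\,(\ge T_\xi)$ one has $(x^\sigma_{T_\xi,t},y^\sigma_{T_\xi,t})\in\overline{\mathbb B(\lambda;\varepsilon-\xi)}\times\overline{\mathbb B(\lambda;\varepsilon-\xi)}$, hence $(X_t,Y_t)\in\mathbb B(\lambda;\varepsilon)\times\mathbb B(\lambda;\varepsilon)$ and therefore $\beta_{\lambda,\varepsilon}(\sigma)\le\tau_{\lambda,\varepsilon-\xi}(\sigma)$. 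Together with the crude bound, $\tau_{\lambda,\varepsilon+\xi}(\sigma)\le\beta_{\lambda,\varepsilon}(\sigma)\le\tau_{\lambda,\varepsilon-\xi}(\sigma)$ then holds on an event of probability tending to $1$. Now $\eta\mapsto\widehat H_\eta(\lambda)$ is continuous (the only potentially delicate points are the radii $\|\lambda-\lambda_i\|=\eta$, but these are harmless since $\lambda_i$ is the global minimizer of $\Psi_i$, so the $\big(\Psi_i(\cdot)-\Psi_i(\lambda_i)\big)$-contribution vanishes there and the two one-sided expressions match): given $\delta>0$, choose $\xi$ so small that $|\widehat H_{\varepsilon\pm\xi}(\lambda)-\widehat H_\varepsilon(\lambda)|<\delta/2$. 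The lower tail of \eqref{merlemerlemerle} then follows from $\beta_{\lambda,\varepsilon}(\sigma)\ge\tau_{\lambda,\varepsilon+\xi}(\sigma)$ and the lower tail of \eqref{merlemerle} for $\tau_{\lambda,\varepsilon+\xi}(\sigma)$ with tolerance $\delta/2$, and the upper tail from $\beta_{\lambda,\varepsilon}(\sigma)\le\tau_{\lambda,\varepsilon-\xi}(\sigma)$ and the upper tail of \eqref{merlemerle} for $\tau_{\lambda,\varepsilon-\xi}(\sigma)$ with tolerance $\delta/2$ (here one also uses $\widehat H_\varepsilon(\lambda)>0$, so that the right-hand exponential eventually exceeds $T_\xi$).

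Finally, the location estimate is immediate: at $t=\beta_{\lambda,\varepsilon}(\sigma)$ both $X_t$ and $Y_t$ lie in $\overline{\mathbb B(\lambda;\varepsilon)}$ by the very definition of a first entering-time, so ${\rm dist}\big((X_t,Y_t),\mathbb B(\lambda;\varepsilon)\times\mathbb B(\lambda;\varepsilon)\big)=0$ surely—and should the statement instead be read for an exit-time $\widehat\beta_{\lambda,\varepsilon}(\sigma)$ from an enlarged domain built as in Section~\ref{sec:LinearCase}, one transports \eqref{gouverneur4} through the identical coupling argument. The one genuine obstacle in the scheme is the finite-horizon mismatch in the squeeze step—the coupling of Lemma~\ref{lucille} is available only from time $T_\xi$ on—which is precisely why the crude bound $\PP\{\beta_{\lambda,\varepsilon}(\sigma)<T_\xi\}\to 0$, together with the localization of $(X_{T_\xi},Y_{T_\xi})$, must be secured first; once past it, the remainder is a routine sandwich combined with the continuity of $\eta\mapsto\widehat H_\eta(\lambda)$.
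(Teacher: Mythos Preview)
Your proof is correct and follows essentially the same route as the paper's: use Lemma~\ref{lucille} to sandwich $\beta_{\lambda,\varepsilon}(\sigma)$ between $\tau_{\lambda,\varepsilon+\xi}(\sigma)$ and $\tau_{\lambda,\varepsilon-\xi}(\sigma)$ on an event of full probability, then invoke \eqref{merlemerle} at the shifted radii together with the continuity of $\eta\mapsto\widehat H_\eta(\lambda)$; the location estimate is immediate from the definition of $\beta_{\lambda,\varepsilon}(\sigma)$. Your treatment of the crude bound $\PP\{\beta_{\lambda,\varepsilon}(\sigma)<T_\xi\}\to 0$ and of the continuity of $\widehat H_\eta(\lambda)$ is in fact slightly more detailed than the paper's, which simply asserts $\beta_{\lambda,\varepsilon}(\sigma)\to\infty$ a.s.\ and that $\eta\mapsto\widehat H_\eta(\lambda)$ is continuous.
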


\begin{proof}Fix $\delta>0$ and let $\sigma$ and $\xi$ be small enough so that $T_\xi$ given in \eqref{carl_bis} is smaller than~$\exp\{\frac{2}{\sigma^2}(\widehat{H}_\varepsilon(\lambda)-\delta)\}$. As  $\lim_{\sigma\rightarrow 0}\beta_{\lambda,\varepsilon}(\sigma)=\infty$ a.s., the event $\{T_\xi> \beta_{\lambda,\varepsilon}(\sigma)\}$ becomes negligible at the limit $\sigma\downarrow 0$. On the remaining event $\{T_\xi\le \beta_{\lambda,\varepsilon}(\sigma)\}$, according to Lemma~\ref{lucille}, $(X_t,Y_t)$ and $(x_{T_\xi,t}^\sigma,y_{T_\xi,t}^\sigma)$ are at distance of at most $\xi$ from each others. This way, for $\xi<\varepsilon$, $\widehat{\beta}_{\lambda,\varepsilon}(\sigma)$ necessarily lies in the interval $[\tau_{\lambda,\varepsilon+\xi}(\sigma),\tau_{\lambda,\varepsilon-\xi}(\sigma)]$. As $\eta\mapsto \widehat{H}_\eta(\lambda)$ is continuous, we can further choose $\xi$ again small enough  so that $H_\varepsilon(\lambda)+\delta\ge \widehat{H}_{\varepsilon-\xi}(\lambda)+\delta'$ and $\widehat{H}_\varepsilon(\lambda)-\delta\le \widehat{H}_{\varepsilon+\xi}(\lambda)-\delta''$, for some ${\delta'}
,{\delta''}>0$. The Kramers' type law~\eqref{merlemerle} then ensures
\begin{align*}
&\lim_{\sigma\rightarrow 0}\PP\left\{\exp\left[\frac{2}{\sigma^2}\left(\widehat{H}_{\varepsilon}(\lambda)-\delta\right)\right]<\beta_{\lambda,\varepsilon}(\sigma)\right\}\\
&\ge\lim_{\sigma\rightarrow 0}\PP\left\{\exp\left[\frac{2}{\sigma^2}\left(\widehat{H}_{\varepsilon+\xi}(\lambda)-\delta''\right)\right]<\tau_{\lambda,\varepsilon+\xi}(\sigma)\right\}\ =1\,.
\end{align*}
and
\begin{align*}
&\lim_{\sigma\rightarrow 0}\PP\left\{\beta_{\lambda,\varepsilon}(\sigma)<\exp\left[\frac{2}{\sigma^2}\left(\widehat{H}_{\varepsilon}(\lambda)+\delta\right)\right]\right\}\\
&\ge \lim_{\sigma\rightarrow 0}\PP\left\{\tau_{\lambda,\varepsilon-\xi}(\sigma)<\exp\left[\frac{2}{\sigma^2}\left(\widehat{H}_{\varepsilon-\xi}(\lambda)+\delta'\right)\right]\right\}=1\,.
\end{align*}
The asymptotic of $(X_{\beta_{\lambda,\varepsilon}(\sigma)},Y_{\beta_{\lambda,\varepsilon}(\sigma)})$ is an immediate consequence of the very definition of~$\beta_{\lambda,\varepsilon}(\sigma)$.
\end{proof}

As a consequence of the above, we immediately deduce the asymptotic of the first time that the diffusions $X$ and $Y$ are at a distance $2\varepsilon$.

\begin{prop}
\label{torche2} For $\Psi_1$ and $\Psi_2$ as in~\eqref{GlobalPot}, define 
\begin{equation}\label{epsiloncollisioncost}
H_\varepsilon(\lambda):=\inf_{x\in \partial \mathbb{B}(\lambda;\varepsilon)}\big(\Psi_1(x)-\Psi_1(\lambda_1)\big)+\inf_{y\in \partial \mathbb{B}(\lambda;\varepsilon)}\big(\Psi_2(y)-\Psi_2(\lambda_2)\big)\,,
\end{equation} 
and  the threshold 

\begin{equation}\label{ColliRad2}\varepsilon_c=\inf\left\{\varepsilon\le \varepsilon_0\,:\,\inf_{\lambda\in\mathbb B(\lambda_i;\varepsilon)}\inf_{z\in\partial \mathbb B(\lambda;\varepsilon)}\Big(\Psi_j(z)-\Psi_j(\lambda_j)\Big)=\inf_\lambda H_\varepsilon(\lambda),\,\,i\neq j\in\{1,2\}\right\}\,.
\end{equation}

Let $C_\varepsilon(\sigma)$ be as in \eqref{berlin-bogota}, $\varepsilon\in(0,\varepsilon_c)$, $\underline H_\varepsilon=\min H_\varepsilon$ and let $\mathcal M_\varepsilon$ be the set of minimizer of~$H_\varepsilon$. Then, for any $\delta>0$, 
\begin{equation*}
\lim_{\sigma\to0}\PP\left\{\exp\left[\frac{2}{\sigma^2}\left(\underline H_\varepsilon-\delta\right)\right]<{C}_\varepsilon(\sigma)<\exp\left[\frac{2}{\sigma^2}\left(\underline H_\varepsilon+\delta\right)\right]\right\}=1\,,
\end{equation*}
and
\begin{equation*}
\lim_{\sigma\to0}\PP\left\{\inf_{\lambda_\varepsilon\in\mathcal M_\varepsilon}\max\bigg({\rm dist}\big(X_{C_\varepsilon(\sigma)},\mathbb B(\lambda_\varepsilon;\varepsilon)\big),{\rm dist}\big(Y_{C_\varepsilon(\sigma)},\mathbb B(\lambda_\varepsilon;\varepsilon)\big)\bigg)\ge \delta\right\}=0\,.
\end{equation*}
\end{prop}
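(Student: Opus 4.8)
The plan is to derive Proposition~\ref{torche2} from Proposition~\ref{torche} by transposing, almost verbatim, the proof of its linear counterpart Proposition~\ref{lacollision}: one replaces the stochastic gradient flows $x^\sigma,y^\sigma$ by $X,Y$, the stopping times $\tau_{\lambda,\epsilon}(\sigma)$ by $\beta_{\lambda,\varepsilon}(\sigma)$, Lemma~\ref{dale-bis} by Proposition~\ref{torche}, the costs $\widehat\h_\epsilon,\h_\epsilon$ by $\widehat H_\varepsilon,H_\varepsilon$, and the barycenter $z^\sigma$ by $Z^\sigma_t:=2^{-1}(X_t+Y_t)$. The entry point is the identity $C_\varepsilon(\sigma)=\inf_{\lambda\in\bRb^d}\beta_{\lambda,\varepsilon}(\sigma)$, proved exactly as in the introduction from the triangular inequality and the fact that $\varepsilon<\varepsilon_0$ forces $\gaga X_{C_\varepsilon(\sigma)}-Y_{C_\varepsilon(\sigma)}\drdr=2\varepsilon$, the infimum being realized at $\lambda_{\varepsilon,\sigma}:=2^{-1}(X_{C_\varepsilon(\sigma)}+Y_{C_\varepsilon(\sigma)})$. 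For the upper tail I would fix a minimizer $\lambda_\varepsilon\in\mathcal M_\varepsilon$ and observe that, since $\varepsilon<\varepsilon_c$, the preliminary discussion preceding the statement (the analog, for the uniformly convex potentials $\Psi_1,\Psi_2$ of~\eqref{GlobalPot}, of the one carried out in Section~\ref{sec:LinearCase}) shows that such $\lambda_\varepsilon$ lies outside $\mathbb B(\lambda_1;\varepsilon)\cup\mathbb B(\lambda_2;\varepsilon)$, so $\widehat H_\varepsilon(\lambda_\varepsilon)=H_\varepsilon(\lambda_\varepsilon)=\underline H_\varepsilon$; from $C_\varepsilon(\sigma)\le\beta_{\lambda_\varepsilon,\varepsilon}(\sigma)$ and the upper-tail part of Proposition~\ref{torche} at $\lambda_\varepsilon$ one gets $\lim_{\sigma\to0}\PP\{C_\varepsilon(\sigma)<\exp[\frac{2}{\sigma^2}(\underline H_\varepsilon+\delta)]\}=1$.

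For the lower tail and the persistence of the collision-location I would, as in Proposition~\ref{lacollision}, reduce both to the single barycenter estimate
\[
\lim_{\sigma\to0}\PP\big\{Z^\sigma_{C_\varepsilon(\sigma)}\in\overline{\mathcal M_{\varepsilon,\delta}}\big\}=1,\qquad \mathcal M_{\varepsilon,\delta}:=\{z\in\bRb^d\,:\,{\rm dist}(z,\mathcal M_\varepsilon)<\delta\}.
\]
Since $\gaga X_{C_\varepsilon(\sigma)}-Y_{C_\varepsilon(\sigma)}\drdr=2\varepsilon$, this estimate is equivalent to the location statement; combined with the upper tail and a finite covering $\cup_{l=1}^L\mathbb B(\lambda^l;r)\supset\overline{\mathcal M_{\varepsilon,\delta}}$ — on each ball of which $\{Z^\sigma_{C_\varepsilon(\sigma)}\in\overline{\mathbb B(\lambda^l;r)}\}$ forces $C_\varepsilon(\sigma)\ge\beta_{\lambda^l,\varepsilon+r}(\sigma)$, a stopping time of Kramers cost $\widehat H_{\varepsilon+r}(\lambda^l)>\underline H_\varepsilon-\delta$ for $r$ small (by continuity of $\eta\mapsto\widehat H_\eta(\lambda^l)$ and $\widehat H_\varepsilon\ge\underline H_\varepsilon$, valid here because $\varepsilon<\varepsilon_c$) — Proposition~\ref{torche} yields the lower tail.

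To prove the barycenter estimate I would copy the proof of~\eqref{ben}. Since $\varepsilon<\varepsilon_c$ forces $\lambda_1\notin\mathcal M_\varepsilon$, the level set $S_{\varepsilon,\xi}:=\{\lambda:\Psi_1(\lambda)-\Psi_1(\lambda_1)\ge\underline H_\varepsilon+3\xi\}$ is non-empty for $\xi$ small; choosing a minimal $M_{\varepsilon,\xi}>\varepsilon$ with $\bRb^d\setminus\mathbb B(\lambda_1;M_{\varepsilon,\xi}-\varepsilon)\subset S_{\varepsilon,\xi}$, split $\PP\{Z^\sigma_{C_\varepsilon(\sigma)}\notin\overline{\mathcal M_{\varepsilon,\delta}}\}\le I_1(\sigma)+I_2(\sigma)$, with $I_1(\sigma)=\PP\{Z^\sigma_{C_\varepsilon(\sigma)}\notin\overline{\mathbb B(\lambda_1;M_{\varepsilon,\xi})}\}$ and $I_2(\sigma)=\PP\{Z^\sigma_{C_\varepsilon(\sigma)}\in\overline{\mathbb B(\lambda_1;M_{\varepsilon,\xi})}\setminus\overline{\mathcal M_{\varepsilon,\delta}}\}$. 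The term $I_2(\sigma)\to0$ is handled exactly as in Proposition~\ref{lacollision}, by a finite covering of the compact $\overline{\mathbb B(\lambda_1;M_{\varepsilon,\xi})}\setminus\overline{\mathcal M_{\varepsilon,\delta}}$ together with Proposition~\ref{torche} and the fact that on each covering ball the relevant Kramers cost strictly exceeds $\underline H_\varepsilon$; no new ingredient is required there.

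The hard part — the only genuine departure from the linear argument — is $I_1(\sigma)$: in Proposition~\ref{lacollision} one could apply Theorem~\ref{thm:KramersDZ} directly to the stochastic gradient flow $x^\sigma$, whereas here $X$ is self-stabilizing. On $\{Z^\sigma_{C_\varepsilon(\sigma)}\notin\overline{\mathbb B(\lambda_1;M_{\varepsilon,\xi})}\}$ one has $\gaga X_{C_\varepsilon(\sigma)}-\lambda_1\drdr\ge\gaga Z^\sigma_{C_\varepsilon(\sigma)}-\lambda_1\drdr-\varepsilon>M_{\varepsilon,\xi}-\varepsilon$, so it suffices to lower bound the first exit time of $X$ from $\mathbb B(\lambda_1;M_{\varepsilon,\xi}-\varepsilon)$. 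For this I would invoke the coupling Lemma~\ref{lucille}: taking $T_\xi$ large enough that $X_{T_\xi}$ lies near $\lambda_1$ with probability tending to $1$ as $\sigma\to0$ and that $\sup_{t\ge T_\xi}\gaga X_t-x_{T_\xi,t}^\sigma\drdr<\xi$ almost surely, one compares this exit time with the exit time of the linearized flow $x_{T_\xi}^\sigma$ from $\mathbb B(\lambda_1;M_{\varepsilon,\xi}-\varepsilon-\xi)$; the latter is driven by the uniformly convex $\Psi_1$, for which that ball is stable and contains its unique minimizer $\lambda_1$, so Theorem~\ref{thm:KramersDZ} (applied conditionally on $\mathcal F_{T_\xi}$ via the Markov property of $(x^\sigma,y^\sigma)$, $X_{T_\xi}$ being a.s. bounded, exactly as in the proof of Proposition~\ref{torche}) gives an exponential lower bound with rate arbitrarily close to $\inf_{z\in\partial\mathbb B(\lambda_1;M_{\varepsilon,\xi}-\varepsilon-\xi)}(\Psi_1(z)-\Psi_1(\lambda_1))$, which exceeds $\underline H_\varepsilon+3\xi$ up to an error tending to $0$ with $\xi$. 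Combined with the upper tail of $C_\varepsilon(\sigma)$ and $\xi$ taken small, $I_1(\sigma)\to0$. The real work thus lies in this coupling step and its bookkeeping — the finite deterministic shift $T_\xi$, the random-but-bounded initial states $X_{T_\xi},Y_{T_\xi}$, and the $\xi$-to-$\varepsilon$ comparisons — everything else being a transcription of the covering arguments of Proposition~\ref{lacollision}.
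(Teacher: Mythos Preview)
Your proposal is correct and follows essentially the same approach as the paper, which simply states that ``the proof is readily similar to the one of Proposition~\ref{lacollision} replacing $z_t^\sigma$ by the barycenter $Z_t:=2^{-1}(X_t+Y_t)$''. Your elaboration of the $I_1(\sigma)$ step via Lemma~\ref{lucille} is the right way to fill in the one place where the transcription is not literal (the paper leaves this implicit), and the rest is indeed a verbatim covering argument with Proposition~\ref{torche} substituting for Lemma~\ref{dale-bis}.
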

\begin{proof}The assumption $\varepsilon<\varepsilon_c$ immediately ensures $\inf_{\lambda}\widehat H_\epsilon(\lambda)=\inf_{\lambda} H_\epsilon(\lambda)$,
and the proof is readily  similar to the one of Proposition \ref{lacollision} replacing $z_t^\sigma$ by the barycenter $Z_t:=2^{-1}(X_t+Y_t)$. 
\end{proof}

In the same way Proposition~\ref{lacollision} yielded Theorem~\ref{lacollisionthm}, Proposition \ref{torche2} yields to Theorem~\ref{thm:main1}.

\section{On the first collision of the particle systems}\label{sec:ParticleCase}

In this section, we establish the Kramers' type law driven the asymptotic of the first collision-time of the mean-field interacting particle systems :
\begin{subequations}
\begin{equation}\label{MFSP1}
X^{i,N}_t=x_1+\sigma B^i_t-\int_0^t \nabla V(X^{i,N}_s)\,ds-\frac{\alpha}{N}\sum_{j=1}^N\int_0^t (X^{i,N}_s-X^{j,N}_s)\,ds\,,\,t\ge 0,
\end{equation}
and
\begin{equation}\label{MFSP2}
Y^{i,N}_t=x_2+\sigma \widetilde B^i_t-\int_0^t \nabla V(Y^{i,N}_s)\,ds-\frac{\alpha}{N}\sum_{j=1}^N\int_0^t (Y^{i,N}_s-Y^{j,N}_s)\,ds\,.\,t\ge 0.
\end{equation}
\end{subequations}
The assumption $(\mathbf{A})-(iv)$ imposes that each pair of particles, $X^{i,N}$ and $Y^{i,N}$, is attracted to the two wells of $V$. As in Section \ref{sec:SelfStabilizingCase}, we also still assume that $x_1$ and $x_2$, as well as $\lambda_1$ and~$\lambda_2$, are at a distance at least $2\varepsilon_0$ from each others.

As a preliminary remark, let us point out that the systems \eqref{MFSP1} and \eqref{MFSP2} can also be equivalently formulated in the form of stochastic gradients flows, by setting $\mathbf X^N_t:=(X^{1,N}_t,\cdots,X^{N,N}_t)$ and $\mathbf Y^N_t:=(Y^{1,N}_t,\cdots,Y^{N,N}_t)$:
\[
\mathbf X^{N}_t=\mathbf{x}_1^N+\sigma \mathbf B_t-\int_0^t \nabla\Upsilon_N(\mathbf X^N_s)\,ds,\,t\ge 0\,,
\]
\[
\mathbf Y^{N}_t=\mathbf{x}_2^N+\sigma \widetilde{\mathbf B}_t-\int_0^t \nabla\Upsilon_N(\mathbf Y^N_s)\,ds,\,t\ge 0\,,
\]
where $\mathbf B:=(B^1,\cdots,B^N)$ and $\widetilde{\mathbf B}:=(\widetilde{B}^1,\cdots,\widetilde{B}^N)$ define independent $\mathbb R^{dN}$-Brownian motions and the driving potential is given by
\[
\Upsilon_N:\mathbf x^N=(x_1,\cdots,x_N)\in\mathbb R^{dN}\mapsto \Upsilon_N(\mathbf x^N)=\sum_{i=1}^NV(x_i)+\frac{\alpha}{4N}\sum_{i,j=1}^N||x_i-x_j||^2.
\]
For $\overline{\mathbf{x}}^N:=\frac{1}{N}\sum_{j=1}^Nx_j$ the empirical mean of $\mathbf x^N$, one can easily check that $\Upsilon^N$ rewrites into
\[
\Upsilon_N(x^N)=\sum_{i=1}^NV(x_i)+\frac{\alpha}{2N}\sum_{i=1}^N ||x_i-\overline{\mathbf{x}}^N||^2\,.
\] 
Roughly, the proof steps to establish Theorem \ref{thm:main2} are similar to the ones leading to Theorem \ref{thm:main1}. Namely, observing that $C^i_{\varepsilon,N}(\sigma)$ can be equivalently reformulated into
\[
C^i_{\varepsilon,N}(\sigma)=\inf_{\lambda\in\mathbb R^d}\beta^i_{\lambda,\varepsilon,N}(\sigma)\,,
\]
\[
\beta^i_{\lambda,\varepsilon,N}(\sigma)=\inf\{t>0\,:\,(X^{i,N}_t,Y^{i,N}_t)\in\mathbb B(\lambda;\epsilon)\times\mathbb B(\lambda;\epsilon)\},
\]
after establishing a coupling estimate between the interacting particles, and their ``linear'' versions (see \eqref{xi} and \eqref{yi} below), we extract a first Kramers' type law for $\beta^i_{\lambda,\varepsilon,N}(\sigma)$. At this step, the related exit-cost is given by
 \begin{align*}
&\inf_{\mathbf{x}^N\in \partial B^{i,N}(\lambda;\varepsilon)}\Upsilon_N(\mathbf x^N)-\Upsilon_N(\lambda_1,\cdots,\lambda_1)+\inf_{\mathbf{y}^N\in \partial B^{i,N}(\lambda;\varepsilon)}\Upsilon_N(\mathbf y^N)-\Upsilon_N(\lambda_2,\cdots,\lambda_2),
\end{align*}
for
\[
B^{i,N}(\lambda;\varepsilon)=\left\{\mathbf x^N=(x_1,...,x_N)\in\mathbb R^{dN}\,;\,x_i\in \mathbb B(\lambda;\varepsilon)\right\}\,.
\]
Following, we will derive the  Kramers' type law of $\inf_{\lambda\in\mathbb R^d}\beta^i_{\lambda,\varepsilon,N}(\sigma)$, replicating the proof arguments of Proposition~\ref{lacollision} and next deducing Theorem~\ref{thm:main2}. The inherent difficulty here relies on establishing coupling estimates which, for the case of self-stabilizing diffusions, was essentially a consequence of establishing \eqref{carl_bis}. For the particle systems \eqref{MFSP1}-\eqref{MFSP2}, we need to establish an analog of~\eqref{carl_bis}, dealing with the empirical means $\overline{X}^N_t=\frac{1}{N}\sum_{j=1}^NX^{j,N}_t$ and~$\overline{Y}^N_t=\frac{1}{N}\sum_{j=1}^NY^{j,N}_t$, in place of the continuous averages. One may argue that the coupling could be achieved by using some propagation of chaos properties of \eqref{MFSP1}-\eqref{MFSP2} towards \eqref{SSD1}-\eqref{SSD2}. This argument could be executed if uniform-in-time propagation of chaos was holding true. However, under our current assumptions, this is not the case. Indeed, the uniform convexity stated in $(\mathbf{A})-(i)$ implies $-\nabla V$ is ``contractively'' one-sided Lipschitz at infinity, namely: for some $R''>0$ large enough
  \[
  -\left( \nabla V(x)-\nabla V(y)\right)(x-y)\leq -\lambda ||x-y||^2, \forall x,y\in\mathbb R^d\,\text{such that}\,||x||,||y||\geq R''\,.
  \]
Combined with its local Lipschitz property, $-\nabla V$ is so globally one-sided Lipschitz continuous, and, by classical coupling arguments (e.g. \cite{BRTV}), we recover the propagation of chaos property: for any finite time-horizon $T$, and, for $(X^i,Y^i)$ the copies of \eqref{SSD1} and \eqref{SSD2} generated by $(B^i,\widetilde{B}^i)$,
 \begin{equation*}
\sup_{0\leq t\leq T}\mathbb{E}\left\{||X_t^i-X_t^{i,N}||^2\right\}+\sup_{0\leq t\leq T}\mathbb{E}\left\{||Y_t^i-Y_t^{i,N}||^2\right\}\le \frac {C(T)}{N}\,.
\end{equation*}
Without further assumption on the convexity of $V$, the constant $C(T)$ depends exponentially on $T$. If $V$ was further assumed to be globally convex, the constant would not depend anymore on $T$ (e.g. \cite{CGM}) and, owing to Lemma~\ref{lucille}, the asymptotics obtained in Proposition~\ref{torche} would apply to $\beta^i_{\lambda,\varepsilon,N}(\sigma)$, for $N$ large enough. The non-(global) convexity of $V$ provides some technical difficulties, which prevent to rely on the asymptotics previously obtained for $(X,Y)$, and rather impose to exhibit analog arguments to the self-stabilizing case. 

Given $0<T<\infty$, define the diffusions $x^{i,\sigma}_T$ and $y^{i,\sigma}_T$ as:
\begin{subequations}
\begin{equation}
\label{xi}
x_{T,t}^{i,\sigma}=X^{i,N}_T+\sigma (B_t^i-B_T^i)-\int_T^t\nabla V(x_{T,s}^{i,\sigma})ds-\alpha\int_T^t(x_{T,s}^{i,\sigma}-\lambda_1)ds\,,\,t\ge T\,,
\end{equation}
and
\begin{equation}
\label{yi}
y_{T,t}^{i,\sigma}=Y^{i,N}_T+\sigma (\widetilde{B}_t^i-\widetilde{B}_T^i)-\int_T^t\nabla V(y_{T,s}^{i,\sigma})ds-\alpha\int_T^t(y_{T,s}^{i,\sigma}-\lambda_2)ds\,,\,t\ge T\,,
\end{equation}
\end{subequations}
and establish a weaker analog to Lemma \ref{lucille}, with 
\begin{prop}
\label{lucille2} For $\varepsilon\in(0,\varepsilon_c)$, let $H_\varepsilon$ be as in \eqref{epsiloncollisioncost} and $\underline H_\varepsilon$ its minima. Then, for any $\lambda\in\mathbb R^d$ and any $\xi>0$, provided $\xi>0$ is small enough, there exists $N_{\varepsilon,\xi}$ and $T_{\varepsilon,\xi}$, both positive, finite and independent of $\sigma$, such that, for any $N\ge N_{\varepsilon,\xi}$,
\begin{align*}
&\lim_{\sigma\rightarrow 0}\PP\left\{\max_{T_{\xi,N}\le t\le \exp[\frac 2{\sigma^2}(\underline H_\varepsilon+2)]}||X_t^{i,N}-x_{T_{\xi,N},t}^{i,\sigma}||\geq\xi\right\}\\
&=\lim_{\sigma\rightarrow 0}\PP\left\{\max_{T_{\xi,N}\le t\le \exp[\frac 2{\sigma^2}(\underline H_\varepsilon+2)]}||Y_t^{i,N}-y_{T_{\xi,N},t}^{i,\sigma}||\geq\xi\right\}=0\,.
\end{align*}
 \end{prop}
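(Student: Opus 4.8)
The plan is to follow the route of Section~\ref{sec:SelfStabilizingCase}: first obtain a particle analog of the localization estimate~\eqref{carl_bis} for the empirical means $\overline X^N_t:=\frac1N\sum_{j=1}^NX^{j,N}_t$ and $\overline Y^N_t:=\frac1N\sum_{j=1}^NY^{j,N}_t$ --- valid, with probability tending to $1$, on the window $\big[0,\exp(\tfrac2{\sigma^2}(\underline H_\varepsilon+2))\big]$ --- and then run the contraction argument of Lemma~\ref{lucille} particle by particle against the linearized flows \eqref{xi}--\eqref{yi}. The genuinely new input is the first point: unlike the McKean--Vlasov pair, the finite-$N$ systems are not ``trapped'' near $(\lambda_1,\dots,\lambda_1)$ and $(\lambda_2,\dots,\lambda_2)$ for all times, and this is precisely what forces both the truncated time horizon and the threshold $N_{\varepsilon,\xi}$ in the statement.

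\emph{Localizing the empirical means.} Set $S^X_t:=\frac1N\sum_{i=1}^N||X^{i,N}_t-\overline X^N_t||^2$, and $S^Y_t$ analogously. Since the interaction term has zero empirical mean, $\overline X^N_t=x_1+\tfrac{\sigma}{N}\sum_j B^j_t-\int_0^t\overline{\nabla V}^N_s\,ds$ with $\overline{\nabla V}^N_s:=\frac1N\sum_i\nabla V(X^{i,N}_s)$. Plugging this into the equation for $X^{i,N}_t-\overline X^N_t$, applying It\^o's formula, summing over $i$, and using $\frac1N\sum_i(X^{i,N}_t-\overline X^N_t)\cdot\nabla V(\overline X^N_t)=0$ together with the pointwise bound $(\nabla V(x)-\nabla V(y))\cdot(x-y)\ge\theta||x-y||^2$ (immediate from the definition of $\theta$ in $(\mathbf{A})-(iii)$), one obtains
\[
dS^X_t\le -2(\alpha+\theta)S^X_t\,dt+c\,\sigma^2\,dt+d\mathcal M^X_t ,
\]
for a martingale $\mathcal M^X$ with $d\langle\mathcal M^X\rangle_t\le c\,\tfrac{\sigma^2}{N}S^X_t\,dt$; in particular $\sup_{t\ge0}\mathbb E[S^X_t]=O(\sigma^2)$. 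Since, for the $\mathbb R^{dN}$-diffusion $\mathbf X^N$ driven by $\Upsilon_N$, the cost to reach from a neighbourhood of $(\lambda_1,\dots,\lambda_1)$ a configuration with $\frac1N\sum_i||x_i-\overline{\mathbf x}^N||^2=\kappa'$ is bounded below by a quantity of order $N$ (the Hessian $\nabla^2V(\lambda_1)$ being positive definite), a Freidlin--Wentzell estimate in the spirit of~\cite{ESAIM_particles} gives: for every $\kappa'>0$ there is $N'_{\kappa'}$ such that, for all $N\ge N'_{\kappa'}$,
\[
\lim_{\sigma\to0}\PP\Big\{\sup_{0\le t\le\exp[\frac2{\sigma^2}(\underline H_\varepsilon+2)]}\big(S^X_t+S^Y_t\big)>\kappa'\Big\}=0 ,
\]
the condition $N\ge N'_{\kappa'}$ being exactly what makes this order-$N$ barrier exceed $\underline H_\varepsilon+2$. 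On this event one may confine all particles in a fixed compact (leaving a large enough compact has unbounded cost), so the remainder $\overline{\nabla V}^N_s-\nabla V(\overline X^N_s)$ is $O(\sqrt{S^X_s})=O(\sqrt{\kappa'})$ and $\overline X^N$ is a perturbation of amplitude $O(\sigma)+O(\sqrt{\kappa'})$ of the flow $\dot\phi_t=-\nabla V(\phi_t)$, $\phi_0=x_1$. As $x_1\in\mathcal G(\lambda_1)$ and $\nabla^2 V(\lambda_1)$ is positive definite, this flow reaches and stays in a small neighbourhood of $\lambda_1$ on which the perturbed dynamics contracts; combining this with the confinement above (which prevents the cloud, hence $\overline X^N$, from performing a collective excursion before $\exp[\tfrac2{\sigma^2}(\underline H_\varepsilon+2)]$) yields $N_{\varepsilon,\xi}$ and $T_{\varepsilon,\xi}$, finite and $\sigma$-independent, such that for every prescribed $\kappa>0$ and all $N\ge N_{\varepsilon,\xi}$,
\[
\lim_{\sigma\to0}\PP\Big\{\sup_{T_{\varepsilon,\xi}\le t\le\exp[\frac2{\sigma^2}(\underline H_\varepsilon+2)]}\big(||\overline X^N_t-\lambda_1||+||\overline Y^N_t-\lambda_2||\big)>\kappa\Big\}=0 .
\]

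\emph{The coupling.} Given $\xi>0$, choose $\kappa<\frac{\alpha+\theta}{\alpha}\xi$ (and $N_{\varepsilon,\xi}$ large enough for that $\kappa$), and put $T:=T_{\xi,N}:=T_{\varepsilon,\xi}$. On the event $\mathcal E$ of the previous step (of probability $\to1$), for $T\le t\le\exp[\frac2{\sigma^2}(\underline H_\varepsilon+2)]$ the process $\zeta^i_t:=||X^{i,N}_t-x_{T,t}^{i,\sigma}||^2$ is differentiable --- the Brownian increments cancel --- and, adding and subtracting $\alpha(X^{i,N}_t-\lambda_1)$ so as to isolate $\nabla\Psi_1$, which is $(\alpha+\theta)$-convex by \eqref{GlobalPot},
\[
\frac{d}{dt}\zeta^i_t\le-2(\alpha+\theta)\zeta^i_t-2\alpha\big(X^{i,N}_t-x_{T,t}^{i,\sigma}\big)\cdot\big(\lambda_1-\overline X^N_t\big)\le-2(\alpha+\theta)\zeta^i_t+2\alpha\kappa\sqrt{\zeta^i_t} .
\]
Since $\zeta^i_T=0$, Lemma~\ref{randal} yields $\zeta^i_t\le\big(\tfrac{\alpha}{\alpha+\theta}\kappa\big)^2<\xi^2$ throughout that window, on $\mathcal E$; the identical computation with $\lambda_2$ and $\overline Y^N$ handles $Y^{i,N}$ against $y_{T,t}^{i,\sigma}$. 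As $\PP(\mathcal E^c)\to0$, the two claimed limits follow.

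\emph{Main difficulty.} The crux is the first step: controlling the empirical variances, and hence the empirical means, over the \emph{exponentially long} window $\big[0,\exp(\tfrac2{\sigma^2}(\underline H_\varepsilon+2))\big]$ --- equivalently, proving that the buffer ``$+2$'' sits strictly inside the collective-escape window once $N$ is large. This is where one genuinely needs Freidlin--Wentzell large-deviation estimates for the $\mathbb R^{dN}$ diffusion $\mathbf X^N$ and the observation that the pertinent barrier grows linearly in $N$; it is also the reason the statement carries both the threshold $N_{\varepsilon,\xi}$ and the truncated horizon. The remaining steps transcribe almost verbatim the proofs of~\eqref{carl_bis} and of Lemma~\ref{lucille}.
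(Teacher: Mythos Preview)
Your coupling step (the second half) is exactly what the paper does: on the event where the empirical mean is $\kappa$-close to the well, the difference $\zeta^i_t=||X^{i,N}_t-x^{i,\sigma}_{T,t}||^2$ satisfies the same differential inequality as in Lemma~\ref{lucille}, and Lemma~\ref{randal} closes it. Nothing to add there.

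The divergence is in how you localize the empirical means. The paper isolates this as a separate Lemma (labelled~\ref{carl2_reborn}) and attacks the constraint $||\overline{\mathbf x}^N-\lambda_i||=\kappa$ \emph{directly}: a Lagrange--multiplier computation shows that any minimizer of $\Upsilon_N$ on this constraint has all components equal to a common $\lambda^*$ with $||\lambda^*-\lambda_i||=\kappa$, whence the exit cost is exactly $N\bigl(V(\lambda^*)-V(\lambda_i)\bigr)$, manifestly of order $N$. One Freidlin--Wentzell application on a truncated basin $\mathcal D^N$ then suffices.

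Your route is more indirect: first control the empirical variance $S^X_t$, then argue that on the small-spread event the mean $\overline X^N$ follows a perturbed one-particle gradient flow with effective noise $\sigma/\sqrt N$, and only then conclude localization near $\lambda_1$. This can be made to work, but as written there is a gap. The set $\{S^X<\kappa'\}$ contains \emph{both} wells $(\lambda_1,\dots,\lambda_1)$ and $(\lambda_2,\dots,\lambda_2)$, so small spread by itself says nothing about where the mean sits; your sentence ``combining this with the confinement above (which prevents the cloud \dots\ from performing a collective excursion)'' is precisely where the argument is incomplete --- compact confinement does not preclude the cloud migrating coherently from $\lambda_1$ to $\lambda_2$ while keeping $S^X$ small. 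What actually blocks that excursion is a second Freidlin--Wentzell estimate on the approximate one-particle dynamics for $\overline X^N$ (barrier $\asymp N$ because of the reduced noise $\sigma/\sqrt N$), but you have not stated it, and making it rigorous is delicate since the ``approximate equation'' for $\overline X^N$ holds only on an event. The paper's Lagrange--multiplier shortcut bypasses both the detour through $S^X$ and this conditioning issue in one clean stroke.
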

 %Old:
Compared to Lemma \ref{lucille}, Proposition \ref{lucille2} only ensures the coupling between $(X^{i,N},Y^{i,N})$ and $(x^{i,\sigma}_T,y^{i,\sigma}_T)$ is effective almost surely at the limit $\sigma\downarrow 0$ and over an interval restrained by a referential right-hand limit $\exp[\frac 2{\sigma^2}(\underline H_\varepsilon+2)]$. While this is enough to carry the same procedure as in Section~\ref{sec:SelfStabilizingCase} and derive the Kramers' type law for $C^i_{\varepsilon,N}(\sigma)$, the reason for these limitations are rather intuitive. As the empirical means in \eqref{MFSP1}-\eqref{MFSP2} are not deterministic,  for $\sigma>0$ arbitrary, $\overline{X}^N$ and $\overline{Y}^N$ wander far away from $\lambda_1$ and $\lambda_2$ at large (but finite) time. As the noises elapse, these events become negligible.  This is why we need to restrict ourselves to a characteristic finite time interval where $\overline{X}^N$ and $\overline{Y}^N$ are arbitrarily close to~$\lambda_1$ and~$\lambda_2$. Since it is sufficient for this time to be strictly larger than $\exp[2\underline H_\varepsilon/{\sigma^2}]$, we choose $\exp[2(\underline H_\varepsilon+2)/{\sigma^2}]$ as a possible upper-bound.

\begin{lem}\label{carl2_reborn} For $\kappa>0$, let $\tau^N_\kappa(\sigma)$ be the first time the diffusion $(\mathbf{X}^{N},\mathbf{Y}^{N})$
exits the domain $\mathcal B^N(\lambda_1^N;\kappa)\times \mathcal B^N(\lambda_2^N;\kappa)$ for  
\[
\mathcal B^N(\lambda_i^N;\kappa):=\{\mathbf x^N\in\mathbb R^{dN}\,:\,||\frac 1{N}\sum_{j=1}^N x_j-\lambda_i||<\kappa\},\:\:\:\:\:\:\mathbf{\lambda^N_i}:=(\lambda_i,\cdots,\lambda_i)\,.
\] 
Therefore, for any $\kappa>0$ and for any $\varepsilon\in(0,\varepsilon_0)$, there exists $N_{\varepsilon,\kappa}$ such that for $N\ge N_{\varepsilon,\kappa}$,
\[
\lim_{\sigma\rightarrow 0}\mathbb P\left\{\tau^N_\kappa(\sigma)\le \exp\big[\frac{2}{\sigma^2}(\underline{H}_\varepsilon+2)\big]\right\}=0\,.
\]
\end{lem}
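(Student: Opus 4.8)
The plan is to reduce this $\mathbb R^{dN}$-statement to a one-dimensional exit problem for the two empirical means $\overline{X}^N_t:=\frac1N\sum_j X^{j,N}_t$ and $\overline{Y}^N_t:=\frac1N\sum_j Y^{j,N}_t$. Averaging \eqref{MFSP1} over $i$, the pairwise interaction cancels by antisymmetry and one is left with the $\mathbb R^d$-dynamics $\overline{X}^N_t=x_1+\sigma\overline{B}^N_t-\int_0^t\nabla V(\overline{X}^N_s)\,ds+\int_0^t R^N_s\,ds$, where $\overline{B}^N:=\frac1N\sum_j B^j$ is a Brownian motion with $\langle\overline{B}^N\rangle_t=\frac tN\,\mathrm{Id}$ (effective noise intensity $\sigma/\sqrt N$) and $R^N_s:=\nabla V(\overline{X}^N_s)-\frac1N\sum_i\nabla V(X^{i,N}_s)$; similarly for $\overline{Y}^N$ around $\lambda_2$. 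Thus each barycenter behaves as the gradient flow of $V$ driven by the \emph{rescaled} noise $\sigma/\sqrt N$, up to the remainder $R^N$, which is small as soon as the cloud stays concentrated about its mean. Since $\lambda_1$ is a strict local minimum of $V$, escaping a neighbourhood of it costs a positive quasi-potential for such a diffusion, so the associated exit time is exponentially large in $N/\sigma^2$; taking $N$ large pushes it beyond $\exp[\tfrac2{\sigma^2}(\underline H_\varepsilon+2)]$, which is the content of the lemma.

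Fix $\kappa\in(0,\varepsilon_0)$ and set $\mathcal T_\sigma:=\exp[\tfrac2{\sigma^2}(\underline H_\varepsilon+2)]$. The argument rests on three ingredients. First, a \emph{cloud-confinement} step: since $\Upsilon_N$ is confining (a consequence of the uniform convexity of $V$ at infinity from $(\mathbf A)-(i)$ together with $\alpha>0$), a Lyapunov / Freidlin--Wentzell argument in $\mathbb R^{dN}$ provides a radius $R_0$ \emph{independent of $N$} with $\PP\{\exists\,t\le\mathcal T_\sigma:\ \frac1N\sum_i(1+\|X^{i,N}_t\|+\|Y^{i,N}_t\|)^{2n}>R_0\}\to0$ as $\sigma\to0$ for every $N$ — only the \emph{averaged} moments need controlling, and these obey a mean-reverting estimate with an $N$-independent escape cost. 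Second, the \emph{variance-confinement} step, the heart of the proof. Let $\Sigma^N_t:=\frac1N\sum_i\|X^{i,N}_t-\overline X^N_t\|^2$ and $U_t:=N\Sigma^N_t$. Itô's formula, the global one-sided bound $(\nabla V(x)-\nabla V(y))\cdot(x-y)\ge\theta\|x-y\|^2$ from $(\mathbf A)-(i)$, and the cancellation $\sum_i(X^{i,N}_t-\overline X^N_t)=0$ give $dU_t\le\big(-2(\alpha+\theta)U_t+C_1\sigma^2N\big)dt+C_2\sigma\sqrt{U_t}\,d\beta_t$, $U_0=0$, for a scalar Brownian motion $\beta$ and constants $C_1,C_2$ depending only on $d$; crucially the mean-reversion rate $2(\alpha+\theta)$ is a fixed positive number by the synchronization $(\mathbf A)-(iii)$, with no dependence on $R_0$ since the remainder $R^N$ does not appear in this computation. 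Comparing $U$ with the genuine CIR diffusion carrying this drift and diffusion, and computing its scale function $s'(u)\propto u^{-C_1N/C_2^2}e^{4(\alpha+\theta)u/(C_2^2\sigma^2)}$, the probability that $U$ ever reaches level $\eta_0^2N$ before time $\mathcal T_\sigma$ is bounded by (number of attempts)$\,\times\,$(one-excursion probability) $\lesssim\mathcal T_\sigma\exp[-c\,\eta_0^2N/\sigma^2]$ for some $c=c(\alpha,\theta,d)>0$, which tends to $0$ as $\sigma\to0$ \emph{provided $c\,\eta_0^2N>2(\underline H_\varepsilon+2)$}; this is the first place forcing $N$ to be large. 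Here $\eta_0>0$ is a fixed small radius, chosen in the next step.

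Third, the \emph{barycenter-exit} step. On the event $\mathcal E_\sigma$ where both confinement properties hold throughout $[0,\mathcal T_\sigma]$ — which has probability $\to1$ as $\sigma\to0$ — the polynomial growth of $\nabla^2V$ and Cauchy--Schwarz give $\|R^N_t\|\le C_3\,\Sigma^N_t{}^{1/2}\le C_3\eta_0$ whenever $\overline X^N_t$ lies in a fixed compact neighbourhood of $\lambda_1$, with $C_3$ depending only on $R_0$ and $V$. Choose $\kappa_1\le\kappa$ small enough that $\mathbb B(\lambda_1;\kappa_1)$ is stable by $-\nabla V$ (possible since $\lambda_1$ is a strict local minimum) with $\inf_{\|x-\lambda_1\|=\kappa_1}\big(-\nabla V(x)\big)\cdot\tfrac{\lambda_1-x}{\|\lambda_1-x\|}=:c_1>0$, then take $\eta_0<c_1/(2C_3)$, so that $\mathbb B(\lambda_1;\kappa_1)$ is still stable under the perturbed drift $-\nabla V+R^N$ and, by continuity of the quasi-potential in the drift, the exit cost stays above $\underline h_1:=\tfrac12\inf_{\|x-\lambda_1\|=\kappa_1}(V(x)-V(\lambda_1))>0$. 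Applying the Freidlin--Wentzell exit estimate of Theorem \ref{thm:KramersDZ} — in the perturbed-drift form, which is legitimate here since the remainder is an adapted perturbation of uniform size $\le C_3\eta_0$ (cf. the remark following Theorem \ref{thm:KramersDZ} and \cite{HIP}) — to $\overline X^N$ with noise level $\sigma/\sqrt N$ shows that $\overline X^N$ does not exit $\mathbb B(\lambda_1;\kappa_1)$ before $\exp[\tfrac{2N}{\sigma^2}(\underline h_1-\delta')]$ with probability $\to1$. Since $\mathbb B(\lambda_1;\kappa_1)\subset\mathbb B(\lambda_1;\kappa)$, choosing $N$ with $N\underline h_1>\underline H_\varepsilon+2$ and $\delta'$ small makes this time exceed $\mathcal T_\sigma$; the same analysis applies verbatim to $\overline Y^N$ near $\lambda_2$. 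A union bound over the complement of $\mathcal E_\sigma$ and the two barycenter-exit events then yields $\PP\{\tau^N_\kappa(\sigma)\le\mathcal T_\sigma\}\to0$, and $N_{\varepsilon,\kappa}$ may be taken to be any integer exceeding both $2(\underline H_\varepsilon+2)/(c\eta_0^2)$ and $(\underline H_\varepsilon+2)/\underline h_1$.

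The main obstacle is the variance-confinement step: the empirical dispersion $\Sigma^N_t$ must be kept below a fixed threshold \emph{uniformly over the exponentially long window} $[0,\mathcal T_\sigma]$, not merely in expectation or on bounded horizons, and it is precisely the competition between the $\exp[2(\underline H_\varepsilon+2)/\sigma^2]$ attempts and the $\exp[-c\eta_0^2N/\sigma^2]$ cost of a single large excursion of the CIR-type process $U_t$ that both forces ``$N$ large'' and fixes the threshold $N_{\varepsilon,\kappa}$. A secondary point is that $\mathbb B(\lambda_i;\kappa)$ itself need not be stable by $-\nabla V$ when $\kappa$ is large; this is circumvented by localizing to the fixed smaller stable ball $\mathbb B(\lambda_i;\kappa_1)$ and by checking that the adapted remainder $R^N$ does not spoil the Freidlin--Wentzell lower bound.
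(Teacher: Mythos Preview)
Your strategy is quite different from the paper's. The paper never projects onto the empirical mean; it works entirely in $\mathbb R^{dN}$, exploiting that $\mathbf X^N$ is already a stochastic gradient flow for $\Upsilon_N$. The core observation is a Lagrange-multiplier computation: minimising $\Upsilon_N(\mathbf z^N)-\Upsilon_N(\lambda_1^N)$ under the constraint $\|\frac1N\sum_j z_j-\lambda_1\|=\kappa$ forces all components $z_j$ to coincide (by the synchronization convexity of $z\mapsto V(z)+\alpha z$), so the constrained minimum equals $N\inf_{\|z-\lambda_1\|=\kappa}\bigl(V(z)-V(\lambda_1)\bigr)$, which diverges with $N$. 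One then builds a sublevel set $\mathcal D^N\subset\mathcal B^N(\lambda_1^N;\kappa)$ in the basin of $\lambda_1^N$ with exit cost exactly $\underline H_\varepsilon+2$, and applies Theorem~\ref{thm:KramersDZ} directly to the $\mathbb R^{dN}$-diffusion. No variance control, no CIR comparison, no perturbed drift.

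Your route is conceptually appealing but has real gaps as written. First, the cloud-confinement step is not justified: you assert that a Freidlin--Wentzell argument in $\mathbb R^{dN}$ yields an $N$-independent bound on $\frac1N\sum_i(1+\|X^{i,N}_t\|)^{2n}$ uniformly over $[0,\mathcal T_\sigma]$, but this is essentially of the same nature as the statement you are proving and cannot simply be invoked. Second, even granting $\Sigma^N_t\le\eta_0^2$ and $\overline X^N_t\in\mathbb B(\lambda_1;\kappa_1)$, the bound $\|R^N_t\|\le C_3\sqrt{\Sigma^N_t}$ requires controlling $\frac1N\sum_i\|X^{i,N}_t\|^{4n-2}$ (through the polynomial growth of $\nabla^2V$), which is a higher moment than either of the previous steps provides; a single particle at distance $\sqrt N\,\eta_0$ from the barycenter is compatible with $\Sigma^N_t\le\eta_0^2$ yet makes $R^N$ blow up. Third, Theorem~\ref{thm:KramersDZ} concerns Markovian gradient diffusions, and your $\overline X^N$ carries an adapted, non-Markovian perturbation $R^N$; the remark after the theorem and \cite{HIP} relax the Lipschitz assumption on the drift, not its Markovian structure, so this application needs a genuine additional argument. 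The paper's $\mathbb R^{dN}$ route sidesteps all three issues at once.
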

\begin{proof}  As a preliminary stage, let us establish some properties on the exit-cost related to~$\tau^N_\kappa(\sigma)$ with: in \textbf{Step 1}, we check that $\Upsilon^N$ achieves its minimum either on $(\lambda_1,\cdots,\lambda_1)$ or on $(\lambda_2,\cdots,\lambda_2)$; in \textbf{Step 2}, we show that the exit-cost related to $\mathcal B^N(\lambda_i^N;\kappa)$, explicitly given by
\[\inf_{\mathbf x^N\in\partial \mathcal B^N(\mathbf{\lambda_i^N};\kappa)}\Big(\Upsilon^N(\mathbf x^N)-\Upsilon^N(\mathbf{\lambda_i^N})\Big)\,,
\]
 grows to $\infty$ as $N\uparrow \infty$. The third step finally consists in showing that these properties imply the claim.

\noindent
\textbf{Step 1}: Let $\mathbf{x^{N,\star}}=(x^\star_1,...,x^\star_N)$ be an arbitrary minimizer of $\Upsilon^N$. Then, for any $1\le i\le N$,
\[
\nabla_{x_i}\Upsilon^N(\mathbf{x^{N,\star}})=\nabla V(x^\star_i)+\alpha\Bigg(x^\star_i-\frac{1}{N}\sum_{j=1}^Nx^\star_j\Bigg)=0.
\]
The synchronization condition $(\mathbf{A})-(iii)$ ensuring that, for any $m\in\mathbb R^d$, $x\in\mathbb R^d\mapsto V(x)+F(x-m)$ is uniformly convex, the components $x^\star_i$ are identically given by
\[
x_i^\star=\Big(\nabla V+\alpha{\rm Id}\Big)^{-1}\Bigg(\frac{\alpha}{N}\sum_{j=1}^Nx^\star_j\Bigg).
\]
Necessarily $\frac 1{N}\sum_{j=1}^Nx^\star_j=x_1^\star$ and
 $\nabla \Upsilon^N(\mathbf{x^{N,\star}})$ reduces to $\big(\nabla V(x^\star_1),...,\nabla V(x^\star_1)\big)$. Consequently, the {\bf minimizers} of $\Upsilon^N$ correspond either to $(\lambda_1,\cdots,\lambda_1)$ or $(\lambda_2,\cdots,\lambda_2)$.

\noindent
\textbf{Step 2}: Let us establish that, for any $\kappa$ such that $\kappa<||\lambda_1-\lambda_2||$,
\begin{equation}
\label{michonne}
\lim_{N\to\infty}\inf_{\mathbf{z}^N\in\partial\mathcal B^N(\mathbf{\lambda_i^N};\kappa)}\left(\Upsilon^N(\mathbf{z}^N)-\Upsilon^N(\mathbf{\lambda_i}^N)\right)=+\infty\,.
\end{equation}
 Take $\underline{\lambda}$ as either $\lambda_1$ or $\lambda_2$,  define $\mathbf{\underline{\lambda}^N}=(\underline{\lambda},\cdots,\underline{\lambda})$, and let $\mathbf{\lambda^{N,*}}$ be a minimizer of $\mathbf{z}^N\mapsto\Upsilon^N(\mathbf z^N)-\Upsilon^N(\mathbf{\underline{\lambda}^N})$ under the constraint $\mathbf z^N\in\partial\mathcal B^N(\mathbf{\underline{\lambda}^N};\kappa)$. Then, $\mathbf{\lambda^{N,*}}$ is characterized by the $\mathbb R^{dN}$ differential equation:
\[
\nabla_{\mathbf \lambda_N}\Upsilon^N(\mathbf{\lambda^{N,*}})+L_{N,\kappa}\nabla_{\mathbf \lambda_N}\Bigg(\Big{|}\Big{|}\frac{1}{N}\sum_{j=1}^N\mathbf \lambda^{N,*}_j-\underline{\lambda}\Big{|}\Big{|}^2-\kappa^2\Bigg)=0,
\]
where $L_{N,\kappa}\in\mathbb R$ is the Lagrangian multiplier related to the constraint. Component by component, the equation yields: for all $1\le i\le N$,
\[
\nabla V(\mathbf \lambda^{N,*}_i)+\alpha\Bigg(\mathbf \lambda^{N,*}_i-\frac{1}{N}\sum_{j=1}^N\mathbf \lambda^{N,*}_j\Bigg)+\frac{2L_{N,\kappa}}{N}\Bigg(\frac{1}{N}\sum_{j=1}^N\mathbf \lambda^{N,*}_j-\underline{\lambda}\Bigg)=0.
\]
As $z\mapsto \nabla^2 V(z)+\alpha{\rm Id}$ is positive definite, $z\mapsto \Big(\nabla V(z)+\alpha z\Big)$ is invertible.
Therefore the~$\mathbf \lambda^{N,*}_i$ are all equals to a common value $\lambda^*$ satisfying $\nabla V(\lambda^*)=-\frac{2 L_{N,\kappa}}{N}(\lambda^*-\underline{\lambda})$. Consequently, the barycenter $\frac{1}{N}\sum_{j=1}^N\lambda^{*,N}_j$ reduces to $\lambda^*$ and so
 \[
 \Upsilon^N(\mathbf{\lambda^{*,N}})-\Upsilon^N(\underline{\lambda}^N)= \sum_{i=1}^N \Big(V(\lambda^{*,N}_j)-V(\underline{\lambda})\Big)=N\Big(V(\lambda^*)-V(\underline{\lambda})\Big)\,.
 \]
 Since the constraint $||\frac{1}{N}\sum_{j=1}^N\lambda^{N,*}_j-\underline{\lambda}||^2=\kappa^2$ also reduces to $||\lambda^*-\underline{\lambda}||^2=\kappa^2$, this leads to the lower-bound
 \begin{align*}
 \inf_{\mathbf z^N\in\partial\mathbb{B}^N(\mathbf{\underline{\lambda}^N};\kappa)}\left(\Upsilon^N(\mathbf{z}^N)-\Upsilon^N(\underline{\lambda}^N)\right)
 \ge N \inf_{z\in\mathbb R^d\,:\,||z-\underline{\lambda} ||=\kappa}\Big(V(z)-V(\underline{\lambda})\Big)\,.
 \end{align*}
As long as $\kappa> 0$, \eqref{michonne} follows immediately.

\noindent
\textbf{Final step}:  We remark that $\tau^{N}_{\kappa}(\sigma)$ is a.s. smaller than the first time that one particle, say $X^{1,N}$, exits from $\mathbb{B}(\lambda_1;N\times\kappa)$. The latter being finite a.s., the same holds for $\tau^{N}_{\kappa}(\sigma)$. Define next the (descending) flows:
\[
\mathbf \Phi^{N,-}_t(\mathbf x^N)=\mathbf x^N-\int_0^t \nabla \Upsilon^N(\mathbf \Phi^{N,-}_s(\mathbf x^N))\,ds,\,t\ge 0,\,\mathbf x^N\in\mathbb R^{dN}\,,
\]
and the (truncated) basin of attraction: 
\[
\mathcal D^N=\{\mathbf x^N\in\mathbb R^{dN}\,:\,\Upsilon^N(\mathbf x^N)-\Upsilon^N(\lambda_1^N)\le \underline{H}_\varepsilon+2\,\text{and}\, \lim_{t\rightarrow+\infty}\mathbf \Phi^{N,-}_t(\mathbf x^N)=\mathbf{\lambda_1^N}\}\,.
\] 
By construction, the basin $\mathcal D^N$ is stable by $-\nabla\Upsilon^N$, contains $\mathbf{\lambda_1^N}$, and 
has a related exit-cost given by $\inf_{\mathbf z^N\in\partial \mathcal D^N}\Upsilon^N(\mathbf{z}^N)-\Upsilon^N(\mathbf{\lambda_1^N})$ which is simply $\underline{H}_\varepsilon+2$.
Choosing $N$ large enough,~$\mathcal D^N$ can be forced to lie within $\mathcal B^N(\mathbf{\lambda_1^N};\kappa)$. Indeed, assume that $\mathbf x^N$ is in $\mathcal D^N\setminus \mathcal B^N(\mathbf{\lambda_1^N};\kappa)$. Since $\mathbf{\lambda_1^N}$ is in $\mathcal B^N(\mathbf{\lambda_1^N};\kappa)$, there exists $t>0$ such that 
${\mathbf {y}^N}:=\mathbf \Phi^{N,-}_t(\mathbf x^N)$ lies in the boundary~$\partial\mathcal B^N(\mathbf{\lambda_1^N};\kappa)$. According to \textbf{Step 2}, there exists $N_{\epsilon,\kappa}$ such that for any $N\ge N_{\epsilon,\kappa}$, 
\[
\inf_{{\mathbf z^N}\in \partial\mathbb{B}^N(\mathbf{\underline{\lambda}^N};\kappa)}\Upsilon^N(\mathbf z^N)-\Upsilon^N(\lambda_1^N)>\underline{H}_\varepsilon+2\,.
\]
Subsequently, $\Upsilon^N(\mathbf y^N)-\Upsilon^N(\lambda_1^N)> \underline{H}_\varepsilon+2$ implying that $\mathbf y^N$ can not lie in  $\partial \mathcal B^N(\mathbf{\lambda_1^N};\kappa)$. By extension, $\mathbf x^N$ can not lie in $\mathcal D^N\setminus \mathcal B^N(\mathbf{\lambda_1^N};\kappa)$. 

Define now $\tilde \tau^N_{\kappa}=\inf\{t\ge 0\,:\,\mathbf X^N_t\in \mathcal D^N\}$. As $ \mathcal D^N\subset \mathcal B^N(\mathbf{\lambda_1^N};\kappa)$, $\tilde \tau^N_{\kappa}\le \tau_{\kappa}^{N}$. By~Theorem~\ref{thm:KramersDZ}, 
\[
\lim_{\sigma\rightarrow 0}\mathbb P\left\{\tau^{N}_\kappa(\sigma)\le \exp\left[\frac 2{\sigma^2}(\underline{H}_\varepsilon+2)\right]\right\}\le \lim_{\sigma\rightarrow 0}\mathbb P\left\{\tilde \tau^{N}_\kappa(\sigma)\le \exp\left[\frac 2{\sigma^2}(\underline{H}_\varepsilon+2)\right]\right\}=0\,.
\]
This ends the proof.
\end{proof}

Owing to  Lemma \ref{carl2_reborn} and adapting some proof arguments from the proof of Lemma \ref{lucille}, the proof of Proposition~\ref{lucille2} can be carried on.  
\begin{proof}[Proof of Proposition \ref{lucille2}] 
For $N$ fixed, and given $\kappa>0$, let $T_{\kappa,N}$ be the first time the gradient flows $\Psi_t(x)=x-\int_0^t\nabla V(\Psi_s(x))\,ds$ and $\Psi_t(y)=y-\int_0^t\nabla V(\Psi_s(y))\,ds$ are simultaneously at a $\kappa$-neighborhood of their respective attractors, $\lambda_1$ and $\lambda_2$. Owing to the large deviations principle \eqref{LDP} applied to $(\mathbf{X^N},\mathbf{Y^N})$, for all $1\le i\le N$,
\begin{align*}
\lim_{\sigma\rightarrow 0}\mathbb P\left\{\min\left(|| X^{i,N}_{T_{\kappa,N}}-\Psi_{T_{\kappa,N}}(x_1)||,|| Y^{i,N}_{T_{\kappa,N}}-\Psi_{T_{\kappa,N}}(x_2)||\right)\ge \kappa\right\}=0\,,
\end{align*}
which, owing to the exchangeability of the particle systems and applying Jensen's inequality, yields
\begin{align*}
 \lim_{\sigma\rightarrow 0}\mathbb P\left\{\min\left(|| \overline{X}^{i,N}_{T_{\kappa,N}}-\lambda_1||,|| \overline{Y}^{i,N}_{T_{\kappa,N}}-\lambda_2||\right)\ge \kappa\right\}=0\,.
 \end{align*}

With $\kappa$ still arbitrarily positive, choose next $N_{\varepsilon,\kappa}$ as in  Lemma \ref{carl2_reborn} and $\sigma$ small enough so that $T_{\varepsilon,\kappa}:=T_{\kappa,N_{\varepsilon,\kappa}}$ given as above satisfies $T_{\varepsilon,\kappa}<2^{-1}\exp\big[\frac 2{\sigma^2}(\underline{H}_\varepsilon+2)]$. Up to a time shift, Lemma \ref{carl2_reborn} yields, for any $N\ge N_{\varepsilon,\kappa}$,
\[
\lim_{\sigma\rightarrow 0}\mathbb P\left\{\max_{t\in \big[T_{\varepsilon,\kappa},\exp[\frac{2}{\sigma^2}(\underline{H}_\varepsilon+2)]\big]}\Big(||\overline{X}^{N}_t-\lambda_1||+|| \overline{Y}^{N}_t-\lambda_2||\Big)\ge 2\kappa\right\}=0\,.
\]
As such, we can consider the comparison between $X^{i,N}$ and $\left(x^{i,\sigma}_{T_{\varepsilon,\kappa},t}\right)_{t\geq T_{\varepsilon,\kappa}}$ under the event\\
$\left\{\max_{t\in[T_{\varepsilon,\kappa},\exp[\frac 2{\sigma^2}(\underline{H}_\varepsilon+2)]]}||\overline X^N_t-\lambda_1||<\kappa\right\}$. Observe next that, for all $t\ge T_{\varepsilon,\kappa}$, the path difference between $X^{i,N}$ and $\left(x^{i,\sigma}_{T_{\varepsilon,\kappa},t}\right)_{t\geq T_{\varepsilon,\kappa}}$is given by
\begin{align*}
X_t^{i,N}-x_{T_{\varepsilon,\kappa},t}^{\sigma,i}	&=-\int_{T_{\varepsilon,\kappa}}^t\left(\nabla V(X_s^{i,N})+\alpha X_s^{i,N}-\nabla V(x_{T_{\varepsilon,\kappa},s}^{\sigma,i})+\alpha x_{T_{\varepsilon,\kappa},s}^{\sigma,i}\right)\,ds\\
&+\alpha\int_{T_{\varepsilon,\kappa}}^t\left(\overline X^{N}_s-\lambda_1\right)\,ds\,.
\end{align*}
Consequently, by $(\mathbf{A})-(iii)$,
\begin{align*}
&\frac{d}{dt}||X_t^{i,N}-x_{T_{\varepsilon,\kappa},t}^{i,\sigma}||^2\\
&=-2\big(X_t^{i,N}- x_{T_{\varepsilon,\kappa},t}^{i,\sigma}\big)\left(\nabla V(X_t^{i,N})+\alpha X_t^{i,N}-\nabla V(x_{T_{\varepsilon,\kappa},t}^{i,\sigma})+\alpha x_{T_{\varepsilon,\kappa},t}^{i,\sigma}\right)\\
&+2\alpha\big(X_t^{i,N}- x_{T_{\varepsilon,\kappa},t}^{i,\sigma}\big)(\overline X^{N}_t-\lambda_1)\\
& \le -2(\alpha+\theta)||X_t^{i,N}- x_{T_{\varepsilon,\kappa},t}^{i,\sigma}||^2
+2\alpha\big(X_t^{i,N}- x_{T_{\varepsilon,\kappa},t}^{i,\sigma}\big)(\overline X^{N}_t-\lambda_1)\,.
\end{align*}
On the event $\left\{\max_{t\in[T_{\varepsilon,\kappa},\exp[\frac 2{\sigma^2}(\underline{H}_\varepsilon+2)]]}||\overline X^N_t-\lambda_1||<\kappa\right\}$, the above yields
\[
\frac{d}{dt}||X_t^{i,N}-x_{T_{\varepsilon,\kappa},t}^{i,\sigma}||^2 \le 2||X_t^{i,N}- x_{T_{\varepsilon,\kappa},t}^{i,\sigma}||\left(\alpha\kappa-(\alpha+\theta)||X_t^{i,N}- x_{T_{\varepsilon,\kappa},t}^{i,\sigma}||\right)\,.
\]
Applying Lemma \ref{randal}, it follows that $||X_t^{i,N}-x_{T_{\varepsilon,\kappa},t}^{i,\sigma}||\leq\frac{\alpha\kappa}{\alpha+\theta}$ and, taking $\kappa<\frac{\alpha+\theta}{\alpha}\xi$ yields that $||X_t^{i,N}-x_{T_{\varepsilon,\kappa},t}^{i,\sigma}||\leq\xi$. Indeed, at time $T_{\varepsilon,\kappa}$, the two processes are equal.

\noindent 
Applying the same reasoning to $||Y_t^{i,N}-y_{T_{\varepsilon,\kappa},t}^{i,\sigma} ||$, the claim follows.
\end{proof}

\noindent
According to Lemma \ref{dale-bis}, and choosing $\varepsilon<\varepsilon_c$ with 
$\varepsilon_c$ as in \eqref{ColliRad2}, and given $(X_{T_{\varepsilon,\kappa}}^{i,N},Y_{T_{\varepsilon,\kappa}}^{i,N})$, the hitting-times
\[
\widehat{\beta}^i_{\lambda,\varepsilon,N}(\sigma)=\inf\left\{t\ge T_{\varepsilon,\kappa}\,:\,(x^{i,\sigma}_{T_{\varepsilon,\kappa},t},y^{i,\sigma}_{T_{\varepsilon,\kappa},t})\in\mathbb{B}(\lambda;\varepsilon)\times \mathbb{B}(\lambda;\varepsilon)\right\},\,1\le i\le N,
\]
all satisfy the Kramers' type law with the exit-cost $H_\varepsilon(\lambda)$ as in \eqref{merlemerlemerle} and the exit-property:
\[
\lim_{\sigma\rightarrow 0}\mathbb P^{i,N}_{T_{\varepsilon,\kappa};(x,y)}\left\{{\rm dist}\left(\Big(x^{i,\sigma}_{T_{\varepsilon,\kappa},\widehat{\beta}^i_{\lambda,\varepsilon,N}(\sigma)},y^{i,\sigma}_{T_{\varepsilon,\kappa},\widehat{\beta}^i_{\lambda,\varepsilon,N}(\sigma)}\Big),\mathbb B(\lambda;\varepsilon)\times\mathbb B(\lambda;\varepsilon) \right)\leq\delta\right\}=1\,.
\]
for $\mathbb P^{i,N}_{T_{\varepsilon,\kappa};(x,y)}$ the conditional probability given $\{(X_{T_{\varepsilon,\kappa}}^{i,N},Y_{T_{\varepsilon,\kappa}}^{i,N})=(x,y)\}$. 

Following the same proof arguments as for Proposition \ref{torche}, we obtain

\begin{prop}
\label{tyrese}
For any $\varepsilon\in(0,\varepsilon_c)$ and $\lambda\in\mathbb R^d$, provided $N$ is large enough, we have: for any $\delta>0$ and any $1\le i\le N$,

\begin{equation*}
\lim_{\sigma\to0}\PP\left\{\exp\left[\frac{2}{\sigma^2}\left(H_{\varepsilon}(\lambda)-\delta\right)\right]<\widehat{\beta}^{i}_{\lambda,\varepsilon,N}(\sigma)<\exp\left[\frac{2}{\sigma^2}\left(H_{\varepsilon}(\lambda)+\delta\right)\right]\right\}=1\,,
\end{equation*}
and
\[
\lim_{\sigma\rightarrow 0}\mathbb P\left\{{\rm dist}\left(\Big(X^{i,N}_{\widehat{\beta}^{i}_{\lambda,\varepsilon,N}(\sigma)},Y^{i,N}_{\widehat{\beta}^{i}_{\lambda,\varepsilon,N}(\sigma)}\Big),\mathbb B(\lambda;\varepsilon)\times\mathbb B(\lambda;\varepsilon)\right)<\delta \right\}=1\,.
\]\end{prop}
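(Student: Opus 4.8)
The plan is to mirror, essentially verbatim, the proof of Proposition~\ref{torche}, substituting the individual particle pair $(X^{i,N},Y^{i,N})$ for the self-stabilizing pair $(X,Y)$ and the coupling provided by Proposition~\ref{lucille2} for the one provided by Lemma~\ref{lucille}. First I would fix $\delta>0$, pick $\varepsilon\in(0,\varepsilon_c)$, and take $N\ge N_{\varepsilon,\xi}$, with $\xi>0$ and $\kappa>0$ to be chosen small later. Since each descending flow $\Psi_t(x_k)$ reaches $\lambda_k$ and the particles are $\varepsilon$-collision-free at $\sigma=0$ (this uses $(\mathbf A)-(iv)$ and that $x_1,x_2$ as well as $\lambda_1,\lambda_2$ are separated by more than $2\varepsilon_0$), one has $\lim_{\sigma\to0}\widehat{\beta}^i_{\lambda,\varepsilon,N}(\sigma)=\infty$ a.s.; hence the event $\{T_{\varepsilon,\kappa}>\widehat{\beta}^i_{\lambda,\varepsilon,N}(\sigma)\}$ is negligible as $\sigma\downarrow0$. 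On the complementary event, Proposition~\ref{lucille2} guarantees that, with probability tending to $1$, the paths of $(X^{i,N}_t,Y^{i,N}_t)$ and $(x^{i,\sigma}_{T_{\varepsilon,\kappa},t},y^{i,\sigma}_{T_{\varepsilon,\kappa},t})$ stay within distance $\xi$ of one another up to time $\exp[\frac{2}{\sigma^2}(\underline H_\varepsilon+2)]$.

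Next I would sandwich $\widehat{\beta}^i_{\lambda,\varepsilon,N}(\sigma)$ between the two hitting times $\widehat{\tau}^{i,\sigma}_{\lambda,\varepsilon\pm\xi}$ of the linearized pair into $\mathbb B(\lambda;\varepsilon\mp\xi)^{\times 2}$: on the coupling event, if the true pair enters $\mathbb B(\lambda;\varepsilon)^{\times2}$ then the linear pair has already entered $\mathbb B(\lambda;\varepsilon+\xi)^{\times2}$, and conversely if the linear pair enters $\mathbb B(\lambda;\varepsilon-\xi)^{\times2}$ the true pair has entered $\mathbb B(\lambda;\varepsilon)^{\times2}$. One subtlety here, compared with Proposition~\ref{torche}, is that the coupling in Proposition~\ref{lucille2} only holds up to $\exp[\frac{2}{\sigma^2}(\underline H_\varepsilon+2)]$; but this is harmless because, by Lemma~\ref{dale-bis} applied to $\Psi_1,\Psi_2$ (up to the time shift $T_{\varepsilon,\kappa}$, using the Markov property), the linear hitting time $\widehat\tau^{i,\sigma}_{\lambda,\varepsilon-\xi}$ is, with probability tending to $1$, smaller than $\exp[\frac{2}{\sigma^2}(H_{\varepsilon-\xi}(\lambda)+\delta')]$, and since $H_{\varepsilon-\xi}(\lambda)\le\underline H_\varepsilon+1<\underline H_\varepsilon+2$ for $\xi$ small, the relevant entrance happens well before the coupling horizon expires; similarly the lower-tail estimate involves times $\exp[\frac{2}{\sigma^2}(H_{\varepsilon+\xi}(\lambda)-\delta'')]$ which are also below the horizon. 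Using the continuity of $\eta\mapsto\widehat H_\eta(\lambda)$ (equivalently the continuity in the radius of the exit-cost \eqref{gouverneur3}), I choose $\xi$ small enough that $H_\varepsilon(\lambda)+\delta\ge \widehat H_{\varepsilon-\xi}(\lambda)+\delta'$ and $\widehat H_\varepsilon(\lambda)-\delta\le \widehat H_{\varepsilon+\xi}(\lambda)-\delta''$ for some $\delta',\delta''>0$, and the Kramers' type law for $\widehat\tau^{i,\sigma}_{\lambda,\varepsilon\pm\xi}$ (Lemma~\ref{dale-bis}, transported through the Markov shift) then yields both tails of \eqref{merlemerlemerle}.

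The distance/persistence estimate for $(X^{i,N}_{\widehat\beta^i_{\lambda,\varepsilon,N}(\sigma)},Y^{i,N}_{\widehat\beta^i_{\lambda,\varepsilon,N}(\sigma)})$ is then immediate from the definition of $\widehat\beta^i_{\lambda,\varepsilon,N}(\sigma)$: at that stopping time the pair lies in $\overline{\mathbb B(\lambda;\varepsilon)}^{\times2}$ by continuity of paths, so the distance to $\mathbb B(\lambda;\varepsilon)\times\mathbb B(\lambda;\varepsilon)$ is $0\le\delta$. I expect the main obstacle to be bookkeeping around the finite coupling horizon: one must verify carefully that every event invoked in the Kramers' argument for the linearized pair concerns times strictly below $\exp[\frac{2}{\sigma^2}(\underline H_\varepsilon+2)]$, so that the restricted coupling of Proposition~\ref{lucille2} genuinely applies; this requires the a priori knowledge that $H_\varepsilon(\lambda)$ and the relevant neighbouring exit-costs are bounded by $\underline H_\varepsilon+1$ for $\lambda$ and $\xi$ in the regime under consideration, which is exactly why the buffer ``$+2$'' was built into Proposition~\ref{lucille2} and Lemma~\ref{carl2_reborn}. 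Everything else is a routine transcription of the self-stabilizing case.
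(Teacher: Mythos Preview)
Your approach matches the paper's, which simply says to repeat the proof of Proposition~\ref{torche} with Proposition~\ref{lucille2} replacing Lemma~\ref{lucille}. However, there is a genuine gap in your treatment of the finite coupling horizon. You assert that $H_{\varepsilon-\xi}(\lambda)\le\underline H_\varepsilon+1$ ``for $\lambda$ and $\xi$ in the regime under consideration'', but the statement of Proposition~\ref{tyrese} is for \emph{arbitrary} $\lambda\in\mathbb R^d$, and for $\lambda$ far from both wells $H_\varepsilon(\lambda)$ can be made as large as one likes---in particular larger than $\underline H_\varepsilon+2$. For such $\lambda$ the linearized hitting time $\widehat\tau^{i,\sigma}_{\lambda,\varepsilon-\xi}$ will, with high probability, exceed $\exp[\tfrac{2}{\sigma^2}(\underline H_\varepsilon+2)]$, and the coupling of Proposition~\ref{lucille2} as stated no longer controls the sandwich $\widehat\tau^{i,\sigma}_{\lambda,\varepsilon+\xi}\le\widehat\beta^i_{\lambda,\varepsilon,N}(\sigma)\le\widehat\tau^{i,\sigma}_{\lambda,\varepsilon-\xi}$.

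The repair is to note that the constant ``$\underline H_\varepsilon+2$'' in Lemma~\ref{carl2_reborn} and Proposition~\ref{lucille2} is arbitrary: the proof of Lemma~\ref{carl2_reborn} shows that the exit-cost of $\mathbf X^N$ from $\mathcal B^N(\lambda_i^N;\kappa)$ diverges with $N$, so for any prescribed level $L>0$ one may choose $N$ large enough that the coupling holds up to $\exp[\tfrac{2}{\sigma^2}L]$. Since the clause ``provided $N$ is large enough'' in Proposition~\ref{tyrese} permits $N$ to depend on $\lambda$ and $\varepsilon$, take $L=\widehat H_\varepsilon(\lambda)+2$ and then run your sandwiching argument unchanged. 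With this adjustment the rest of your outline is correct.
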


From this, we can next derive the analog of Proposition \ref{torche2}  which enables to conclude Theorem \ref{thm:main2}.

\begin{prop}
\label{ned} Let $H_\varepsilon$, $\underline H_\varepsilon$, $\mathcal M_\varepsilon$ and $\varepsilon_c$ be as in Proposition \ref{torche2}. For any $\varepsilon\in(0,\varepsilon_c)$ and assuming that $N$ is large enough, it holds: for any $\delta>0$,
\begin{equation*}
\lim_{\sigma\to0}\PP\left\{\exp\left[\frac{2}{\sigma^2}\left(\underline{H}_{\varepsilon}-\delta\right)\right]<\mathcal{C}^{i}_{\varepsilon,N}(\sigma)<
\exp\left[\frac{2}{\sigma^2}\left(\underline{H}_{\varepsilon}+\delta\right)\right]\right\}=1\,.
\end{equation*}

Moreover, the collision-location persists near $\mathcal M_\varepsilon$ with: for any $\delta >0$, $1\le i\le N$,
\begin{equation*}
\lim_{\sigma\to0}\PP\left\{\inf_{\lambda_\varepsilon\in\mathcal M_\varepsilon}\max\bigg({\rm dist}\big(X^{i,N}_{C^{i}_{\varepsilon,N}(\sigma)},\mathbb B(\lambda_\varepsilon;\varepsilon)\big),{\rm dist}\big(Y^{i,N}_{C^{i}_{\varepsilon,N}(\sigma)},\mathbb B(\lambda_\varepsilon;\varepsilon)\big)\bigg)\ge \delta\right\}=0\,.
\end{equation*}
\end{prop}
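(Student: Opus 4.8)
The plan is to transcribe the proof of Proposition~\ref{lacollision}, with the barycenter $Z^{i,N}_t:=2^{-1}(X^{i,N}_t+Y^{i,N}_t)$ playing the role of $z^\sigma$ and with $\beta^i_{\lambda,\varepsilon,N}(\sigma)$, the hitting time of the genuine particle pair $(X^{i,N},Y^{i,N})$, playing the role of $\tau_{\lambda,\epsilon}(\sigma)$. The only structural novelty is that an individual particle $X^{i,N}$ is \emph{not} a stochastic gradient flow, so Theorem~\ref{thm:KramersDZ} cannot be applied to it directly: every estimate involving $X^{i,N}$ alone must be routed through the coupling of Proposition~\ref{lucille2} with the linearised flow $x^{i,\sigma}_{T_{\varepsilon,\kappa},\cdot}$ (equivalently, through the global gradient flow $\mathbf X^N$ driven by $\Upsilon_N$ and Lemma~\ref{carl2_reborn}), which is valid only on the window $[T_{\varepsilon,\kappa},\exp[2(\underline H_\varepsilon+2)/\sigma^2]]$. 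As preliminaries one records that, since $\varepsilon<\varepsilon_c$, $\inf_\lambda\widehat H_\varepsilon(\lambda)=\inf_\lambda H_\varepsilon(\lambda)=\underline H_\varepsilon$ and $\mathcal M_\varepsilon$ is a compact subset of $\mathbb R^d\setminus(\mathbb B(\lambda_1;\varepsilon)\cup\mathbb B(\lambda_2;\varepsilon))$, and that, on the good coupling event, the sandwiching $\widehat\beta^i_{\lambda,\varepsilon+\xi,N}(\sigma)\le\beta^i_{\lambda,\varepsilon,N}(\sigma)\le\widehat\beta^i_{\lambda,\varepsilon-\xi,N}(\sigma)$ combined with Proposition~\ref{tyrese} and the continuity of $\eta\mapsto\widehat H_\eta(\lambda)$ shows that $\beta^i_{\lambda,\varepsilon,N}(\sigma)$ obeys the Kramers' type law with exit-cost $\widehat H_\varepsilon(\lambda)$ and persists on $\mathbb B(\lambda;\varepsilon)\times\mathbb B(\lambda;\varepsilon)$, as long as the probed time scales remain below the coupling horizon.

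For the upper tail, write $C^i_{\varepsilon,N}(\sigma)=\inf_\lambda\beta^i_{\lambda,\varepsilon,N}(\sigma)\le\beta^i_{\lambda_\varepsilon,\varepsilon,N}(\sigma)$ for some $\lambda_\varepsilon\in\mathcal M_\varepsilon$ and invoke the upper tail of $\beta^i_{\lambda_\varepsilon,\varepsilon,N}(\sigma)$ (here $\widehat H_\varepsilon(\lambda_\varepsilon)=\underline H_\varepsilon<\underline H_\varepsilon+2$, so the coupling is in force up to the relevant time). As in Proposition~\ref{lacollision}, since $||X^{i,N}_{C^i_{\varepsilon,N}(\sigma)}-Y^{i,N}_{C^i_{\varepsilon,N}(\sigma)}||=2\varepsilon$, the persistence assertion is equivalent to $\lim_{\sigma\to0}\mathbb P\{Z^{i,N}_{C^i_{\varepsilon,N}(\sigma)}\in\overline{\mathcal M_{\varepsilon,\delta}}\}=1$; granted this, the lower tail reduces to $\mathbb P\{C^i_{\varepsilon,N}(\sigma)\le\exp[2(\underline H_\varepsilon-\delta)/\sigma^2],\,Z^{i,N}_{C^i_{\varepsilon,N}(\sigma)}\in\overline{\mathcal M_{\varepsilon,\delta}}\}\to0$. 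Covering the compact set $\overline{\mathcal M_{\varepsilon,\delta}}$ by finitely many balls $\mathbb B(\lambda^l;r)$ with $r$ so small that $\widehat H_{\varepsilon+r}(\lambda^l)-\delta'\ge\underline H_\varepsilon-\delta$, one observes that $\{Z^{i,N}_{C^i_{\varepsilon,N}(\sigma)}\in\overline{\mathbb B(\lambda^l;r)}\}$ forces $C^i_{\varepsilon,N}(\sigma)\ge\beta^i_{\lambda^l,\varepsilon+r,N}(\sigma)$, and the lower tail of $\beta^i_{\lambda^l,\varepsilon+r,N}(\sigma)$ (from Proposition~\ref{tyrese} via the coupling, which is active below the horizon $\exp[2(\underline H_\varepsilon+2)/\sigma^2]$) kills each term after a union bound.

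It remains to prove $\lim_{\sigma\to0}\mathbb P\{Z^{i,N}_{C^i_{\varepsilon,N}(\sigma)}\in\overline{\mathcal M_{\varepsilon,\delta}}\}=1$, which I would do as in the proof of the persistence identity~\eqref{ben}: fix $\xi>0$, set $S_{\varepsilon,\xi}=\{\lambda:\Psi_1(\lambda)-\Psi_1(\lambda_1)\ge\underline H_\varepsilon+3\xi\}$ and a minimal radius $M_{\varepsilon,\xi}>\varepsilon$ with $\mathbb R^d\setminus\mathbb B(\lambda_1;M_{\varepsilon,\xi}-\varepsilon)\subset S_{\varepsilon,\xi}$, and split $\mathbb P\{Z^{i,N}_{C^i_{\varepsilon,N}(\sigma)}\notin\overline{\mathcal M_{\varepsilon,\delta}}\}\le I_1(\sigma)+I_2(\sigma)$ with $I_1(\sigma)=\mathbb P\{Z^{i,N}_{C^i_{\varepsilon,N}(\sigma)}\notin\overline{\mathbb B(\lambda_1;M_{\varepsilon,\xi})}\}$ and $I_2(\sigma)=\mathbb P\{Z^{i,N}_{C^i_{\varepsilon,N}(\sigma)}\in\overline{\mathbb B(\lambda_1;M_{\varepsilon,\xi})}\setminus\overline{\mathcal M_{\varepsilon,\delta}}\}$. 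For $I_2$ one runs the same finite-covering argument as for the lower tail, now comparing $C^i_{\varepsilon,N}(\sigma)$ with $\beta^i_{\widehat\lambda^l,\varepsilon+\widehat r,N}(\sigma)$ and using that $\widehat H_\varepsilon\ge\underline H_\varepsilon+\gamma$ for some $\gamma>0$ on the compact annulus. For $I_1$, since $||X^{i,N}_{C^i_{\varepsilon,N}(\sigma)}-Y^{i,N}_{C^i_{\varepsilon,N}(\sigma)}||=2\varepsilon$, the event forces $X^{i,N}$ to exit $\mathbb B(\lambda_1;M_{\varepsilon,\xi}-\varepsilon)$ before $C^i_{\varepsilon,N}(\sigma)$, hence — on the good coupling event of Proposition~\ref{lucille2}, active up to $\exp[2(\underline H_\varepsilon+2)/\sigma^2]$ and therefore, after intersecting with the already-established upper tail of $C^i_{\varepsilon,N}(\sigma)$, up to $C^i_{\varepsilon,N}(\sigma)$ itself — forces $x^{i,\sigma}_{T_{\varepsilon,\kappa},\cdot}$ to leave $\mathbb B(\lambda_1;M_{\varepsilon,\xi}-\varepsilon-\xi)$ before $C^i_{\varepsilon,N}(\sigma)$; that ball being stable by $-\nabla\Psi_1$, Theorem~\ref{thm:KramersDZ} furnishes exit-cost at least $\underline H_\varepsilon+3\xi-o_\xi(1)$, and combining once more with the upper tail of $C^i_{\varepsilon,N}(\sigma)$ gives $I_1(\sigma)\to0$. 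Letting $\xi\downarrow0$ completes the argument, and $\lim_{\varepsilon\to0}\underline H_\varepsilon=\underline H_0$ with $\lim_{\varepsilon\to0}\mathcal M_\varepsilon=\{\lambda_0\}$ then yield Theorem~\ref{thm:main2}.

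The main obstacle is precisely this interplay with the finite coupling window: since no individual particle is a gradient flow, every ``escape from a neighbourhood of $\lambda_1$ (or $\lambda_2$)'' estimate has to be transported through Proposition~\ref{lucille2}, which holds only on $[T_{\varepsilon,\kappa},\exp[2(\underline H_\varepsilon+2)/\sigma^2]]$; the bookkeeping must be arranged so that every use of the coupling is intersected with the upper tail $\{C^i_{\varepsilon,N}(\sigma)\le\exp[2(\underline H_\varepsilon+c)/\sigma^2]\}$ for some $c<2$ proved at the outset, which guarantees that the collision has already occurred while the coupling is still valid — the ``$+2$'' buffer in the horizon being exactly what makes this legitimate. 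Apart from this consistency requirement, the proof is a routine transcription of that of Proposition~\ref{lacollision}.
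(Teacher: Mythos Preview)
Your proposal is correct and follows exactly the route the paper intends: the paper does not give a standalone proof of Proposition~\ref{ned} but simply declares it the analog of Proposition~\ref{torche2} (itself a transcription of Proposition~\ref{lacollision} with the barycenter in place of $z^\sigma$), and your write-up carries this out faithfully. Your explicit bookkeeping of the finite coupling window from Proposition~\ref{lucille2} --- routing the $I_1$-type escape estimate for $X^{i,N}$ through the linearised flow $x^{i,\sigma}$ and intersecting every use of the coupling with the already-established upper tail --- is in fact more careful than what the paper spells out.
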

 \section{One-dimensional case}\label{sec:1DCase}

As we briefly mentioned in the introduction of the paper, the one-dimensional case provides a framework which allows to consider the {\it exact} first collision-time between the self-stabilizing systems \eqref{MV1}-\eqref{MV2}:
\[
C(\sigma):=\inf\left\{t\ge 0\,:\,X_t=Y_t\right\}\,,
\] 
and the  exact first collision-time between the particle systems \eqref{Ibis} and \eqref{IIbis}:
\[
C^i_N(\sigma)=\inf\left\{t\ge 0\,:\,X^{i,N}_t=Y^{i,N}_t\right\}\,.
\]
The reduction to $d=1$ first ensures that these collision-times are finite, almost surely. Additionally, while the methodology to derive Kramers' type laws still require  a re-interpretation of the collision-times and a coupling argument relating $(X,Y)$ and $(X^{i,N},Y^{i,N})$~-~with the diffusion $(x^\sigma,y^\sigma)$ defined in \eqref{x}~-~\eqref{y}, the one-dimensional setting enables to significantly simplify the proof arguments exhibited in Sections~\ref{sec:LinearCase}, \ref{sec:SelfStabilizingCase} and \ref{sec:ParticleCase}. Coupling lemmas are notably less significant and the design of suitable enlargements of the collision set, as in Section~\ref{subsec:Linear-collisionA} are not necessary. Such simplifications are allowed by the \emph{a priori} location of the first collision-location. Indeed, assuming the wells are ordered such that $\lambda_1<\lambda_2$, necessarily, by~$(\mathbf A)-(iv)$, $x_1<x_2$, and $C(\sigma)$ simply corresponds to the first time~$t$ where~$X_t\ge Y_t$. As the reciprocal case, $\lambda_1>\lambda_2$, a similar  interpretation holds (the ordering between $X$ and~$Y$ being simply reversed), in addition to the assumptions $(\mathbf A)$ and without loss of generality,~$\lambda_1<\lambda_2$ is set in force from now on. Following this observation, we can formulate the interpretation~$C(\sigma)=\inf_{z\in\mathbb R}C_{z,z}(\sigma)$ for  
\begin{equation}\label{buffer_collision1d}
C_{z_1,z_2}(\sigma):=\inf\left\{t\ge 0\,\,:\,\,X_t\ge z_1,\,Y_t\le z_2\right\}\,,z_1,z_2\in\mathbb R.
\end{equation}
By analogy with Section \ref{sec:LinearCase}, we define the {\color{black}domains} $\mathcal{D}_{z_1}^1:=[z_1;+\infty)$ and $\mathcal{D}_{z_2}^2:=(-\infty;z_2]$, so that 
\[
C_{z_1,z_2}(\sigma)=\inf\left\{t\geq0\,\,:\,\,(X_t,Y_t)\notin\big(\mathbb R\times\mathbb R\big)\setminus\big(\mathcal{D}_{z_1}^1\times\mathcal{D}_{z_2}^2\big)\right\}\,.
\]
As it will become obvious in our proof arguments, only the situation  $z_1>\lambda_1$ and $z_2<\lambda_2$ is relevant, the situations where either $z_1\leq\lambda_1$ or $z_2\geq\lambda_2$, having no any particular interest.  

\noindent
Under this simplification, the domain $\mathbb R\setminus\mathcal{D}_{z_i}^i$ is stable by $x\mapsto -V'(x)-F'(x-\lambda_i)$ and Theorem~\ref{thm:KramersDZ} applies to 
\[
c_{z_1,z_2}(\sigma):=\inf\left\{t\geq0\,\,:\,\,(x_t^\sigma,y_t^\sigma)\notin\big(\mathbb R\times\mathbb R\big)\setminus\big(\mathcal{D}_{z_1}^1\times\mathcal{D}_{z_2}^2\big)\right\}\,,
\]
for
\begin{equation*}
x_t^\sigma=x_1+\sigma B_t-\int_0^t\nabla \Psi_1(x_s^\sigma)\,ds,\,t\geq0\,,
\end{equation*}
and
\begin{equation*}
y_t^\sigma=x_2+\sigma \tilde B_t-\int_0^t\nabla \Psi_2(y_s^\sigma)\,ds,\,t\geq0\,,
\end{equation*}
where $\Psi_1(x):=V(x)+F(x-\lambda_1)$ and $\Psi_2(y):=V(y)+F(y-\lambda_2)$. 
This yields to a Kramers' type law for $c_{z_1,z_2}(\sigma)$ with the exit-cost:
\[
\tilde H_0(z_1,z_2):=\left(\Psi_1(z_1)-\Psi_1(\lambda_1)\right)+\left(\Psi_2(z_2)-\Psi_2(\lambda_2)\right)\,.
\]
Adapting the proof arguments of Proposition \ref{torche}, the asymptotic of $c_{z_1,z_2}(\sigma)$ transfers to~$C_{z_1,z_2}(\sigma)$, yielding to:
\begin{lem}
\label{prison} For any $z_1\geq\lambda_1$ and $z_2\leq\lambda_2$ and for any $\delta>0$, it holds: 
\begin{equation}\label{alberto2}
\lim_{\sigma\rightarrow 0}
\mathbb P\left\{
\exp\Big[
\frac{2}{\sigma^2}(\tilde H_0(z_1,z_2)-\delta)\Big]
<C_{z_1,z_2}(\sigma)<
\exp\Big[ 
\frac{2}{\sigma^2}(\tilde H_0(z_1,z_2)+\delta) \Big]
\right\}=1\,,
\end{equation}
and
\[
\lim_{\sigma\rightarrow 0}\mathbb P\left\{\max\bigg(|X_{c_{z_1,z_2}(\sigma)}-z_1|,|Y_{c_{z_1,z_2}(\sigma)}-z_2|\bigg)\le \delta\right\}=1\,.
\]
\end{lem}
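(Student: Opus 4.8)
The plan is to transfer the Kramers' type law for $c_{z_1,z_2}(\sigma)$ (which follows from Theorem~\ref{thm:KramersDZ} applied to the product domain $(\mathbb R\times\mathbb R)\setminus(\mathcal D^1_{z_1}\times\mathcal D^2_{z_2})$, stable by $(-\nabla\Psi_1,-\nabla\Psi_2)$) over to $C_{z_1,z_2}(\sigma)$, by using the coupling Lemma~\ref{lucille}, exactly as in the proof of Proposition~\ref{torche}. First, I would record that since $\Psi_i$ is uniformly convex with minimizer $\lambda_i$ and $\mathbb R\setminus\mathcal D^i_{z_i}$ is stable by $-\nabla\Psi_i$ (as $z_1\ge\lambda_1$ and $z_2\le\lambda_2$ and $d=1$), Theorem~\ref{thm:KramersDZ} gives, for $c_{z_1,z_2}(\sigma)$, the exit-cost
\[
\inf_{(x,y)\in\partial(\mathcal D^1_{z_1}\times\mathcal D^2_{z_2})}\big(\Psi_1(x)-\Psi_1(\lambda_1)+\Psi_2(y)-\Psi_2(\lambda_2)\big)=\tilde H_0(z_1,z_2),
\]
the last equality because $\Psi_i$ is increasing away from $\lambda_i$ so its infimum over $\mathcal D^i_{z_i}$ is attained at the endpoint $z_i$; this simultaneously yields the persistence estimate $\max(|X_{c_{z_1,z_2}(\sigma)}-z_1|,|Y_{c_{z_1,z_2}(\sigma)}-z_2|)\le\delta$ from part~$(b)$ of Theorem~\ref{thm:KramersDZ}, since the unique minimizer of the boundary cost is $(z_1,z_2)$.

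Next, to pass from $(x^\sigma,y^\sigma)$ to $(X,Y)$, I would fix $\delta>0$, pick $\xi<\min(z_1-\lambda_1,\lambda_2-z_2)$ and $\xi$ small, and invoke Lemma~\ref{lucille}: there is a deterministic time $T_\xi$ such that $\sup_{t\ge T_\xi}\|X_t-x^\sigma_{T_\xi,t}\|\le\xi$ and $\sup_{t\ge T_\xi}\|Y_t-y^\sigma_{T_\xi,t}\|\le\xi$ almost surely, where $x^\sigma_{T_\xi,\cdot}$, $y^\sigma_{T_\xi,\cdot}$ are the linearized flows \eqref{x}--\eqref{y}. By the Markov property of $(x^\sigma_{T_\xi,\cdot},y^\sigma_{T_\xi,\cdot})$ and a time shift, these linearized flows obey the same Kramers' law as $c_{z_1,z_2}(\sigma)$ (with the same cost $\tilde H_0(z_1,z_2)$). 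On the event $\{T_\xi\le C_{z_1,z_2}(\sigma)\}$ — whose complement is negligible as $\sigma\downarrow0$ because $C_{z_1,z_2}(\sigma)\to\infty$ a.s. by the LDP \eqref{LDP} — the $\xi$-closeness forces the hitting time of $\{X\ge z_1, Y\le z_2\}$ to be sandwiched between the hitting times of the perturbed half-lines $\{x^\sigma\ge z_1\mp\xi, y^\sigma\le z_2\pm\xi\}$, i.e. between $c_{z_1+\xi,z_2-\xi}(\sigma)$ and $c_{z_1-\xi,z_2+\xi}(\sigma)$ (up to the shift $T_\xi$). Using continuity of $(z_1,z_2)\mapsto\tilde H_0(z_1,z_2)$, one chooses $\xi$ small enough that $\tilde H_0(z_1\pm\xi,z_2\mp\xi)$ differs from $\tilde H_0(z_1,z_2)$ by less than $\delta/2$, and then the upper and lower tails \eqref{alberto2} follow from the corresponding tails for $c_{z_1\pm\xi,z_2\mp\xi}(\sigma)$. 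The persistence claim for $(X_{c_{z_1,z_2}(\sigma)},Y_{c_{z_1,z_2}(\sigma)})$ is immediate from the definition of $c_{z_1,z_2}(\sigma)$, exactly as in Proposition~\ref{torche} (note the statement asks for the location at $c_{z_1,z_2}(\sigma)$ itself).

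The only genuinely delicate point is justifying that on $\{T_\xi\le C_{z_1,z_2}(\sigma)\}$ the half-line sandwich really holds: one must check that $X$ meeting $\{x\ge z_1\}$ and $Y$ meeting $\{y\le z_2\}$ \emph{simultaneously} is controlled by the corresponding simultaneous events for the linear flows — this is clear since $X_t\ge z_1 \Rightarrow x^\sigma_{T_\xi,t}\ge z_1-\xi$ and $Y_t\le z_2\Rightarrow y^\sigma_{T_\xi,t}\le z_2+\xi$, and conversely $x^\sigma_{T_\xi,t}\ge z_1+\xi\Rightarrow X_t\ge z_1$, $y^\sigma_{T_\xi,t}\le z_2-\xi\Rightarrow Y_t\le z_2$, so no extra argument beyond the uniform pathwise bound of Lemma~\ref{lucille} is needed. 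Everything else is a verbatim adaptation of the proof of Proposition~\ref{torche}, so I would keep the write-up short and refer to it.
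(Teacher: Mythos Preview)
Your proposal is correct and follows exactly the approach the paper indicates: the paper's entire proof of Lemma~\ref{prison} is the sentence ``Adapting the proof arguments of Proposition~\ref{torche}, the asymptotic of $c_{z_1,z_2}(\sigma)$ transfers to $C_{z_1,z_2}(\sigma)$'', and you have spelled out precisely that adaptation---the Kramers' law for $c_{z_1,z_2}(\sigma)$ via Theorem~\ref{thm:KramersDZ} on the stable half-lines, then the coupling Lemma~\ref{lucille} and the sandwich $c_{z_1+\xi,z_2-\xi}(\sigma)\le C_{z_1,z_2}(\sigma)\le c_{z_1-\xi,z_2+\xi}(\sigma)$ combined with the continuity of $\tilde H_0$. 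One minor remark: your claim that the persistence of $(X_{c_{z_1,z_2}(\sigma)},Y_{c_{z_1,z_2}(\sigma)})$ is ``immediate from the definition of $c_{z_1,z_2}(\sigma)$'' is slightly loose, since $c_{z_1,z_2}(\sigma)$ is defined through $(x^\sigma,y^\sigma)$ rather than $(X,Y)$; but the coupling bound of Lemma~\ref{lucille} closes this gap at once (and in any case this second display is almost certainly a typo for $C_{z_1,z_2}(\sigma)$, for which the claim \emph{is} immediate from the definition).
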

From this preparatory lemma, we derive the following Kramers' type law for $C(\sigma)$.

\begin{thm}
\label{victoire2} Let $\lambda_0$ be the unique minimizer of $H_0$ given by \eqref{collisioncost}. Then,
for any~$\delta>0$,
\begin{equation}
\label{eq:victoire2}
\begin{aligned}
&\lim_{\sigma\to0}\PP\left\{\exp\left[\frac{2}{\sigma^2}\left(H_0(\lambda_0)-\delta\right)\right]<C(\sigma)<\exp\left[\frac{2}{\sigma^2}\left(
H_0(\lambda_0)+\delta\right)\right]\right\}=1\,,
\end{aligned}
\end{equation}
and 
\begin{equation}
\label{eq:victoirebis2}
\lim_{\sigma\to0}\PP\left\{\left|X_{C(\sigma)}-\lambda_0\right|\leq\delta\right\}=1\,.
\end{equation}
\end{thm}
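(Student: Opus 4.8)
The plan is to reduce the exact collision-time $C(\sigma)$ to a minimisation over the family $\{C_{z,z}(\sigma)\}_{z\in\mathbb R}$ already controlled by Lemma~\ref{prison}, in the same spirit as the passage from $\tau_{\lambda,\epsilon}$ to $c_\epsilon$ in Proposition~\ref{lacollision}. First I would record that, by $(\mathbf A)-(iv)$ and $\lambda_1<\lambda_2$, we have $C(\sigma)=\inf_{z\in\mathbb R}C_{z,z}(\sigma)$: the inequality $C(\sigma)\le C_{z,z}(\sigma)$ is immediate since $X_t\ge z$ and $Y_t\le z$ force $X_t\ge Y_t$, while $C(\sigma)\ge C_{z^\star,z^\star}(\sigma)\ge\inf_z C_{z,z}(\sigma)$ for $z^\star=X_{C(\sigma)}=Y_{C(\sigma)}$. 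Next I would observe that, along the diagonal $z_1=z_2=z$, the exit-cost of Lemma~\ref{prison} becomes exactly $\tilde H_0(z,z)=\Psi_1(z)-\Psi_1(\lambda_1)+\Psi_2(z)-\Psi_2(\lambda_2)=H_0(z)$ (using $\Psi_i(x)=V(x)+F(x-\lambda_i)$ and $V(\lambda_i)=\Psi_i(\lambda_i)$), so the candidate rate is $\inf_z H_0(z)=H_0(\lambda_0)$, with $\lambda_0$ the unique minimiser guaranteed by the synchronisation assumption $(\mathbf A)-(iii)$ (uniform convexity of $H_0$).

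For the \textbf{upper bound} in \eqref{eq:victoire2}, I would simply apply the upper-tail half of \eqref{alberto2} to $C_{\lambda_0,\lambda_0}(\sigma)$: since $C(\sigma)\le C_{\lambda_0,\lambda_0}(\sigma)$ a.s., and $\tilde H_0(\lambda_0,\lambda_0)=H_0(\lambda_0)$, we get $\lim_{\sigma\to0}\mathbb P\{C(\sigma)<\exp[\frac{2}{\sigma^2}(H_0(\lambda_0)+\delta)]\}=1$. The \textbf{lower bound} and the localisation \eqref{eq:victoirebis2} are the substantive part, and I would treat them together exactly as in Proposition~\ref{lacollision}: it suffices to show the collision-point $X_{C(\sigma)}$ persists near $\lambda_0$, i.e.\ $\lim_{\sigma\to0}\mathbb P\{|X_{C(\sigma)}-\lambda_0|\le\delta\}=1$, and then a covering argument finishes the lower tail. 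Concretely, given $\delta>0$, cover the compact set $\{z:|z-\lambda_0|\ge\delta,\ z\in K\}$ (for a large compact $K$ controlling the escape of $X$ to infinity, handled by Theorem~\ref{thm:KramersDZ} applied to $x^\sigma$ on a stable ball around $\lambda_1$, together with the coupling $X\approx x^\sigma$) by finitely many intervals $I_l=(z_l-r,z_l+r)$; on the event $\{X_{C(\sigma)}\in I_l\}=\{Y_{C(\sigma)}\in I_l\}$ one has $C(\sigma)\ge C_{z_l+r,\,z_l-r}(\sigma)$, whose exit-cost $\tilde H_0(z_l+r,z_l-r)$ can be made $>H_0(\lambda_0)+\delta'$ for some $\delta'>0$ by taking $r$ small (since $z_l\notin(\lambda_0-\delta,\lambda_0+\delta)$ and $\tilde H_0(z,z)=H_0(z)>H_0(\lambda_0)$ strictly there, with continuity in $r$). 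Then \eqref{alberto2} for each $C_{z_l+r,z_l-r}(\sigma)$, combined with the already-proved upper tail of $C(\sigma)$, forces $\mathbb P\{C(\sigma)\le\exp[\frac2{\sigma^2}(H_0(\lambda_0)-\delta)],\ X_{C(\sigma)}\in I_l\}\to0$; summing over $l$ gives both the persistence and the lower tail.

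The \textbf{main obstacle} I anticipate is the same one that forced the detour in the multidimensional case: a priori the set $(\mathbb R\times\mathbb R)\setminus(\mathcal D^1_{z_1}\times\mathcal D^2_{z_2})$ need not be stable, and more importantly $C(\sigma)$ itself is an infimum over an uncountable family of stopping times, so the localisation estimate \eqref{eq:victoirebis2} cannot be read off directly --- one genuinely needs the compactness/covering step and the uniform-in-$r$ continuity of the exit-cost $\tilde H_0$, plus a tightness argument ruling out $X_{C(\sigma)}$ escaping to infinity. In dimension one, however, $\mathbb R\setminus\mathcal D^i_{z_i}$ is automatically an interval stable by $-\nabla\Psi_i$ whenever $z_1>\lambda_1$, $z_2<\lambda_2$ (by convexity of $\Psi_i$), so no artificial enlargement of the exit set is needed, and Lemma~\ref{prison} is already in the exact form required; the remaining work is the routine covering bookkeeping. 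One should also check the degenerate cases $z_1\le\lambda_1$ or $z_2\ge\lambda_2$ contribute a strictly larger (in fact infinite-order, or at least non-optimal) cost and hence do not affect $\inf_z H_0(z)$, which is immediate since $H_0$ is minimised at the interior point $\lambda_0$ lying strictly between $\lambda_1$ and $\lambda_2$.
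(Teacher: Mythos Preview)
Your proposal follows essentially the same three-stage strategy as the paper (upper bound via $C(\sigma)\le C_{\lambda_0,\lambda_0}(\sigma)$, localisation of $X_{C(\sigma)}$ by a finite covering, then lower bound from localisation), and the key identification $\tilde H_0(z,z)=H_0(z)$ is exactly what drives both arguments. Two small points are worth flagging. First, in your covering step you write ``on $\{X_{C(\sigma)}\in I_l\}$ one has $C(\sigma)\ge C_{z_l+r,\,z_l-r}(\sigma)$'': this inequality is false as stated, since $X_{C(\sigma)}\in(z_l-r,z_l+r)$ does \emph{not} imply $X_{C(\sigma)}\ge z_l+r$. The correct comparison (and the one the paper uses) is $C(\sigma)\ge C_{z_l-r,\,z_l+r}(\sigma)$, whose exit-cost $\tilde H_0(z_l-r,z_l+r)$ still converges to $H_0(z_l)>H_0(\lambda_0)$ as $r\downarrow0$, so your argument survives once the indices are swapped. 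Second, for the compactification, the paper exploits the one-dimensional ordering more directly than your stable-ball/coupling suggestion: a collision outside $[\lambda_1,\lambda_2]$ forces either $Y$ below $\lambda_1$ or $X$ above $\lambda_2$, and these individual hitting times already have exit-costs $\Psi_2(\lambda_1)-\Psi_2(\lambda_2)$ and $\Psi_1(\lambda_2)-\Psi_1(\lambda_1)$, both strictly larger than $H_0(\lambda_0)$; this gives the compact $[\lambda_1,\lambda_2]$ for free without invoking the coupling.
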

\begin{proof}

 \noindent{}{\bf Step 1.} Let us first prove the upper-tail estimate in \eqref{eq:victoire3}, starting with the inequality
\begin{equation*}
\PP\left\{C(\sigma)\geq\exp\Big[\frac{2}{\sigma^2}\left(H_0(\lambda_0)+\delta\right)\Big]\right\}\le
\PP\left\{C_{\lambda_0,\lambda_0}(\sigma)\geq\exp\Big[\frac{2}{\sigma^2}\left(H_0(\lambda_0)+\delta\right)\Big]\right\}\,,
\end{equation*}
with $C_{\lambda_0,\lambda_0}(\sigma)$ defined as in \eqref{buffer_collision1d}. This inequality simply results from Lemma~\ref{prison}; due to the convexity of $\Psi_1$ and~$\Psi_2$, the function $z\mapsto\Psi_1(z)+\Psi_2(z)-\Psi_1(\lambda_1)-\Psi_2(\lambda_2)$ is decreasing on $(-\infty;\lambda_1]$ and increasing on $[\lambda_2;+\infty)$, the unique minimizer of $\tilde H_0$ is necessarily achieved in $(\lambda_1;\lambda_2)$. As $\tilde H_0(z,z)=H_0(z)$, $\lambda_0$ lies in $(\lambda_1;\lambda_2)$. For any $\rho>0$ sufficiently small, as~$x_1<x_2$ we know that the first time the diffusion~$(X,Y)$ reaches the point $(\lambda_0,\lambda_0)$ necessarily occurs before $(X,Y)$ enters the region~$[\lambda_0+\rho;\infty)\times(-\infty;\lambda_0-\rho]$. Therefore,
\begin{align*}
&\PP\left\{C_{\lambda_0,\lambda_0}(\sigma)\geq\exp\Big[\frac{2}{\sigma^2}\left(H_0(\lambda_0)+\delta\right)\Big]\right\}\\
&\le \PP\left\{C_{\lambda_0+\rho,\lambda_0-\rho}(\sigma)\ge\exp\Big[\frac{2}{\sigma^2}\left(H_0(\lambda_0)+\delta\right)\Big]\right\}\,.
\end{align*}
As $\Psi_1(\lambda_0+\rho)+\Psi_2(\lambda_0-\rho)$ converges to $H_0(\lambda_0)$ as $\rho\downarrow 0$, taking $\rho$ sufficiently small so that~$\Psi_1(\lambda_0+\rho)+\Psi_2(\lambda_0-\rho)\le H_0(\lambda_0)+\frac{\delta}{2}$ implies, by \eqref{alberto2},
\begin{equation*}
\lim_{\sigma\to0}\PP\left\{C_{\lambda_0+\rho,\lambda_0-\rho}(\sigma)\ge\exp\Big[\frac{2}{\sigma^2}\left(H_0(\lambda_0)+\delta\right)\Big]\right\}=0\,,
\end{equation*}
and so
\begin{equation}\label{proof_victoire2_1}
\lim_{\sigma\to0}\PP\left\{C(\sigma)\geq\exp\Big[\frac{2}{\sigma^2}\left(H_0(\lambda_0)+\delta\right)\Big]\right\}=0\,.
\end{equation}
\noindent{}{\bf Step 2. } For the collision-location estimate \eqref{eq:victoirebis2}, let us check that $\lim_{\sigma\rightarrow 0}\mathbb{P}\{|X_{C(\sigma)}-\lambda_0|>\rho\}=0$ for any $\rho>0$. To this aim, we will show that the collision does not persist outside~$[\lambda_1;\lambda_2]$ then, by a compactness argument, we will show that it necessarily occurs within~$(\lambda_0-\rho;\lambda_0+\rho)$.\\
\noindent{}{\bf Step 2.1. } Start with the following inequality:
\begin{align*}
\PP\left\{X_{C(\sigma)}\notin[\lambda_1;\lambda_2]\right\}&\leq\PP\left\{X_{C(\sigma)}\notin[\lambda_1;\lambda_2],C(\sigma)\leq\exp\Big[\frac{2}{\sigma^2}\left(H_0(\lambda_0)+\delta\right)\Big]\right\}\\
&\quad+\PP\left\{C(\sigma)\geq\exp\Big[\frac{2}{\sigma^2}\left(H_0(\lambda_0)+\delta\right)\Big]\right\}\,,
\end{align*}
for some $\delta>0$. Due to \eqref{proof_victoire2_1}, the second term in the r.h.s. tends to $0$ as $\sigma\downarrow 0$. Defining~$\tilde\beta_1(\sigma):=\inf\left\{t\geq0\,\,:\,\,Y_t\leq\lambda_1\right\}$ and~$\tilde\beta_2(\sigma):=\inf\left\{t\geq0\,\,:\,\,X_t\geq\lambda_2\right\}$, we have 

\begin{align*}
&\PP\left\{X_{C(\sigma)}\notin[\lambda_1;\lambda_2],C(\sigma)\leq\exp\Big[\frac{2}{\sigma^2}\left(H_0(\lambda_0)+\delta\right)\Big]\right\}\\
&\leq\PP\left\{\tilde\beta_1(\sigma)\leq\exp\Big[\frac{2}{\sigma^2}\left(H_0(\lambda_0)+\delta\right)\Big]\right\}+\PP\left\{\tilde\beta_2(\sigma)\leq\exp\Big[\frac{2}{\sigma^2}\left(H_0(\lambda_0)+\delta\right)\Big]\right\}\,.
\end{align*}
Choosing $\delta$ small enough so that $\min\left\{\Psi_1(\lambda_2)-\Psi_1(\lambda_1);\Psi_2(\lambda_1)-\Psi_2(\lambda_2)\right\}>H_0(\lambda_0)+\delta$, Theorem~\ref{thm:KramersDZ} yields, for $k=1,2$,

\[
\lim_{\sigma\rightarrow 0}\PP\left\{\tilde\beta_k(\sigma)\leq\exp\Big[\frac{2}{\sigma^2}\left(H_0(\lambda_0)+\delta\right)\Big]\right\}=0\,.
\]
We immediately deduce
\begin{equation*}
\lim_{\sigma\to0}\PP\left\{X_{C(\sigma)}\notin[\lambda_1;\lambda_2]\right\}=0\,.
\end{equation*}

\noindent{}{\bf Step 2.2. } Given {\bf Step 2.1}, we can focus our claim on showing $X_{C(\sigma)}$ does not persist on~$]\lambda_0-\rho;\lambda_0+\rho[^c\cap[\lambda_1;\lambda_2]=[\lambda_1;\lambda_0-\rho]\cup[\lambda_0+\rho;\lambda_2]$. It is further sufficient to check this assertion for the interval $[\lambda_0+\rho;\lambda_2]$, 
the reasoning for the case $[\lambda_1;\lambda_0-\rho]$ being similar. As $[\lambda_0+\rho;\lambda_2]$ is a compact interval, we can write $[\lambda_0+\rho;\lambda_2]\subset\cup_{k=1}^L]\eta_k-r;\eta_k+r[$ - where $r>0$ will be chosen sufficiently small later on. Observe that
\begin{align*}
&\PP\left\{X_{C(\sigma)}\in[\eta_k-r;\eta_k+r]\right\}\le\PP\left\{C(\sigma)\geq\exp\Big[\frac{2}{\sigma^2}\left(H_0(\lambda_0)+\delta\right)\Big]\right\}\\
&\quad+\PP\left\{X_{C(\sigma)}\in[\eta_k-r;\eta_k+r],C(\sigma)\le\exp\Big[\frac{2}{\sigma^2}\left(H_0(\lambda_0)+\delta\right)\Big]\right\}=:I_1(\sigma)+I_2(\sigma)\,.
\end{align*}
According to \eqref{proof_victoire2_1},
\begin{equation*}
\lim_{\sigma\rightarrow 0}I_1(\sigma)=\lim_{\sigma\rightarrow 0}\PP\left\{C(\sigma)\geq\exp\Big[\frac{2}{\sigma^2}\left(H_0(\lambda_0)+\delta\right)\Big]\right\}=0\,.
\end{equation*} 
The second term, $I_2(\sigma)$, can be estimated with the upper-bound:
\begin{align*}
&\PP\left\{X_{C(\sigma)}\in[\eta_k-r;\eta_k+r],C(\sigma)\leq\exp\Big[\frac{2}{\sigma^2}\left(H_0(\lambda_0)+\delta\right)\Big]\right\}\\
&\leq\PP\left\{C_{\eta_k-r,\eta_k+r}(\sigma)\leq\exp\Big[\frac{2}{\sigma^2}\left(H_0(\lambda_0)+\delta\right)\Big]\right\}\,.
\end{align*}
Since $\widetilde{H}_0(\eta_k-r,\eta_k+r)>H_0(\lambda_0)$, by taking $\delta$ sufficiently small,
\begin{align*}
&\PP\left\{C_{\eta_k-r,\eta_k+r}(\sigma)\leq\exp\Big[\frac{2}{\sigma^2}\left(H_0(\lambda_0)+\delta\right)\Big]\right\}\\
&\leq\PP\left\{C_{\eta_k-r,\eta_k+r}(\sigma)\leq\exp\Big[\frac{2}{\sigma^2}\left(\widetilde{H}_0(\eta_k-r,\eta_k+r)-\delta\right)\Big]\right\}\,.
\end{align*}
Applying Lemma \ref{prison}, it follows that $\lim_{\sigma\rightarrow 0}I_2(\sigma)=0$, and so:
\begin{equation*} 
\lim_{\sigma\to0}\PP\left\{X_{C(\sigma)}\in[\eta_k-r;\eta_k+r]\right\}=0\,.
\end{equation*}
As such, 
\begin{equation*}
\lim_{\sigma\to0}\PP\left\{X_{C(\sigma)}\in[\lambda_1;\lambda_2]\setminus[\lambda_0-\rho;\lambda_0+\rho]\right\}=0.
\end{equation*}

\noindent{}{\bf Step 3.} We complete the proof with the lower-tail estimate in \eqref{eq:victoire2}. For $\rho>0$, given the event~$\{| X_{C(\sigma)}-\lambda_0|\leq\rho\}$, remark that the following inclusion:
\begin{equation*}
\Big\{C(\sigma)\leq\exp\Big[\frac{2}{\sigma^2}\left(H_0(\lambda_0)-\delta\right)\Big]\Big\}\subset\Big\{C_{\lambda_0+y,\lambda_0+y}(\sigma)\leq\exp\Big[\frac{2}{\sigma^2}\left(H_0(\lambda_0)-\delta\right)\Big]\Big\}\,,
\end{equation*}
holds for any $y$ lying in the interval $(-\rho,\rho)$. Therefore, for any $\rho>0$,
\begin{align*}
&\PP\left\{C(\sigma)\leq\exp\Big[\frac{2}{\sigma^2}\left(H_0(\lambda_0)-\delta\right)\Big]\right\}
\leq\PP\left\{| X_{C(\sigma)}-\lambda_0|\geq\rho\right\}\\
&\quad +\PP\left\{\inf_{y\in[-\rho,\rho]}C_{\lambda_0+y,\lambda_0+y}(\sigma)\leq\exp\Big[\frac{2}{\sigma^2}\left(H_0(\lambda_0)-\delta\right)
\Big]\right\}\,.
\end{align*}
According to {\bf Step 2}, the first upper-bound vanishes as $\sigma\downarrow 0$. The second upper-bound can be estimated by
\begin{align*}
&\PP\left\{\inf_{y\in[-\rho,\rho]}C_{\lambda_0+y,\lambda_0+y}(\sigma)\leq\exp\Big[\frac{2}{\sigma^2}\left(H_0(\lambda_0)-\delta\right)\Big]\right\}\\
&\leq\PP\left\{C_{\lambda_0-\rho,\lambda_0+\rho}(\sigma)\leq\exp\Big[\frac{2}{\sigma^2}\left(H_0(\lambda_0)-\delta\right)\Big]\right\}\,.
\end{align*}
As $\lim_{\rho\rightarrow 0}\Psi_1(\lambda_0-\rho)+\Psi_2(\lambda_0+\rho)= H_0(\lambda_0)$, $\rho$ can be chosen sufficiently small so that~$\Psi_1(\lambda_0-\rho)+\Psi_2(\lambda_0+\rho)\geq H_0(\lambda_0)-\frac{\delta}{2}$. Thus, the limit~\eqref{alberto2} implies
\begin{equation*}
\lim_{\sigma\to0}\PP\left\{C_{\lambda_0-\rho,\lambda_0+\rho}(\sigma)\leq\exp\Big[\frac{2}{\sigma^2}\left(H_0(\lambda_0)-\delta\right)\Big]\right\}=0\,,
\end{equation*}
and so
\begin{equation*}
\lim_{\sigma\to0}\PP\left\{C(\sigma)\leq\exp\Big[\frac{2}{\sigma^2}\left(H_0(\lambda_0)-\delta\right)\Big]\right\}=0\,.
\end{equation*}
\end{proof}
The analog of Theorem \ref{victoire2} for the particle systems $(X^{1,N},Y^{1,N}),\cdots,(X^{N,N},Y^{N,N})$ is given by the following:
 \begin{thm}
\label{victoire3} For $N$ sufficiently large, it holds: 
 for any $\delta>0$, $1\le i\le N$,
\begin{equation}
\label{eq:victoire3}
\lim_{\sigma\to0}\PP\left\{\exp\left[\frac{2}{\sigma^2}\left(H_0(\lambda_0)-\delta\right)\right]<
C^i_N(\sigma)<\exp\left[\frac{2}{\sigma^2}\left(H_0(\lambda_0)+\delta\right)\right]\right\}=1\,,
\end{equation}
and 
\begin{equation*}
\lim_{\sigma\to0}\PP\left\{| X_{C^i_N(\sigma)}^{i,N}-\lambda_0|\leq\delta\right\}=1\,.
\end{equation*}
\end{thm}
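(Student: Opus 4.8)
The plan is to mirror the proof of Theorem \ref{victoire2}, replacing the role of the coupling Lemma \ref{lucille} by its particle analog Proposition \ref{lucille2}, and the role of Lemma \ref{prison} by a hitting-time estimate for the linearized particle diffusions $x^{i,\sigma}_{T,\cdot}$ and $y^{i,\sigma}_{T,\cdot}$ defined in \eqref{xi}--\eqref{yi}. As in the one-dimensional self-stabilizing case, under the ordering $\lambda_1<\lambda_2$ and $(\mathbf A)-(iv)$ we have $x_1<x_2$, so $C^i_N(\sigma)$ is the first time $X^{i,N}_t\ge Y^{i,N}_t$, and it can be written $C^i_N(\sigma)=\inf_{z\in\mathbb R}C^i_{z,z,N}(\sigma)$ for $C^i_{z_1,z_2,N}(\sigma):=\inf\{t\ge0:X^{i,N}_t\ge z_1,\,Y^{i,N}_t\le z_2\}$. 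Since the linearized one-particle diffusions are driven by the uniformly convex potentials $\Psi_1,\Psi_2$ of \eqref{GlobalPot}, the half-line sets $[z_1;+\infty)$ and $(-\infty;z_2]$ have stable complements, Theorem \ref{thm:KramersDZ} applies directly to their exit-times, and one obtains the Kramers' law for the linearized hitting time $c^i_{z_1,z_2,N}(\sigma)$ with exit-cost $\tilde H_0(z_1,z_2)=\Psi_1(z_1)-\Psi_1(\lambda_1)+\Psi_2(z_2)-\Psi_2(\lambda_2)$, exactly as in Lemma \ref{prison}.

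Next I would transfer this to the true particle system by the coupling Proposition \ref{lucille2}. Fix $\xi>0$ small; for $N\ge N_{\varepsilon,\xi}$ and starting time $T_{\varepsilon,\xi}$, on the event of coupling the path of $(X^{i,N},Y^{i,N})$ stays within $\xi$ of $(x^{i,\sigma}_{T_{\varepsilon,\xi},\cdot},y^{i,\sigma}_{T_{\varepsilon,\xi},\cdot})$ on the relevant time interval $[T_{\varepsilon,\xi},\exp[\tfrac2{\sigma^2}(\underline H_\varepsilon+2)]]$; and, as in the proof of Proposition \ref{torche}, the deterministic time $T_{\varepsilon,\xi}$ is eventually dominated by $\exp[\tfrac2{\sigma^2}(\tilde H_0(z_1,z_2)-\delta)]$ as $\sigma\downarrow0$ since $C^i_{z_1,z_2,N}(\sigma)\to\infty$ a.s. This yields a particle analog of Lemma \ref{prison}: for $z_1>\lambda_1,z_2<\lambda_2$ and any $\delta>0$,
\begin{equation*}
\lim_{\sigma\to0}\mathbb P\left\{\exp\Big[\tfrac2{\sigma^2}(\tilde H_0(z_1,z_2)-\delta)\Big]<C^i_{z_1,z_2,N}(\sigma)<\exp\Big[\tfrac2{\sigma^2}(\tilde H_0(z_1,z_2)+\delta)\Big]\right\}=1,
\end{equation*}
together with the collision-location persistence near $(z_1,z_2)$. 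One must check that the upper cut-off $\exp[\tfrac2{\sigma^2}(\underline H_\varepsilon+2)]$ in Proposition \ref{lucille2} exceeds the relevant $\exp[\tfrac2{\sigma^2}(\tilde H_0(z_1,z_2)+\delta)]$ for the parameters used; since in the end only $z_1,z_2$ close to $\lambda_0$ matter and $H_0(\lambda_0)=\tilde H_0(\lambda_0,\lambda_0)$, while $\underline H_\varepsilon\to H_0(\lambda_0)$, the margin $+2$ is comfortable, and $\varepsilon<\varepsilon_c$ may be fixed small enough so that $\underline H_\varepsilon+2>H_0(\lambda_0)+\delta$.

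With the particle analog of Lemma \ref{prison} in hand, Steps 1--3 of the proof of Theorem \ref{victoire2} carry over verbatim with $X^{i,N},Y^{i,N},C^i_N(\sigma),C^i_{z_1,z_2,N}(\sigma)$ in place of $X,Y,C(\sigma),C_{z_1,z_2}(\sigma)$: using $C^i_N(\sigma)\le C^i_{\lambda_0,\lambda_0,N}(\sigma)\le C^i_{\lambda_0+\rho,\lambda_0-\rho,N}(\sigma)$ for the upper tail; using Kramers' law for the one-particle exit times $\tilde\beta^i_1(\sigma)=\inf\{t:Y^{i,N}_t\le\lambda_1\}$, $\tilde\beta^i_2(\sigma)=\inf\{t:X^{i,N}_t\ge\lambda_2\}$ --- which by the coupling again reduce to exit times of the uniformly convex $\Psi_1,\Psi_2$ diffusions --- to confine $X^{i,N}_{C^i_N(\sigma)}$ to $[\lambda_1;\lambda_2]$; a finite-cover/compactness argument on $[\lambda_1;\lambda_2]\setminus(\lambda_0-\rho,\lambda_0+\rho)$ exploiting $\tilde H_0(\eta-r,\eta+r)>H_0(\lambda_0)$; and the inclusion $\{C^i_N(\sigma)\le\exp[\tfrac2{\sigma^2}(H_0(\lambda_0)-\delta)]\}\cap\{|X^{i,N}_{C^i_N(\sigma)}-\lambda_0|\le\rho\}\subset\{C^i_{\lambda_0-\rho,\lambda_0+\rho,N}(\sigma)\le\exp[\tfrac2{\sigma^2}(H_0(\lambda_0)-\delta)]\}$ for the lower tail. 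The main obstacle is not the combinatorics of these steps but ensuring the coupling of Proposition \ref{lucille2} is valid uniformly over the (finitely many) shifted targets $\lambda_0\pm\rho$, $\eta_k\pm r$, $\lambda_1$, $\lambda_2$ simultaneously and over the full time window up to $\exp[\tfrac2{\sigma^2}(H_0(\lambda_0)+\delta)]$; this is handled by choosing a single $\kappa$ (hence single $N_{\varepsilon,\kappa}$, $T_{\varepsilon,\kappa}$) small enough to serve all of them, which is possible since they form a finite set bounded away from the obstacles, and by fixing $\varepsilon<\varepsilon_c$ small so that $\underline H_\varepsilon+2$ dominates every exit-cost appearing in the argument.
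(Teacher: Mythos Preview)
Your proposal is correct and follows essentially the same route as the paper: the paper's proof consists of the single observation that, owing to Proposition~\ref{lucille2}, Lemma~\ref{prison} transfers to the particle hitting times $C^i_{N,z_1,z_2}(\sigma)$, after which the three steps of Theorem~\ref{victoire2} replicate verbatim. Your write-up is in fact more careful than the paper's, since you make explicit the one genuine point the paper suppresses --- that the time-window $\exp[\tfrac{2}{\sigma^2}(\underline H_\varepsilon+2)]$ in Proposition~\ref{lucille2} must dominate every relevant time scale $\exp[\tfrac{2}{\sigma^2}(H_0(\lambda_0)+\delta)]$ appearing in Steps~1--3, which is secured by fixing a small $\varepsilon<\varepsilon_c$ and using $\underline H_\varepsilon\to H_0(\lambda_0)$.
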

\begin{proof} The proof readily follows the main steps of Theorem \ref{victoire2}: owing to Proposition~\ref{lucille2}, Lemma \ref{prison} still holds true for 
\[
C^{i}_{N,z_1,z_2}(\sigma):=\inf\{t\ge 0\,:\,(X^{i,N}_t,Y^{i,N}_t)\notin (\mathbb R\times\mathbb R)\setminus \mathcal D_{z_1}\times \mathcal D_{z_2}\}
\]
 in place of $C_{z_1,z_2}(\sigma)$. From this, since the proof arguments of Theorem \ref{victoire2} essentially rely on the regularity of $H_0$, one can replicate each argument replacing $C(\sigma)$ by $C^{i}_{N}(\sigma)$.
 \end{proof}

 \begin{rem} Although we left aside the linear case considered in Section \ref{sec:LinearCase}, let us point out that, following the same proof arguments, an analog to Theorem \ref{victoire2} can be established for $(x^\sigma,y^\sigma)$: assuming that $\Psi_1$ and $\Psi_2$ {\color{black}are uniformly} convex, of class $\mathcal C^2$ and such that \eqref{Linear:Init} holds, and that, for $\lambda_i={\rm argmin}\Psi_i$, $\lambda_1<\lambda_2$, then, given
$$
c(\sigma)=\inf\left\{t\geq0\,\,:\,\,x_t^\sigma=y_t^\sigma\right\}\,,
$$
and $z_0$ the (unique) minimizer of $h_0(z)=\Psi_1(z)+\Psi_2(z)-\Psi_1(\lambda_1)-\Psi_1(\lambda_2)$,
for any $\delta>0$, we have
\begin{equation*}
\begin{aligned}
&\lim_{\sigma\to0}\PP\left\{\exp\left[\frac{2}{\sigma^2}\left(h_0(z_0)-\delta\right)\right]<c(\sigma)<\exp\left[\frac{2}{\sigma^2}\left(
h_0(z_0)+\delta\right)\right]\right\}=1\,,
\end{aligned}
\end{equation*}
and 
\begin{equation*} 
\lim_{\sigma\to0}\PP\left\{\left|x_{c(\sigma)}^\sigma-z_0\right|\leq\delta\right\}=1\,.
\end{equation*}	
\end{rem}
 
\paragraph{Acknowledgments}
For the first author, the paper was prepared within the framework of the Basic Research Program at HSE University. The second author acknowledges the
support of the French ANR grant
'METANOLIN' (ANR-19-CE40-0009). This work has been initiated at the end of 2020 and finalized during the visit of the first author at the University Jean-Monnet in 2022. The first author expresses his gratitude to all members of the university for their hospitality during this stay.  

\footnotesize{	

}

\begin{thebibliography}{4}

\bibitem[A89]{Arrhenius-1889}
S.~Arrhenius.
\newblock {\em Z. Phys. Chem. Stochastic Process. Appl.}, 4 (1889), 226.

\bibitem[BRTV98]{BRTV}
S.~Benachour, B.~Roynette, D.~Talay, P.~Vallois.
\newblock Nonlinear self-stabilizing processes. {I}. {E}xistence, invariant
  probability, propagation of chaos.
\newblock {\em Stochastic Process. Appl.}, 75 (1998), no. 2, 173--201.

\bibitem[BRV98]{BRV}
S.~Benachour, B.~Roynette, P.~Vallois.
\newblock Nonlinear self-stabilizing pro- cesses. II. Convergence to invariant probability.
\newblock {\em Stochastic Process. Appl.}, 75 (1998), no. 2, 203--224.

\bibitem[BCCP98]{BCCP}
D.~Benedetto, E.~Caglioti, J.~A. Carrillo \and M.~Pulvirenti.
\newblock A non-{M}axwellian steady distribution for one-dimensional granular
  media.
\newblock {\em J. Statist. Phys.}, 91 (1998), no. 5-6, 979--990.

\bibitem[B13]{Berglund-13}
N.~Berglund.
\newblock Kramers'~law: Validity, derivations and generalizations.
\newblock {\em Markov Proc. and Related Fields}, 19 (2013), no. 3, 459--490.

\bibitem[BGM10]{BoGuiMa-10}
F.~Bolley, A.~Guillin, F.~Malrieu.
\newblock Trend to equilibrium and particle approximation for a weakly self-consistent Vlasov-Fokker-Planck equation
\newblock {\em ESAIM: Mathematical Modelling and Numerical Analysis}, 44 (2010) no. 5, 867--884.

\bibitem[BGG13]{BGG2}
F.~Bolley, I.~Gentil, A.~Guillin.
\newblock Uniform convergence to equilibrium for granular media.
\newblock {\em Archive for Rational Mechanics and Analysis}, 208 (2013), no. 2, 429--445.

\bibitem[B03]{Bossy-03}
M.~Bossy.
\newblock Some stochastic particle methods for nonlinear parabolic PDEs.
\newblock In {\em Proceedings of 2005 GRIP Summer School}, (2003), Volume 15 of ESAIM Proc., 18-57.

\bibitem[CMV03]{CMV2003}
J.~A.~Carrillo, R.~J.~McCann, C.~Villani.
\newblock Kinetic equilibration rates for granular media and related equations: entropy dissipation and mass transportation estimates.
\newblock {\em Rev. Mat. Iberoamericana} 19 (2003), no. 3, 971--1018.


\bibitem[CGM08]{CGM}
P.~Cattiaux, A.~Guillin, F.~Malrieu.
\newblock Probabilistic approach for granular media equations in the non-uniformly convex case, 
\newblock {\em Probab. Theory Related Fields}, 140 (2008), no. 1-2, 19--40.


\bibitem[CDL18]{CDL}
P.~Cattiaux, L.~Delebecque, L.~P\'ed\'eches.
\newblock Stochastic Cucker Smale: Old and new. 
\newblock {\em The Annals of Applied Probability}, 28 (2018), no. 5, 3239-3286. 

\bibitem[CD21]{Chaintron-Diez-21}
P.-L.~Chaintron, A.~Diez. 
\newblock Propagation of chaos: a review of models, methods and applications.
\newblock Preprint, 2021.

\bibitem[CS07a]{CS07a}
F.~Cucker, S.~Smale.
\newblock  Emergent behavior in flocks. 
\newblock {\em IEEE Trans. Aut. Control}, 52 (2007), no. 5, 852--852.

\bibitem[CS07b]{CS07b}
F.~Cucker, S.~Smale.
\newblock  On the mathematics of emergence. 
\newblock {\em Japanese Journal of Mathematics}, 2 (2007), 197-227.


\bibitem[D83]{Dawson-83}
D.~A.~Dawson.
\newblock Critical dynamics and fluctuations for a mean-field model of cooperative behavior.
\newblock {\em Journal of Statistical Physics}, 31 (1983), no. 1, 29-85.

\bibitem[DZ10]{DZ}
A.~Dembo, O.~Zeitouni.
\newblock \emph{Large deviations techniques and applications}. 
\newblock Volume~38 of \emph{Stochastic Modelling and Applied Probability}. Springer-Verlag, Berlin, 2010.  

\bibitem[DT18]{DuongTugaut2018}
M.~H.~Duong, J.~Tugaut. 
\newblock The Vlasov-Fokker-Planck equation in non-convex landscapes: convergence to equilibrium.
\newblock {\em Electron. Commun. Probab.}, 23 (2018), 1--10.

 
\bibitem[E35]{Eyring-35}
E.~ B.~Eyring.
\newblock The activated complex in chemical reactions.
\newblock {\em Journal of Chemical Reactions}, 3 (1935), 107--115.


\bibitem[FW98]{FW98}
M.~I. Freidlin, A.~D. Wentzell.
\newblock \emph{Random perturbations of dynamical systems}. 
\newblock Springer-Verlag, New York, second edition, 1998.

\bibitem[HIP08]{HIP}
S.~Herrmann, P.~Imkeller, D.~Peithmann.
\newblock Large deviations and a {K}ramers' type law for self-stabilizing
 diffusions.
\newblock {\em Ann. Appl. Probab.}, 18 (2008), no 4, 1379--1423.


\bibitem[JW17]{JabinWang-17}
P.-E.~Jabin, Z.~Wang.
\newblock Mean field limit for stochastic particle systems.
\newblock {\em Active Particles, Volume 1: Advances in Theory, Models and Applications}, (2017), 379-402.

\bibitem[K40]{Kramers-40}
H.~A.~Kramers.
\newblock Brownian motion in a field of force and the diffusion model of chemical reaction.
\newblock {\em Physica}, VII (1940), no. 4, 284--304. 

\bibitem[L84]{Laider-84}
K.~J.~Laider.
\newblock The development of the Arrhenius law.
\newblock {\em J. Chem. Educ.}, 61 (1984), no. 6, 494--498. 


\bibitem[M01]{Malrieu2001}
F.~Malrieu.
\newblock Logarithmic {S}obolev inequalities for some nonlinear {PDE}'s.
\newblock {\em Stochastic Process. Appl.}, 95 (2001), no. 1, 109--132.

\bibitem[McK66]{McKean-66}
H.~P.~McKean.
\newblock A Class of Markov Processes Associated With Nonlinear Parabolic Equations.
\newblock {\em Proc . Nat. Acad. Sci. U.S.A.}, 56 (1966), no. 6, 1907--1911.

\bibitem[McK67]{McKean-67}
H.~P.~McKean.
\newblock Propagation of chaos for a class of nonlinear parabolic equations.
\newblock {\em Stochastic Differential Equations (Lecture Series in Differential Equations, Session 7, Catholic
Univ., 1967), Air Force Office Sci. Res., Arlington, Va.}, (1967), 41--57.

\bibitem[NPT10]{NaldiPareTosc-10}
G.~Naldi, L.~Pareschi and G.~Toscani,
\newblock Mathematical Modeling of Collective Behavior in Socio-Economic and Life Sciences.
\newblock Springer Nature (2010).
 
\bibitem[PT14]{PareTosc-14}
L.~Pareschi and G.~Toscani
\newblock Interacting Multiagent Systems: Kinetic equations and Monte Carlo methods.
\newblock  Oxford University Press (2014).
 
\bibitem[P17]{Pedeches2017}
L.~P\'ed\'eches.
\newblock  Stochastic Models for Collective Motions of Population.
\newblock PhD thesis (2017).

\bibitem[T84]{Tamura1984}
Y.~Tamura.
\newblock On asymptotic behaviors of the solution of a nonlinear diffusion
  equation.
\newblock {\em J. Fac. Sci. Univ. Tokyo Sect. IA Math.}, 31 (1984), no. 1, 195--221.

\bibitem[T13]{AOP}
J.~Tugaut.
\newblock Convergence to the equilibria for self-stabilizing processes in
 double-well landscape.
\newblock {\em Ann. Probab.}, 41 (2013), no. 3A, 1427--1460.
 
\bibitem[T18]{JOTP}
J.~Tugaut.
\newblock Exit problem of McKean-Vlasov diffusion in double-wells landscape.
\newblock {\em J. Theor. Probab.}, 31 (2018), 1013--1023.


\bibitem[T19]{Alea}
J.~Tugaut.
\newblock A simple proof of a Kramers'~type law for self-stabilizing diffusions in double-wells landscape.
\newblock {\em ALEA, Lat. Am. J. Probab. Math. Stat.}, 16 (2019), 389--398.


\bibitem[T20]{ESAIM_particles}
J.~Tugaut.
\newblock Exit-time of mean-field particles system.
\newblock ESAIM: Probability and Statistics, 24 (2020), 399--407.



\bibitem[T21]{Kinetic}
J.~Tugaut.
\newblock Captivity of the solution to the granular media equation.
\newblock Kinetic and Related Models, 14 (2021), no. 2,  199--209.

\end{thebibliography}
\end{document}